\documentclass[11pt,letterpaper]{amsart}
\usepackage[usenames,dvipsnames]{color}
\usepackage{amsthm,amsfonts,amssymb,amsmath,amsxtra}
\usepackage{mathrsfs}
\usepackage{mathtools}
\usepackage{enumitem}
\usepackage[pdfpagelabels,pdftex,hidelinks]{hyperref}
\usepackage{cleveref}
\usepackage[all]{xy}
\usepackage{comment}
\usepackage{tikz-cd}

\hypersetup{
  colorlinks=true,
  urlcolor=blue,
  linkcolor=blue,
  citecolor=blue,
  breaklinks=true,
  bookmarksopen=true,
  bookmarksnumbered=true,
  pdfpagemode=UseOutlines,
  plainpages=false
}

\def\brk{{\breve k}}

\def\g{\gamma}
\def\a{\alpha}
\def\der{{\rm der}}

\newcommand{\ov}{\overline}

\newtheorem{theorem}{Theorem}
\newtheorem{proposition}[theorem]{Proposition}
\newtheorem{lemma}[theorem]{Lemma}
\newtheorem{corollary}[theorem]{Corollary}
\theoremstyle{definition}
\newtheorem{definition}[theorem]{Definition}
\newtheorem{remark}[theorem]{Remark}

\numberwithin{equation}{section}
\numberwithin{theorem}{section}
\setlist[itemize]{leftmargin=*,itemsep=\smallskipamount}
\setlist[enumerate]{leftmargin=*,itemsep=\smallskipamount}

\begin{document}

\title[Characteristic Functions of Parahoric Character Sheaves]
{Characteristic Functions of Parahoric Character Sheaves}
\date{}
\begin{abstract}
We establish a Jordan decomposition formula for the characteristic
functions of the character sheaves on parahoric subgroups defined in \cite{INY25}.  For a
sufficiently large $q$, these functions are $(-1)^{\dim G}$
times the corresponding deep level Deligne--Lusztig characters, extending
\cite{Lu90} to positive depth.  We also prove the orthogonality for the
generalized deep level Green functions and the character sheaves functions. Moreover, we obtain an explicit expression of characteristic functions of  simple character sheaves. As an application,
we present a sheaf-theoretic expression to the multiplicity of the
$G^{F^2}$ character restricted to
$G^{F}$.
\end{abstract}

\author{Zhihang Yu}
\address{The Second Affiliated Hospital of Chongqing Medical Univesity, Chongqing 400010, China}
\address{The National Center for Applied Mathematics in Chongqing, Chongqing 401331, China}
\email{yuzhihang@amss.ac.cn}

\maketitle
\section{Introduction}
Character sheaves form an important tool in the representation theory of finite reductive groups of Lie type. They were introduced and studied by Lusztig in \cite{Lusztig_85I} and a series of subsequent works, with the aim of providing a geometric framework for understanding the almost characters of finite reductive groups. In \cite{Lu90}, Lusztig proved the Frobenius trace of these character sheaves agrees with the corresponding Deligne–Lusztig character for $q$ good enough. This provides an important geometric approach to the study of character tables of finite reductive groups (for example \cite{shoji95}).

Later, Lusztig \cite{Lusztig_17} proposed an analogue of character sheaves for reductive groups over finite local rings. More recently, Bezrukavnikov and Chan \cite{BC24} developed a general theory of character sheaves on parahoric subgroups. In the generic setting, they proved that the characteristic functions of these character sheaves realize the deep level Deligne--Lusztig characters. More recently, Ivanov, Nie and Yu \cite{INY25} gave an alternative construction of these character sheaves in the framework of Yu's types and proved the correspondence in regular setting. We refer to \cite{Lusztig_04,CI_MPDL, ChenS_17, ChanO_21, CS23, Chan_siDL, Chan24, IvanovNie_24,IvanovNie_25, CO25, Yu26,IN26} and references therein for this topic.

A further motivation comes from the search for a geometric theory of
representations of $p$-adic groups. Deep level Deligne--Lusztig varieties
provide geometric realizations of supercuspidal representations in the
settings studied in \cite{Nie_24,CO25}. This raises the question of how
these realizations might fit into a theory of character sheaves on
$p$-adic groups.
Our comparison theorem connects character sheaves on parahoric subgroups
to the geometric constructions of supercuspidal representations.
This may provide
a step toward understanding what a theory of character sheaves for
$p$-adic groups could be.

The goal of the present article is to generalize Lusztig's comparison
result \cite{Lu90} to character sheaves on
parahoric subgroups.  This also extends \cite[Theorem C]{BC24} and
\cite[Theorem 1.4]{INY25} from regular characters to arbitrary characters.
We also establish orthogonality for generalized deep level Green functions
and show that simple character sheaves provide an orthonormal
basis of uniform characters. Moreover, we obtained an explicit expression of  characteristic functions of  simple character sheaves. Finally, we apply these results to subfield
symmetric multiplicities \cite{LusztigF2,Shoji07}
from $G^{F^2}$ to $G^F$

\subsection{Notation}
Let $k$ be a non-archimedean local field with residue field
$\mathbb{F}_q$ of characteristic $p$, throughout this paper, we suppose that $p$ is not bad \cite[\S 1]{Yu26}. Let $\brk$ be the completion of a
maximal unramified extension of $k$, and let $\mathcal{O}_{\brk}$ be its
ring of integers. Let $F$ be the Frobenius automorphism of $\brk$ over
$k$. Let $\mathbb{G}$ be a $k$-rational reductive group that splits over
$\brk$, and let $\mathbb T\subseteq\mathbb B\subseteq\mathbb G$ be a
$k$-rational maximal torus contained in a Borel subgroup.

Let $\mathcal{G}=\mathcal{G}_{{\bf x},0}$ be the parahoric model of
$\mathbb G$ attached to a point ${\bf x}$ in the apartment of $\mathbb T$
in the Bruhat--Tits building of $\mathbb G$ over $k$. For
$r\in\widetilde{\mathbb{R}}_{\geq0}$, where
$\widetilde{\mathbb{R}}:=\mathbb{R}\sqcup\{s+:s\in\mathbb{R}\}$, let
$\mathcal{G}_{{\bf x},r}$ be the $r$th Moy--Prasad subgroup of
$\mathcal{G}_{{\bf x},0}$. For $0\leq s\leq r$ in
$\widetilde{\mathbb{R}}$, we regard
\[
G^s_r=\mathcal{G}_{\mathbf{x},s}(\mathcal{O}_{\brk})/
      \mathcal{G}_{\mathbf{x},r+}(\mathcal{O}_{\brk})
\]
as an $\mathbb{F}_q$-rational perfectly smooth affine group scheme. Let
$\mathbb{H}\subseteq\mathbb{G}$ be a closed $\brk$-rational subgroup.
We denote by $H_r^s\subseteq G_r^s$ the closed subgroup defined in
\cite[\S 2.6]{CI_MPDL}.

Throughout the paper, an algebraic group means a linear algebraic
group.  For an element \(y\), we write \(y_s\) and \(y_u\) for its
semisimple and unipotent Jordan components, respectively.
\subsection{Character sheaves}
Consider 
\begin{equation*}
  \widetilde G_{r,{\rm vreg}} := \{(y,xT_r) \in G_{r, {\rm vreg}} \times G_r/T_r : x^{-1} y x \in T_r\}.
\end{equation*} 
where $G_{r,{\rm vreg}}$ denotes the set of very regular elements in
$G_r$.  Set $T_{r,{\rm vreg}}:=T_r\cap G_{r,{\rm vreg}}$ and consider
the maps
\begin{equation*}
\xymatrix@R=10pt@C=15pt{
    & \widetilde G_{r,{\rm vreg}} \ar@<2pt>[dl]_{\eta_{\rm{vreg}}} \ar@<2pt>[dr]^{\pi_{{\rm vreg}}} \\
    T_{r, {\rm vreg}} & & G_{r,{\rm vreg}}}
\end{equation*}
given by
\begin{equation*}
  \eta_{{\rm vreg}}(y,xT_r) = x^{-1} y x, \qquad \pi_{{\rm vreg}}(y,xT_r) = y.
\end{equation*}
For any local system $\mathcal{L}$ on $T_r$, these maps define a perverse
sheaf $\mathfrak{Ind}^{G_r}_{T_r}\mathcal{L}$ on $G_r$:
\[
\mathfrak{Ind}^{G_r}_{T_r}\mathcal{L}
=(j_{\mathrm{vreg}})_{!*}
 \bigl((\pi_{\mathrm{vreg}})_!\eta_{\mathrm{vreg}}^*
 \mathcal{L}_{\mathrm{vreg}}[\dim G_r]\bigr),
\]
where $j_{{\rm vreg}}:G_{r,{\rm vreg}}\hookrightarrow G_r$ is the
inclusion and $\mathcal{L}_{\mathrm{vreg}}$ is the restriction of
$\mathcal{L}$ to $T_{r,\mathrm{vreg}}$.
\subsection{Generalized deep level Green functions and Deligne--Lusztig characters}
Let $\ell \neq p$ be a prime. Let $\phi: T^F_r \to \ov {\mathbb{Q}}_\ell^\times$ be a character of depth $r\geq 0$ and let $\mathcal{L}_\phi$ denote the local system on $T_r$ attached to $\phi$.

We define the generalized deep level Green function
\[
 \mathcal{Q}_{T,G}^{\mathcal L+}:
 \{\text{unipotent elements of }G^F\}
 \longrightarrow\overline{\mathbb Q}_\ell
\]
by
\[
 \mathcal{Q}_{T,G}^{\mathcal L+}(u)
 :=\chi_{\mathfrak{Ind}^{G}_{T}\mathcal L}(u).
\]

\begin{theorem}[Jordan decomposition for characteristic functions]
Let $s\in G_r^F$ be semisimple, $H=Z_{G_r}^0(s)$, and let
$u\in H^F$ be unipotent.  For
$x\in G_r^F$ satisfying $x^{-1}sx\in T_r$, set
$T_x=xT_rx^{-1}\subset H$ and let $\mathcal L_{\phi,x}$ be the pullback
of $\mathcal L_\phi$ under $T_x\to T_r$, $t\mapsto x^{-1}tx$.  Then
\[
 \chi_{\mathfrak{Ind}^{G_r}_{T_r}\mathcal L_\phi}(su)
 =\frac{1}{|H^F|}
  \sum_{\substack{x\in G_r^F\\x^{-1}sx\in T_r}}
  \mathcal{Q}_{T_x,H}^{\mathcal L_{\phi,x}+}(u)\,
  \chi_{\mathcal L_\phi}(x^{-1}sx).
\]
\end{theorem}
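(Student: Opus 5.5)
We follow Lusztig's strategy in \cite{Lu90} for the classical character-formula of induced complexes, transported to the level-$r$ setting. The key input is a local description of $\mathfrak{Ind}^{G_r}_{T_r}\mathcal L_\phi$ near $su$. We first write the induced complex as the intermediate extension of $(\pi_{\rm vreg})_!\eta_{\rm vreg}^*\mathcal L$. Since $\pi_{\rm vreg}$ is a finite étale Galois covering with group $W_{G_r}(T_r)$, the argument of \cite{Lusztig_85I} (smallness of the resolution $\pi:\widetilde G_r\to G_r$ with $\widetilde G_r=\{(y,xB_r): x^{-1}yx\in B_r\}$) identifies it, up to shift, with $\pi_!\bar\eta^*\mathcal L$. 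Here $\bar\eta(y,xB_r)$ is the image of $x^{-1}yx$ in $T_r=B_r/U_r$. We expect to use the results of \cite{BC24,INY25} that give this identification in the parahoric setting. Granting it, the Grothendieck--Lefschetz trace formula gives
\[
\chi_{\mathfrak{Ind}^{G_r}_{T_r}\mathcal L_\phi}(g)=\pm\sum_i(-1)^i\operatorname{Tr}\bigl(F,H^i_c(\pi^{-1}(g),\bar\eta^*\mathcal L_\phi)\bigr),
\]
so it suffices to compute the fibre $Y_g:=\{xB_r: x^{-1}gx\in B_r\}$ at $g=su$ together with its local system.

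Next we decompose $Y_{su}$ according to the semisimple part. If $x^{-1}sux\in B_r$, then $x^{-1}sx\in B_r$, and we want a canonical $T_r$-conjugate. We claim that the set of $xB_r$ with $x^{-1}sx\in B_r$ is a disjoint union, indexed by $H$-orbits of pairs $x\in G_r$ with $x^{-1}sx\in T_r$ (modulo $T_r$), of pieces isomorphic to $H\times^{H\cap xB_rx^{-1}}$-type varieties. Concretely, the piece is $\{hxB_r : h\in H\}$, which is isomorphic to the flag variety of $H$ relative to the Borel $H\cap {}^xB_r$. Intersecting with the condition on $u$ yields exactly the Springer-type fibre of $u$ for $(H,T_x)$. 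On this piece, since $s$ is central in $H$, the pulled-back local system $\bar\eta^*\mathcal L$ factors as the constant value $\mathcal L_\phi$ at $x^{-1}sx$ tensored with the corresponding local system for $u$ on $H$. Taking $F$-fixed points, the $F$-stable pieces are indexed by $F$-stable classes. Each class corresponds to $|H^F|^{-1}\cdot|T_r^F|^{-1}$ times a sum over $x\in G_r^F$, and after normalising $\mathfrak{Ind}$ this accounts for the factor $1/|H^F|$. Thus the trace on the piece labelled by $x$ equals $\chi_{\mathcal L_\phi}(x^{-1}sx)$ times the trace computing $\chi_{\mathfrak{Ind}^H_{T_x}\mathcal L_{\phi,x}}(u)=\mathcal Q^{\mathcal L_{\phi,x}+}_{T_x,H}(u)$.

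The main obstacle is the first step in positive depth. For $r>0$, $G_r$ is not reductive, and $B_r$ is replaced by $T_rU_r$ together with the positive-depth part, so it is not clear that $\pi$ is small, nor that $\mathfrak{Ind}$ equals the full pushforward. Here we would use the paper's hypothesis ($p$ not bad) and the Jordan decomposition $G_r\ni y=y_sy_u$. The reduction is to $H=Z^0_{G_r}(s)$, and we handle it by the analogue of the Lusztig/Bezrukavnikov--Chan statement that the pushforward along $\widetilde G_r\to G_r$ is already perverse and intermediate. If this fails in general, we instead compute stalks of the $!*$-extension through a transversal slice at $s$: by an étale-local isomorphism $G_r\times^H(sH_{\rm unip})\to G_r$ near $su$, restriction of $\mathfrak{Ind}^{G_r}_{T_r}\mathcal L$ to this slice is a sum over $W$-cosets of $\mathfrak{Ind}^{H}_{T_x}\mathcal L_{\phi,x}$. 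Since intermediate extension commutes with smooth pullback, this gives the formula directly on stalks, and taking Frobenius traces yields the stated identity.
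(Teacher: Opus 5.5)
\textbf{The gap: your first step is false in the generality of the statement.} The overall shape of your argument is right: the stalk at $su$ is computed by compactly supported cohomology of a fibre, the fibre splits into $H$-orbit pieces, and each piece is the corresponding fibre for $H$, twisted by the value of $\mathcal L$ at $x^{-1}sx$. But everything else rests on the first step.

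For $r>0$, the Borel-type map $\{(y,xB_r):x^{-1}yx\in B_r\}\to G_r$, with $\bar\eta^*\mathcal L$ pulled back from $T_r=B_r/U_r$, does not compute $\mathfrak{Ind}^{G_r}_{T_r}\mathcal L$ unless $\mathcal L$ is generic. Here is a counterexample.
\begin{itemize}
\item Take $\operatorname{GL}_2$, $r=1$, and $\phi=\chi\circ\det$ with $\chi$ of depth one.
\item Consider the elements whose reduction is a scalar times a regular unipotent and which lie in a conjugate of $B_1$. They form a $6$-dimensional locus, of codimension $2$.
\item Over this locus your fibre is an $\mathbb A^1$. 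Along it, $\bar\eta$ moves in the coroot direction of $T^{0+}_1$, where $\mathcal L$ is trivial.
\item So $H^2_c(\mathbb A^1)\neq0$ gives, after the shift $[\dim G_1]=[8]$, a nonzero $\mathcal H^{-6}$ on this $6$-dimensional stratum. This violates the support condition for an intermediate extension, so the pushforward has an extra summand supported off $G_{r,\mathrm{vreg}}$.
\end{itemize}
This is why the Borel-type identification is only available for generic $\mathcal L$ \cite{BC24}. It is also why \cite{INY25} (recalled in \Cref{iw}) uses instead the Yu-type Iwahori $I=\mathcal I_{\phi,U,r}$ attached to a Howe factorization, with $\bar T=T_r/T_{\phi,r}$ and $\beta(y,xI)=p_I(x^{-1}yx)$. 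The references you plan to cite therefore do not give the identification you state.

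Your fallback does not repair this. The set $sH_{\rm unip}$ is not open in $H$, so $G_r\times^H sH_{\rm unip}\to G_r$ lands in the non-open stratum of elements whose semisimple part is conjugate to $s$. It is not \'etale onto a neighborhood of $su$, and restricting an intermediate extension to such a stratum does not give an intermediate extension. A slice argument would need two things you have not supplied:
\begin{itemize}
\item an open, $H$-stable, $F$-stable $\mathcal U\supseteq H_{\rm unip}$ with $Z_G(sg_s)\subseteq Z_G(s)$ for $g\in\mathcal U$ (this is what \Cref{U} and \Cref{sep} construct);
\item an identification over the very regular part.
\end{itemize}

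\textbf{What the corrected argument requires.} If you replace $(B_r,T_r)$ by $(I,\bar T)$ and apply \Cref{iw} to both $G$ and $H$, your stalk-level decomposition does go through. It is in fact slightly leaner than the paper's argument over $s\mathcal U$: since $I$ is closed under Jordan decomposition, $\psi^{-1}(su)\subseteq(G/I)^s$ automatically, so no neighborhood is needed. You must still supply the steps your sketch glosses over:
\begin{itemize}
\item $(G/I)^s$ is a finite disjoint union of open and closed $H$-orbits (\Cref{Rich}).
\item These orbits are indexed by $H\backslash\mathcal M/T$ (\Cref{lem4.2}). This uses that semisimple elements of the connected solvable group $I$ are $I$-conjugate into $T$ and have connected centralizers.
\item $I_{\mathcal O}=H\cap x_{\mathcal O}Ix_{\mathcal O}^{-1}$ is the Yu-type Iwahori for $(H,T_{\mathcal O})$ with the restricted Howe datum, so \Cref{iw} applies to $H$.
\item $p_I$ is a homomorphism, so on each piece $\beta=\bar s_{\mathcal O}\,\bar c_{\mathcal O}(\beta_{\mathcal O})$. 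This produces the constant twist by $\mathcal L_{s_{\mathcal O}}$.
\end{itemize}

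\textbf{Two smaller corrections.} Your count is off. $H^F\times T^F$ acts on $\mathcal M^F$ with stabilizers $\{(xtx^{-1},t)\}\cong T^F$, so every orbit has exactly $|H^F|$ elements. Lang's theorem for the connected stabilizer $T$ then identifies $H^F\backslash\mathcal M^F/T^F$ with $\Gamma^F$. There is no factor $|T_r^F|^{-1}$ to absorb.

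The $\pm$ in your trace formula also has to be pinned down. The shifts $[\dim G]$ and $[\dim H]$ give the same sign because $\dim G-\dim H$ is even.
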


Recall from \cite{CI_MPDL,Lusztig_04} that, for the Moy--Prasad quotients $G_r$
with $r\geq0$, there are analogues of the classical Deligne--Lusztig
varieties, called deep level Deligne--Lusztig varieties.  The variety
$X_r=X_{T,U,r}$ is essentially the inverse image of $U_r$ under the Lang
map $G_r\to G_r$.  Attached to $\phi$ is the virtual $G_r^F$-module
\[R_{T_r}^{G_r}(\phi) := \sum_i (-1)^i H_c^i(X_r, \ov {\mathbb{Q}}_\ell)[\phi]
\]
which we may also regard as a (virtual) character of $G_r^F$.

In \cite{Nie_24}, the author defined another virtual $G_r^F$-module attached to $\phi$, by a similar Deligne-Lusztig type variety $Z_r$:
\[\mathcal{R}_{T_r}^{G_r}(\phi) := \sum_i (-1)^i H_c^i(Z_r, \ov {\mathbb{Q}}_\ell)[\phi]
\]
This construction comes from a fixed "Howe factorization"\cite{Kaletha_19} for $\phi$. We have
\begin{theorem}[\Cref{LC}]
Assume that $q$ is sufficiently large. Then
\[\chi_{\mathfrak{Ind}^{G_r}_{T_r}\mathcal L_\phi}
 =(-1)^{\dim G_r}\mathcal R^{G_r}_{T_r}(\phi).
\]
\end{theorem}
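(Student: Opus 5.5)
We prove the identity $\chi_{\mathfrak{Ind}^{G_r}_{T_r}\mathcal L_\phi}=(-1)^{\dim G_r}\mathcal R^{G_r}_{T_r}(\phi)$ by comparing both sides through their Jordan decompositions. By the Jordan decomposition theorem for characteristic functions stated above, for $s$ semisimple and $u\in Z^0_{G_r}(s)^F$ unipotent, the left side at $su$ equals $\frac{1}{|H^F|}\sum_x \mathcal Q^{\mathcal L_{\phi,x}+}_{T_x,H}(u)\,\phi(x^{-1}sx)$. Here we use $\chi_{\mathcal L_\phi}=\phi$ up to the normalization of the sheaf–function dictionary. On the representation side, Nie's character formula for $\mathcal R^{G_r}_{T_r}(\phi)$ from \cite{Nie_24} has the same shape. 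It is obtained from the Deligne–Lusztig fixed-point formula applied to $Z_r$ with the action of $(su,t)$, and reads
\[
\mathcal R^{G_r}_{T_r}(\phi)(su)=\frac{1}{|H^F|}\sum_{\substack{x\in G_r^F\\ x^{-1}sx\in T_r}} Q^{H}_{T_x}(\phi_x;u)\,\phi(x^{-1}sx).
\]
In this formula $Q^H_{T_x}(\phi_x;\cdot)$ is a deep level Green-type function on the unipotent elements of $H^F$. In the positive-depth case it depends on the restriction of $\phi$ to the positive-depth part via the Howe factorization.

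It therefore suffices to prove the unipotent statement for each $H$:
\[
\mathcal Q^{\mathcal L+}_{T,H}(u)=(-1)^{\dim H}\,Q^H_T(\phi;u)\quad\text{for $q$ large.}
\]
Here we use that the sign $(-1)^{\dim G_r}$ matches $(-1)^{\dim H}$, since $\dim G_r-\dim H$ is even: it is the number of roots outside $H$ at each level, and roots come in pairs $\pm\alpha$. We reduce this unipotent identity in two stages.

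First, following the Howe factorization $\phi=\prod\phi_i$ and the construction of $Z_r$, we write the generalized Green function of $H$ at depth $r$ as a twisted Green function of the smaller group $H'=Z_H(\phi_{>0})$ at depth $0$. On the sheaf side this is done by restricting $\mathfrak{Ind}$ to the unipotent variety. There the positive-depth part of $\mathcal L_\phi$ is a multiplicative local system pulled back from a character of the Lie-algebra quotient. A Fourier/stationary-phase argument on the unipotent radical of $G_r\to G_0$ localizes the stalks to $H'$, in the manner of \cite{BC24,INY25}. On the representation side this reduction is built into the definition of $Z_r$ \cite{Nie_24}.

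Second, the depth-zero case is exactly Lusztig's theorem \cite{Lu90}: for $q$ large, the characteristic function of a character sheaf restricted to unipotents agrees, up to the sign $(-1)^{\dim}$, with the Deligne–Lusztig Green function.

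The main obstacle is the first stage. We must show that the intersection-cohomology extension $j_{!*}$ from the very regular locus commutes with the restriction to unipotent elements and with the localization to $H'$. This means showing that no extra constituents supported on smaller strata appear. For this we would use the cleanness results of \cite{INY25} together with the orthogonality of generalized deep level Green functions: both families satisfy the same orthogonality relations and agree on regular unipotents. Combined with a large-$q$ argument (nonvanishing of the Gram determinant), this forces equality. The hypothesis that $q$ is large enters exactly there, just as it does in \cite{Lu90}.
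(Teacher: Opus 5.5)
Your reduction to unipotent elements matches the paper's. The Jordan decomposition (\Cref{JD}) on the sheaf side and the corresponding character formula on the representation side reduce everything to $\mathcal Q^{\mathcal L+}_{T,H}(u)=(-1)^{\dim H}Q^{\phi+}_{T,H}(u)$ for each $H=Z^0_{G_r}(s)$.

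The gap is in how you then prove this identity. Your ``first stage'' is only asserted. Localizing the depth-$r$ generalized Green function to a depth-zero Green function on $H'$ amounts to a sheaf-theoretic counterpart of Nie's formula $\mathcal R^{G_r}_{T_r}(\phi)=\operatorname{Ind}_{\mathcal K_{\phi,r}^F}^{G_r^F}\bigl(\kappa_\phi\otimes\operatorname{Inf}R^{(G^0)_0}_{T_0}(\phi_{-1})\bigr)$. That is, you need to identify the characteristic function of $\Psi_+(\mathcal K_0)$ with the $\kappa_\phi$-twisted induction of $\chi_{\mathcal K_0}$. In the paper this identity is a consequence of \Cref{LC} (the corollary after \Cref{fourier-prop}), not an input.

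The mechanism you propose for the ``main obstacle'' does not supply it either. First, the orthogonality of generalized deep level Green functions (\Cref{orth-green}) is deduced from \Cref{orth-ind}, whose integrality step uses \Cref{LC} itself. Invoking it here is circular. Second, suppose both sides did satisfy identical orthogonality relations. Equal Gram matrices plus agreement on regular unipotent elements still do not force two families of functions to coincide: they can differ by an isometry of their span. A nonvanishing Gram determinant only gives linear independence.

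The missing idea is that no reduction to depth zero is needed, because both unipotent restrictions depend only on $\phi|_{T^{0+}}$.
On the sheaf side this is \Cref{Qpos}, which follows at once from the Yu-type model $\mathfrak{Ind}^{G}_{T}\mathcal L\cong\psi_!\beta^*\bar{\mathcal L}[\dim G]$ of \Cref{iw}. Since this is a compactly supported pushforward over all of $G$, the stalk at a unipotent $u$ is $R\Gamma_c(\psi^{-1}(u),\beta^*\bar{\mathcal L})[\dim G]$, and $\beta(\psi^{-1}(u))\subseteq\bar T^{0+}$. This also removes your worry about $j_{!*}$ producing extra constituents on smaller strata.
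On the representation side it is \cite[Theorem 3.1]{LN26}.

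For $q$ large you then choose a regular $\phi^\dagger$ with the same restriction to $T^{0+}$. You alter only the depth-zero factor $\phi_{-1}$ of the Howe factorization, so $I$ is unchanged (\Cref{changehowe}). Then apply the known regular comparison \cite[Theorem 8.6]{INY25}. This is where ``$q$ sufficiently large'' enters, namely for the existence of $\phi^\dagger$, and \cite{Lu90} is not needed as an input.

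A minor point: your justification that $\dim G_r-\dim H$ is even fails at non-special $\mathbf x$. For $\mathrm{SL}_2$ with $\alpha(\mathbf x)=0.3$, $r=0.5$ and generic $s\in T_r^F$, one gets $\dim G_r=2$ and $\dim H=1$. The final sign $(-1)^{\dim G_r}$ still comes out right, but only because the shifts in \eqref{glob} actually produce a factor $(-1)^{\dim G_r-\dim H}$ in the Jordan decomposition. You must carry that factor along rather than drop it.
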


If $r=0$ and ${\bf x}$ is hyperspecial, and $\phi$ is of depth $0$. Then $G_{0,{\rm vreg}}$ is exactly the regular semisimple locus in $G_0$ (as noted in \cite[Proposition 5.4]{CI_MPDL}), hence the theorem above can be seen as a generalization of comparison theorem in \cite{Lu90}.

\begin{theorem}\cite[Theorem]{Nie_24}
    When $\mathbb{T}$ is elliptic, then 
    \[\mathcal{R}_{T_r}^{G_r}(\phi)=R_{T_r}^{G_r}(\phi)\]
\end{theorem}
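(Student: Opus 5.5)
The statement to prove is the last displayed theorem, \cite[Theorem]{Nie_24}: for $\mathbb T$ elliptic, $\mathcal R^{G_r}_{T_r}(\phi)=R^{G_r}_{T_r}(\phi)$. My plan is to compare the two varieties $X_r$ and $Z_r$ directly. Fix the Howe factorization $\phi=\prod_{i}\phi_i|_{T_r^F}$ attached to a twisted Levi sequence $\mathbb T\subseteq \mathbb G^0\subsetneq\mathbb G^1\subsetneq\cdots\subsetneq \mathbb G^d=\mathbb G$ with depths $r_0<r_1<\cdots$. The variety $Z_r$ is built from the Lang preimage of a product of unipotent pieces. This product is adapted to the filtration: $U^0_r\cdot (U^1\cap \cdots)_{r_0+}\cdots$, i.e.\ roots of $G^i$ not in $G^{i-1}$ contribute only in depths $> r_{i-1}$. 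By contrast, $X_r$ uses the full $U_r$.

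First, by induction on $d$, I reduce to a single step $G^{i-1}\subset G^i$. At each step I realize both cohomologies as an iterated induction. I use the standard fibration $X_r\to X_{r'}$ over smaller depth, whose fibers are affine bundles twisted by Lang torsors. The claim is that integrating the $\phi$-isotypic part along the fibers for roots outside $G^{i-1}$ in depths $\le r_{i-1}$ contributes nothing beyond a Tate twist and shift. Concretely, I would write $X_r$ as a union of locally closed pieces indexed by the Bruhat-type decomposition relative to $G^{i-1}_r$. On every piece except the "diagonal" one, the $\phi_{i}$-part vanishes. The mechanism is that $\phi_i$ is $G^{i}$-generic of depth $r_{i-1}$, so a non-trivial $\mathbb G_a$-action on the fiber forces the twisted character sum to be zero. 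This is the Lusztig/Chan--Ivanov cancellation argument, now applied to an intermediate-depth generic character instead of a fully regular one.

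Ellipticity is used to guarantee that the Weyl-group-type strata in this decomposition are controlled: for elliptic $T$, the relevant relative Weyl group action has no fixed non-diagonal contributions. Equivalently, the Mackey-type formula for $\langle R(\phi),R(\phi')\rangle$ holds without a boundary term, so both sides have the same inner products with all $R_{T'_r}^{G_r}(\theta)$. Combining this with the identical character values on very regular elements should pin the two virtual characters down. As a backup, I would compare characters on all of $G_r^F$ via the Deligne--Lusztig character formula, i.e.\ the Jordan decomposition analogous to the first theorem above, reducing to Green functions of $Z_{G_r}^0(s)$ where the induction hypothesis applies.

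The main obstacle is the vanishing on non-diagonal strata at depth exactly $r_{i-1}$ for a non-regular $\phi$. Genericity of $\phi_i$ only controls the quotient $G^i_{r_{i-1}}/G^i_{r_{i-1}+}$. The difficulty is to verify that the twisted action on these strata is genuinely through a nontrivial additive character, and here I would use $p$ not bad together with ellipticity.
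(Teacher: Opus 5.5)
The paper does not prove this statement. It is quoted from \cite{Nie_24} and used only as an input, to pass from \Cref{LC} to the Corollary. So your sketch must stand on its own, and it does not. The step you call the main obstacle is the entire content of the theorem, and the setup around it is off. The variety $Z_r$ is attached to $\mathcal I_{\phi,U,r}=(\mathcal K_{\phi,r}\cap U_r)T_r(\mathcal K^+_{\phi,r}\cap\ov U_r)$. For a root $\alpha$ of $\mathbb G^i$ not in $\mathbb G^{i-1}$, this keeps the $U$-side in depths $\geq s_{i-1}=r_{i-1}/2$ and the $\ov U$-side in depths $>s_{i-1}$. It is not a product of $U$-pieces in depths $>r_{i-1}$, as you describe it.

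The half-depth cut is forced by the mechanism you invoke:
\begin{itemize}
\item The generic factor for the step $\mathbb G^{i-1}\subset\mathbb G^i$ is $\phi_{i-1}$, of depth $r_{i-1}$, not $\phi_i$.
\item It is felt through commutators of $U_\alpha$ in depth $t$ with $U_{-\alpha}$ in depth $r_{i-1}-t$. These land in $\alpha^\vee$ in depth $r_{i-1}$, so any cancellation pairs $U$-layers against $\ov U$-layers.
\item At $t=s_{i-1}$ the layer pairs with itself and can only be halved. The surviving Lagrangian half produces the Heisenberg--Weil factor $\kappa_\phi$ in $\mathcal R^{G}_{T}(\phi)=\operatorname{Ind}_{K^F}^{G^F}\bigl(\kappa_\phi\otimes\operatorname{Inf}R^{H_0}_{T_0}(\phi_{-1})\bigr)$, where $K=\mathcal K_{\phi,r}$ and $H_0=(G^0)_0$.
\end{itemize}
A stratification of $X_r$ by $G^{i-1}_r$-Bruhat cells, with a single $\mathbb G_a$-action killing each non-diagonal cell, does not see this pairing. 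So even granting your vanishing, nothing shows that the surviving stratum has the $\phi$-isotypic cohomology of $Z_r$. The role of ellipticity (``no fixed non-diagonal contributions'') is likewise asserted, not derived.

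Your fallbacks do not close the gap either. Suppose you had equal pairings with all $R^{G_r}_{T'_r}(\theta)$, which your unproved ``Mackey formula without boundary term'' is meant to supply. Together with agreement on $G^F_{r,\mathrm{vreg}}$, this only shows that $R-\mathcal R$ is orthogonal to the uniform functions and vanishes on very regular elements. At positive depth the uniform functions do not span the class functions (compare the remark on \cite{Stansinski} in \S\ref{endalg}). So this forces nothing unless you already know $\mathcal R$ is uniform. The norm argument $\langle R-\mathcal R,R-\mathcal R\rangle=0$ would suffice. However, it needs the mixed pairing $\langle R,\mathcal R\rangle$, that is, an analysis of $G_r^F\backslash(X_r\times Z_r)$, which is the same cancellation problem again.

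The Jordan-decomposition backup is circular. For central $s$ one has $Z^0_{G_r}(s)=G_r$, so the Green functions of $G_r$ itself are never reached by the induction hypothesis. Those Green functions depend on $\phi|_{T^{0+}}$, hence on all the positive-depth Howe factors. Neither induction on $\dim G$ nor induction on the length of the factorization reduces them. Comparing exactly these unipotent values is what has to be proved.
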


\begin{corollary}
    Assume that $q$ is sufficiently large. Then 
\[\chi_{\mathfrak{Ind}^{G_r}_{T_r}\mathcal L_\phi}
 =(-1)^{\dim G_r} R^{G_r}_{T_r}(\phi).
\]
\end{corollary}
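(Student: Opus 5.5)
The corollary follows directly from the two preceding theorems, so I only need to chain them. First, by \Cref{LC} (the comparison theorem stated above), for $q$ sufficiently large we have
\[
\chi_{\mathfrak{Ind}^{G_r}_{T_r}\mathcal L_\phi}=(-1)^{\dim G_r}\mathcal R^{G_r}_{T_r}(\phi).
\]
Second, by \cite[Theorem]{Nie_24}, when $\mathbb T$ is elliptic the virtual modules $\mathcal R^{G_r}_{T_r}(\phi)$ and $R^{G_r}_{T_r}(\phi)$ coincide. Substituting the second identity into the first gives the stated formula.

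There are two points I would verify. The first is the ellipticity hypothesis: the corollary as printed does not repeat it, so I would read it as holding under the standing assumption that $\mathbb T$ is elliptic, as in the preceding theorem. Without this assumption the statement should be restricted to that case, or one would need an independent comparison of $Z_r$ and $X_r$.

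The second point is that the identification in \cite{Nie_24} must be independent of the auxiliary Howe factorization used to define $Z_r$. This is needed so that the right-hand side of \Cref{LC}, which a priori depends on that choice, matches the canonical $R^{G_r}_{T_r}(\phi)$. I expect this to be the only subtle step, and it is handled by citing the theorem of \cite{Nie_24} for every Howe factorization.

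Finally, the largeness condition on $q$ is inherited unchanged from \Cref{LC}, since the second step holds for all $q$.
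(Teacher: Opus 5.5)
Your argument is correct and is how the paper obtains the corollary: it is stated right after \Cref{LC} and the theorem of \cite{Nie_24}, and follows by substituting $\mathcal R^{G_r}_{T_r}(\phi)=R^{G_r}_{T_r}(\phi)$ into \Cref{LC}, with the ellipticity of $\mathbb T$ tacitly carried over from the preceding theorem, as you observe. Your worry about the Howe factorization is harmless, since both results are applied to the same fixed factorization used to define $Z_r$ (and $I$), so no independence statement is needed.
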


\subsection{Orthogonality and Fourier
expansions }
In section 6, we first obtain the scalar products of
induced functions and generalized deep level Green functions, and then pass
to simple summands. Let
\[
 \mathcal K=\mathfrak{Ind}_{T_r}^{G_r}\mathcal L_\phi,
 \qquad
 W_{\mathcal L_\phi}
 =\operatorname{Stab}_{N_{G_r}(T_r)/T_r}(\mathcal L_\phi).
\]
By \cite[\S5]{INY25}, \(\mathcal K\) is semisimple perverse, and its
simple summands are obtained from the corresponding depth-zero summands.
If \(A\) is an
\(F\)-equivariant simple summand of \(\mathcal K\). By deep level comparison theorem, we have \Cref{fourier-prop}
\[
 \chi_A
 =\frac{(-1)^{\dim G_r}}{|W_{\mathcal L_\phi}|}
   \sum_{w\in W_{\mathcal L_\phi}}
   \operatorname{Tr}\bigl((\theta_w\sigma_A)^{-1},V_A\bigr)
   \mathcal R_{T_{r,w}}^{G_r}(\phi_w),
\]
where \(V_A=\operatorname{Hom}(A,\mathcal K)\). 

Let $\mathcal{C}^{\mathrm{UN}}(G_r^F)$ be the span of the deep level Deligne--Lusztig
characters named the space of uniform characters. For sufficiently large $q$,
the functions $\chi_A$, with $A$ ranging over the $F$-stable simple
induced sheaves up to isomorphism, form a basis of $\mathcal{C}^{\mathrm{UN}}$.
With normalized dual Weil structures, \Cref{simple-orth} gives
\[
 \bigl\langle\chi_A,\chi_{\mathbb D_{G_r}^0B}\bigr\rangle_{G_r^F}
 =\delta_{A,B}.
\]

These characteristic functions are also related to the well-understood depth zero almost characters. Let $A_0$ correspond to $A$.
We have
\[
 \chi_A=(-1)^{\dim G_r-\dim G^0_0}
 \operatorname{Ind}_{\mathcal K_{\phi,r}^F}^{G_r^F}
 \bigl(\kappa_\phi\otimes\operatorname{Inf}_{(G^0_0)^F}^{\mathcal K_{\phi,r}^F}\chi_{A_0}\bigr),
\]
where $\kappa_\phi$ is defined in \cite[\S7.1]{Nie_24}, which is closed related to the Weil--Heisenberg representation in Yu's construction \cite{Yu_01}.

\subsection{Subfield symmetric multiplicity}
The distinction problem for a pair $(\mathbb{G},\mathbb{G}^\theta)$ with an involution $\theta$
concerns the space $\operatorname{Hom}_{\mathbb{G}^\theta(k)}(\pi,1)$.
For supercuspidal representations,
Hakim and Murnaghan \cite{HM} reduce this problem to distinction of depth-zero data
with quadratic-character twists. The study of deep level Deligne--Lusztig
representations motivates a geometric study
of symmetric multiplicities at positive depth, see for example \cite{LN26}.

Here we consider the Galois case of this general problem.
More precisely, we apply the comparison and orthogonality results
to the subfield pair $(G_r^{F^2},G_r^F)$.
This pair is the finite-level counterpart of
$(\mathbb G(k_2),\mathbb G(k))$, where $k_2/k$ is the unramified
quadratic extension. For a representation $\rho$ of $G_r^{F^2}$,
the quantity of interest is
\[
 \dim\operatorname{Hom}_{G_r^F}(\rho,1)
 =\dim\rho^{G_r^F}
 =\frac{1}{|G_r^F|}\sum_{g\in G_r^F}\operatorname{Tr}(g,\rho).
\]

When $r=0$, this is the classical subfield problem studied by
Gow, Kawanaka, Prasad and Lusztig (see the introduction to
\cite{LusztigF2}). The same question incorporating
positive depth is closely related to the deep
level Deligne--Lusztig representations on symmetric spaces over finite
rings in \cite{LN26}. Our application supplies a sheaf-theoretic expression for
some multiplicities.

Let
$\phi$ be a character of \(T_r^{F^2}\) and let
\(\mathcal K=\mathfrak{Ind}_{T_r}^{G_r}\mathcal L_\phi\), equipped
with its \(F^2\)-Weil structure.  Set
\[
 m_{T_r,\phi}
 =\left\langle
   \operatorname{Res}_{G_r^F}^{G_r^{F^2}}
      \mathcal R_{T_r}^{G_r,F^2}(\phi),1
  \right\rangle_{G_r^F}.
\]
Under the comparison input above, \Cref{F2-simple,subfield-trace}
evaluate this expression as
\[
 m_{T_r,\phi}
 =\sum_{\substack{A\in\operatorname{Irr}(\mathcal K)^{F^2}\\
                    F^*A\cong\mathbb D_{G_r}^0A}}
           \operatorname{Tr}(\sigma_A,V_A),
 \qquad |m_{T_r,\phi}|\leq |W_{\mathcal L_\phi}|.
\]
In particular, when ${\phi}$ is a regular character, the corresponding irreducible component of deep level Deligne-Lusztig representation has invariant dimension zero if
\(F^*\mathcal K\not\cong\mathbb D\mathcal K\), and one if \(F^*\mathcal K\cong\mathbb D\mathcal K\).

\subsection{Outline}
In Section 2 we recall the construction of deep level character sheaves
used in \cite{INY25}.  In Section 3 we construct a neighborhood $\mathcal{U}$ of the
identity in \(H=Z_G^0(s)\) which separates the semisimple parts of \(s\)
and of nearby elements.  In Section 4 we decompose the incidence
varieties \(X_\mathcal{U}\) and \(Y\) into pieces indexed by
\(H\backslash\mathcal M/T\) and compare them with the corresponding
pieces of the \(s\)-twisted incidence varieties.  In Section 5 we
define generalized deep level Green functions and prove the Jordan
decomposition formula and the comparison theorem. Base on this, In Section 6 we prove the
orthogonality formulas for generalized deep level Green function and orthogonality for the characteristic function of simple induced sheaf. As an application, Section~7 we study the  subfield
symmetric multiplicity from \(G^{F^2}\) to \(G^F\).
\subsection*{Acknowledgments} We would like to thank Sian Nie for helpful discussions and encouragement. We are also grateful to Ben Liu and Pengcheng Li for fruitful discussions.  The arguments are inspired in part by Lusztig's method in \cite{Lus_Ch2}.

\subsection*{AI disclosure}
ChatGPT was used during the preparation of the manuscript as a assistant for checking arguments, improving the exposition and identifying mathematical and typographical errors. The mathematical ideas, conceptual framework, and proof strategies were developed by the authors. All AI suggestions were carefully verified by the authors, who take full responsibility for the manuscript. 

\section{Preliminaries}\label{pre}
Let $p\neq\ell$ be distinct primes, and let $\mathbb F_q$ be a finite
field of cardinality $q$ and characteristic $p$.

\subsection{Additional notation and conventions}\label{2.1}
Let $\phi:\mathbb{T}(k)\to\ov{\mathbb{Q}}_\ell^\times$ be a smooth character
of depth $r\geq0$. Then, by
\cite[Proposition 3.6.7]{Kaletha_19}, $\phi$ admits a Howe factorization
$(\mathbb{G}^i,\phi_i,r_i)_{-1\leq i\leq n}$, where the $\mathbb{G}^i$ form an ascending
sequence of $\breve k$-rational Levi subgroups of $\mathbb{G}$ with
$\mathbb{G}^{-1}=\mathbb{T}$ and $\mathbb{G}^n=\mathbb{G}$, where
$0=r_{-1}<r_0<\dots<r_{n-1}\leq r_n=r$, and where
$\phi_i:\mathbb{G}^i(k)\to\ov{\mathbb{Q}}_\ell^\times$ is a character of depth
$r_i$.
Write $s_i = r_i/2$ and
\[
\mathcal{K}_{\phi,r} = (G^0)_r (G^{1})^{s_0}_{r} \cdots (G^n)_{r}^{s_{n-1}} \quad \text{ and }\mathcal{K}^+_{\phi,r} = (G^0)^{0+}_r (G^{1})^{s_0+}_{r} \cdots (G^n)_{r}^{s_{n-1}+}.
\]
We fix a $\brk$-rational Borel subgroup $\mathbb B$ containing $\mathbb T$ such that each
$\mathbb G^i\subseteq \mathbb G$ is a standard Levi subgroup with respect to $\mathbb B$. Let
$\mathbb U\subseteq \mathbb B$ be the unipotent radical, and let $\ov{\mathbb{U}}$ be the opposite
unipotent subgroup. Consider the Iwahori-like subgroup
\[
 \mathcal{I}_{\phi,U,r}
 = (\mathcal{K}_{\phi,r}\cap U_r)T_r
   (\mathcal{K}_{\phi,r}^+\cap\ov U_r)
\]
constructed in \cite{Nie_24}. Set 
\begin{align*}
&T^i_\der = G^i_\der \cap T,\\
&T_{\phi, r} = (T^0_\der)_{0+:r} (T^1_\der)_{r_0+:r} \cdots (T^n_\der)_{r_{n-1}+:r},\\
&\bar T_r = T_r/T_{\phi,r}.
\end{align*}

Throughout the remainder of the paper, we fix a semisimple element
\(s\in G^F\) and use the notation
\[
 G=G_r,\qquad T=T_r,\qquad H=Z^0_G(s),\qquad W:=N_{G}(T)/T.
\]

We also fix a character $\phi:\mathbb{T}(k)\to\ov{\mathbb Q}_\ell^\times$ and set
\[
  I=\mathcal I_{\phi,U,r},\quad \tilde{T}=T_{\phi, r},
  \qquad \bar T=\bar T_r=T/\tilde{T},\qquad
 p:T\twoheadrightarrow\bar T.
\]
Let $\mathcal{L}=\mathcal{L}_\phi$ and $\bar{\mathcal{L}}$ denote the
local systems on $T$ and $\bar T$, respectively, attached to $\phi$.
Thus $p^*\bar{\mathcal{L}}\cong\mathcal{L}$.
\subsection{Very regular elements and induction functor}\label{tor}
\begin{definition}[{\cite{CI_MPDL}}]
  An element $\gamma\in G_{\mathbf{x},0}$ is called \textit{very regular} if
  \begin{enumerate}
    \item the identity component $\mathbb{T}_\g$ of the centralizer of $\g$ in $\mathbb{G}$ is a maximal torus,
    \item the apartment of  $\mathbb{T}_\g$ contains $\mathbf{x}$,
    \item $\a(\g)\not\equiv1\pmod{\varpi\mathcal{O}_\brk}$ for every root $\a$ of $\mathbb{T}_\g$ in $\mathbb{G}$.
  \end{enumerate}

An element of $G=G_r$ is called very regular if it is the image of a
very regular element of $G_{\mathbf{x},0}$.  The set $G_{{\rm vreg}}$ of
very regular elements is an $F$-stable open dense subset of $G$.
\end{definition}

Consider 
\begin{equation*}
  Y := \{(y,xT) \in G_{ \rm vreg} \times G/T : x^{-1} y x \in T\}.
\end{equation*} 
Set $T_{\mathrm{vreg}}:=T\cap G_{\rm vreg}$ and consider the maps
\begin{equation*}
\xymatrix@R=10pt@C=15pt{
    & Y \ar@<2pt>[dl]_{\eta} \ar@<2pt>[dr]^{\pi} \\
    T_{\rm vreg} & & G_{\rm vreg}}
\end{equation*}
given by
\begin{equation*}
  \eta(y,xT) = x^{-1} y x, \qquad \pi(y,xT) = y.
\end{equation*}
The map $\pi$ is a finite étale $W$-torsor.  This correspondence defines
the intersection cohomology complex
$\mathfrak{Ind}^{G}_{T}\mathcal{L}$ on $G$:
\[
\mathfrak{Ind}^{G}_{T}\mathcal{L}
=(j_{\mathrm{vreg}})_{!*}
 \bigl(\pi_!\eta^*\mathcal{L}_{\mathrm{vreg}}[\dim G]\bigr),
\]
where $j_{{\rm vreg}}:G_{\rm vreg}\hookrightarrow G$ is the inclusion
and $\mathcal{L}_{\mathrm{vreg}}$ is the restriction of $\mathcal{L}$ to
$T_{\mathrm{vreg}}$.  Two other constructions of the functor
$\mathfrak{Ind}^{G}_{T}$ appear in \cite{BC24,INY25}. We call the simple summand of the form $\mathfrak{Ind}^{G}_{T} \mathcal{L}$ as a simple induced sheaf.

\begin{remark}\label{supportH}
We may suppose $s\in T$. The same construction applies with $\mathbb{G}$ replaced by its reductive subgroup
$Z^0_{\mathbb{G}}(\tilde s)$ and $G_{\mathrm{vreg}}$ replaced by
$
H_{\mathrm{vreg}}:=G_{\mathrm{vreg}}\cap H
$, where $\tilde s$ is lifting of $s$ in $\mathbb{G}$ by \cite[Lemma 6.2]{CO_25} such that $(Z^0_{\mathbb{G}}(\tilde s))_r=H$.
The corresponding objects
$\eta$ and $\pi$ are replaced by $\eta_H$ and $\pi_H$, respectively,
and $\pi_H$ remains a finite \'{e}tale $W_H$-torsor. In particular, for
the corresponding induced complex on $H$, one has an isomorphism of
Weil perverse sheaves
$$
\mathfrak{Ind}^{H}_{T}\mathcal{L}
\cong
(j_{\mathrm{vreg}})_{!*}
\bigl(
\pi_{H!}\eta_H^*\mathcal{L}_{\mathrm{vreg}}[\dim H]
\bigr).
$$
\end{remark}
\subsection{Yu type construction}\label{iw-sec}
We recall the construction from \cite{INY25}. Consider
\[
 X=\{(y,xI)\in G\times G/I:x^{-1}yx\in I\}.
\]
 
We have the following commutative diagram
\begin{equation}\label{iw-diag}
\begin{tikzcd}
    T_{\mathrm{vreg}} \arrow[dd, "p"] &
    Y \arrow[l, "\eta"'] \arrow[r, "\pi"]\arrow[d, "\sim" sloped, "q"'] &
    G_{\mathrm{vreg}} \arrow[d, "\sim" sloped] \\
    &
    X_{\mathrm{vreg}} \arrow[r] \arrow[hookrightarrow, d] &
    G_{\mathrm{vreg}} \arrow[hookrightarrow, d] \\
    \bar{T} &
    X \arrow[r, "\psi"] \arrow[l, "\beta"'] &
    G
\end{tikzcd}
\end{equation}
where $p:T\twoheadrightarrow\bar T$ and
$p_I:I\twoheadrightarrow\bar T$ are the natural quotient maps, and
\[
q:Y\to X, \qquad (y,xT)\longmapsto(y,xI),
\]
is an isomorphism \cite[Lemma 5.3]{INY25}. The maps $\beta$ and $\psi$
are given by
\begin{equation*}
  \beta(y,xI) =p_{I} (x^{-1} y x), \qquad \psi(y,xI) = y.
\end{equation*}
Here $X_{\mathrm{vreg}}:=\psi^{-1}(G_{\mathrm{vreg}})$.
\begin{theorem}[{\cite[Theorem 5.4]{INY25}}]
\label{iw}
There is an isomorphism of perverse sheaves
\[
\mathfrak{Ind}^{G}_{T}\mathcal{L}
\cong \psi_!\beta^*\bar{\mathcal{L}}[\dim G].
\]
and the Weil structure of right hand side is induced by this isomorphism.
\end{theorem}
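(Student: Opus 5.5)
The plan is to identify $\psi_!\beta^*\bar{\mathcal L}[\dim G]$ with $(j_{\mathrm{vreg}})_{!*}$ of its restriction to $G_{\mathrm{vreg}}$, in two steps. First I match the restrictions to the very regular locus. Then I show that the right-hand side satisfies the support and cosupport conditions characterizing an intermediate extension. The Weil structure is then simply transported along the resulting isomorphism, as the statement says.

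\emph{Step 1: the very regular locus.} By the top part of \eqref{iw-diag}, $q:Y\to X_{\mathrm{vreg}}$ is an isomorphism with $\psi\circ q=\pi$. On $Y$ we have $\beta\circ q=p\circ\eta$, since $x^{-1}yx\in T\subset I$ and $p_I|_T=p$. As $p^*\bar{\mathcal L}\cong\mathcal L$, proper base change along $j_{\mathrm{vreg}}$ gives
\[
 j_{\mathrm{vreg}}^*\,\psi_!\beta^*\bar{\mathcal L}[\dim G]
 \cong \pi_!\eta^*\mathcal L_{\mathrm{vreg}}[\dim G].
\]
Because $\pi$ is a finite étale $W$-torsor, this is a (shifted) local system, hence perverse.

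\emph{Step 2: the intermediate extension property.} Since $G/I$ is not proper for $r>0$, I would not argue by smallness of $\psi$ directly. Instead I use the Howe factorization to fiber $X$ over the depth-zero Grothendieck--Springer variety of the twisted Levi $G^0$.
\begin{itemize}
\item Write $I\subset\mathcal K_{\phi,r}$ and factor $\psi$ through
\[
 X\to \widetilde X:=\{(y,x\mathcal K_{\phi,r}) : x^{-1}yx\in\mathcal K_{\phi,r}\}\to G.
\]
\item Along the fibres of the first map, the $\mathcal K_{\phi,r}$-part is an inflated depth-zero Lusztig induction: the variety $\{(z,hB_0)\}$ for $G^0_0$, which is proper and small. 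The remaining directions are vector-group directions, namely the layers $(G^{i+1})^{s_i}_r$ modulo $\tilde T$-contributions.
\item On those vector-group directions, $\beta^*\bar{\mathcal L}$ restricts to an Artin--Schreier-type sheaf that depends on the layer characters $\phi_i$.
\item Genericity of $\phi_i$ on $(G^{i+1})_{r_i}$ should force the fibre cohomology to vanish unless the point lies in the $G$-conjugates of the $\mathcal K_{\phi,r}$-locus. There it contributes a pure shift, and the Heisenberg layer contributes a rank-one constant.
\end{itemize}
This reduces the support and cosupport estimates to the depth-zero statement for $G^0_0$, where Lusztig's smallness of the Grothendieck--Springer map gives strict inequalities on $G^0_0$ off the regular semisimple locus. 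A dimension count, in which the vector-group fibres have dimension exactly compensating the shift $\dim G-\dim G^0_0$, should then propagate these strict inequalities to all strata outside $G_{\mathrm{vreg}}$.

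\emph{Main obstacle.} The hard part will be the vanishing of the Artin--Schreier-type fibre integrals off the $\mathcal K_{\phi,r}$-conjugate locus, uniformly over all strata. I would try first to induct on the Howe factorization, one layer $(G^{i+1})^{s_i}_r$ at a time, using that $\phi_i$ is $G^{i+1}$-generic of depth $r_i$. This should make the relevant quadratic/linear form nondegenerate on the complement of $\mathfrak g^i$, which is the mechanism behind Yu's Heisenberg construction.
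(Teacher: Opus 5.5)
\textbf{There is a genuine gap in Step~2.} Your Step~1 is correct: it is the top square of \eqref{iw-diag} together with $p_I|_T=p$ and $p^*\bar{\mathcal L}\cong\mathcal L$.

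The paper does not prove this statement; it imports it from \cite[Theorem 5.4]{INY25}. The input it relies on later (Section~6) is that the Yu-type complex is $\Psi_+(\mathcal K_0)$ for fully faithful, perversity-preserving functors $\Psi_i^\dagger$. So $\psi_!\beta^*\bar{\mathcal L}[\dim G]$ is semisimple perverse, with simple summands coming from the depth-zero complex on $G^0_0$. Given that, your Step~1 finishes the proof once one checks that no summand is supported on $G\setminus G_{\mathrm{vreg}}$, for instance by comparing endomorphism algebras. Step~2 tries to replace this input by direct support and cosupport estimates, and it does not succeed.

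\emph{Support.} The strict inequalities cannot come from Lusztig's smallness on $G^0_0$. Very regularity involves all roots of $G$, while the depth-zero complex only sees the roots of $G^0$. Take $t\in T$ whose image in $G^0_0$ is regular semisimple but which centralizes a root subgroup $U_\alpha$ with $\alpha$ not a root of $G^0$. Then $t\notin G_{\mathrm{vreg}}$. A dimension count that ``exactly compensates'' the shift $\dim G-\dim G^0_0$ only transports to such points the non-strict bound valid on the regular locus of $G^0_0$.

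In the extreme case $\mathbb G^0=\mathbb T$, which a Howe factorization allows, the depth-zero complex is just $\mathcal L_{\phi_{-1}}$ on $T_0$. There is then nothing to propagate, although $G\setminus G_{\mathrm{vreg}}\neq\varnothing$. So the whole intermediate-extension property must come from the positive-depth layers, which is exactly the step you leave at ``should''.

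Moreover, the vanishing you formulate there is automatic. Since $I\subset\mathcal K_{\phi,r}$, any point with nonzero stalk is already $G$-conjugate into $\mathcal K_{\phi,r}$. What is needed is a degree bound on stalks at points of that locus, uniform over strata, and the proposal does not supply it.

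\emph{Cosupport.} Your fibre computations do not address the cosupport condition. Since $X\cong G\times^I I$ is smooth of dimension $\dim G$, one has $\mathbb D(\psi_!\beta^*\bar{\mathcal L}[\dim G])\cong\psi_*\beta^*\bar{\mathcal L}^\vee[\dim G](\dim G)$. As you note, $\psi$ is not proper: $G/\mathcal K_{\phi,r}$ and, in general, $\mathcal K_{\phi,r}/I$ have affine directions. So compactly supported cohomology of fibres says nothing about the $*$-stalks; note also that the fibre in your first bullet is not proper in general either.

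You would need a cleanness statement identifying $\psi_!$ and $\psi_*$ on these local systems. It does hold a posteriori, because the theorem applies to $\mathcal L^\vee$ with the same $I$ and $j_{!*}$ commutes with duality. Proving it independently is again where the genericity of the $\phi_i$ must enter. Both points are what the functors $\Psi_i^\dagger$ of \cite[\S4]{INY25} package. Without an argument of that kind, Step~2 is a programme rather than a proof.
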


\subsection{Characteristic functions}
Let $X$ be an $\mathbb{F}_q$-variety with Frobenius endomorphism $F$,
and let $D(X)$ denote the bounded derived category of constructible
$\ell$-adic sheaves.  Suppose that $\mathcal F\in D(X)$ is equipped with
a Weil structure, that is, an isomorphism
$\varphi:F^*\mathcal F\xrightarrow{\sim}\mathcal F$.  Its
chracteristic function is
\[\chi_{\mathcal{F}, \varphi}: X(\mathbb{F}_q) \to \ov{\mathbb{Q}}_\ell, \quad x \mapsto \sum_i (-1)^i \mathrm{tr}(\varphi, \mathcal{H}^i(\mathcal{F})_x),\]
where $\mathcal{H}^i(\mathcal{F})_x$ is the stalk at $x$ of the $i$th
cohomology sheaf of $\mathcal{F}$.

\section{A Special Neighborhood}\label{sp}

\begin{lemma}\label{solv}
Let \(B\) be a connected solvable algebraic group over an
algebraically closed field, let \(R=R_u(B)\), and let \(S\) be a maximal
torus of \(B\). Then:

\begin{enumerate}
\item every semisimple element of \(B\) is conjugate by an element of \(R\) to
an element of \(S\);
\item if \(t,t'\in S\) and \(b^{-1}tb=t'\) for \(b\in B\), then \(t=t'\) and
\(b\in Z_B(t)\);
\item \(Z_B(t)\) is connected for every semisimple \(t\in B\).
\end{enumerate}
\end{lemma}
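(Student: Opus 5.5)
We will use the standard structure theory of connected solvable groups. Over an algebraically closed field, $B = S \ltimes R$ with $R = R_u(B)$. All maximal tori of $B$ are conjugate under $R$. Every semisimple element of $B$ lies in some maximal torus (Borel's theorem on connected solvable groups).

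\emph{Part (1).} Let $t\in B$ be semisimple. Since $t$ lies in a maximal torus $S'$ of $B$, and $S'=rSr^{-1}$ for some $r\in R$, the element $r^{-1}tr$ lies in $S$. This proves (1).

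\emph{Part (2).} Consider the quotient map $\pi:B\to B/R$. Its restriction to $S$ is an isomorphism onto the torus $B/R$, which is commutative. Since $\pi(b)^{-1}\pi(t)\pi(b)=\pi(t)$, we get $\pi(t')=\pi(t)$, hence $t'=t$ by injectivity of $\pi|_S$. Then $b^{-1}tb=t$ says exactly that $b\in Z_B(t)$.

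\emph{Part (3).} By (1) we may assume $t\in S$. Write $b=sr$ with $s\in S$, $r\in R$. Since $S$ commutes with $t$, we get $b\in Z_B(t)$ iff $r\in Z_R(t)$, so
\[
Z_B(t)=S\cdot Z_R(t).
\]
It therefore suffices to show $Z_R(t)$ is connected.

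This last step is the main point. The intended argument is the standard fact that for a torus (or a semisimple element) acting on a connected unipotent group, the fixed-point group is connected. In characteristic $0$ this is immediate, since $Z_R(t)=\exp(\mathrm{Lie}(R)^{t})$. In general we argue by induction on $\dim R$:
\begin{itemize}
\item Choose a $t$-stable (indeed $S$-stable) central connected subgroup $R_1\subseteq R$ isomorphic to $\mathbb G_a^m$ on which $S$ acts linearly. One can take the last nontrivial term of the descending central series, or a term of a $B$-stable filtration of $R$ with vector-group quotients.
\item The fixed points $R_1^t$ form the vector subspace on which $t$ acts trivially, which is connected.
\item Use the exact sequence
\[
1\to R_1^t\to R^t\to (R/R_1)^t.
\]
Show that the last map is surjective. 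Given $\bar r$ fixed by $t$ with lift $r$, the element $r^{-1}t r t^{-1}$ is a $1$-cocycle value in $R_1$. Since $t$ generates a group whose Zariski closure is diagonalizable of order prime to $p$ on the relevant part, $H^1$ vanishes, so $r$ can be adjusted by an element of $R_1$ to be $t$-fixed. Concretely, $\mathrm{id}-\mathrm{Ad}(t)$ is invertible on the non-fixed weight spaces.
\item By induction $(R/R_1)^t$ is connected, so $R^t$ is an extension of connected groups, hence connected.
\end{itemize}
Thus $Z_B(t)=S\cdot Z_R(t)$ is connected. Alternatively, one can simply cite Borel, \emph{Linear Algebraic Groups}, Proposition 11.15 / Corollary 11.12, or Humphreys \S19.4, which state (1)–(3) directly.
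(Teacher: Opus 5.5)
Your final sentence is the paper's proof: the paper treats (1)--(3) as standard structure theory of connected solvable groups and cites Borel's \emph{Linear Algebraic Groups}. The self-contained argument you give first is a more explicit route. Parts (1) and (2) are correct as written. So is the reduction $Z_B(t)=S\cdot Z_R(t)$ for $t\in S$, which turns (3) into connectedness of $Z_R(t)$. Induction on $\dim R$ through a central $\mathbb G_a$ is the right strategy for that step. It explains why (3) holds, whereas the citation buys brevity.

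Two steps of the induction need tightening.

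\emph{The choice of $R_1$.} Your first suggestion, the last nontrivial term of the descending central series, fails in characteristic $p$. Take $B=\mathbb G_m\ltimes W_2$ with $\lambda\cdot(a_0,a_1)=(\lambda a_0,\lambda^p a_1)$. Then $R=W_2$ is commutative of exponent $p^2$, so the last nontrivial term is $R$ itself, which is not a vector group. Your alternative works if you take the smallest nontrivial member $N$ of a chain of connected closed subgroups of $R$, normal in $B$, with one-dimensional quotients. Then $N\cong\mathbb G_a$. It is central in $R$, because conjugation gives a homomorphism $R\to\operatorname{Aut}(\mathbb G_a)=\mathbb G_m$, and any such homomorphism from a unipotent group is trivial. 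Finally, $S$ acts on $N$ through a character $\chi$.

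\emph{The lifting step.} The remark that $\mathrm{id}-\mathrm{Ad}(t)$ is invertible on the non-fixed weight spaces only handles $\chi(t)\neq1$. When $\chi(t)=1$ that operator is zero, and you must show that $c=r^{-1}trt^{-1}$ is itself trivial. Finite order prime to $p$ is not available in general, since over an arbitrary algebraically closed field $t$ may have infinite order. Instead, pass to $D=\overline{\langle t\rangle}\subseteq S$:
\begin{itemize}
\item The stabilizer of $\bar r$ in $S$ is closed and contains $t$, so it contains $D$.
\item Hence $f(d)=r^{-1}drd^{-1}$ defines a morphism $D\to N$ satisfying $f(de)=f(d)\,\bigl(d f(e) d^{-1}\bigr)$.
\item If $\chi(t)=1$, then $D\subseteq\ker\chi$ acts trivially on $N$, so $f$ is a homomorphism from a diagonalizable group to $\mathbb G_a$.
\item Such a homomorphism is trivial, so $c=f(t)=1$.
\end{itemize}
With these two repairs your proof of (3) is complete.
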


\begin{proof}

These are standard facts of the structure theory of connected
solvable algebraic groups; see, for example, \cite{Borel_91}.
\end{proof}

\begin{lemma}\label{U}
Let $s\in G^F$ be semisimple and write $H=Z^0_{G}(s)$. There is an open
subset $\mathcal U\subset H$ containing the identity element $e$ such that

\[
\begin{aligned}
&h\mathcal U h^{-1}=\mathcal U &&(h\in H),\\
&g\in\mathcal U\Longleftrightarrow g_s\in\mathcal U,\\
&F(\mathcal U)=\mathcal U,\\
&Z_{G}(s g_s)\subseteq Z_{G}(s) &&(g\in\mathcal U).
\end{aligned}
\]
\end{lemma}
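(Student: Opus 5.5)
Following Lusztig's classical construction, I would define $\mathcal U$ as the set of all $g\in H$ such that $Z_G(sg_s)\subseteq Z_G(s)$. Then I would verify that this set is open, $H$-stable, $F$-stable and contains $e$. Three properties are essentially formal.

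\emph{Conjugation invariance.} For $h\in H$ we have $(hgh^{-1})_s=hg_sh^{-1}$, $hsh^{-1}=s$, and $hZ_G(s)h^{-1}=Z_G(s)$. Hence $Z_G(s\,hg_sh^{-1})=hZ_G(sg_s)h^{-1}$, so $\mathcal U$ is $H$-stable.

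\emph{Dependence only on $g_s$.} Since $(g_s)_s=g_s$, the defining condition for $g$ and for $g_s$ is literally the same, which gives $g\in\mathcal U\Leftrightarrow g_s\in\mathcal U$.

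\emph{$F$-stability.} Since $F(s)=s$ and $F$ commutes with Jordan decomposition, $F(Z_G(sg_s))=Z_G(sF(g)_s)$. Because $F$ is a bijection on points and $F(Z_G(s))=Z_G(s)$, this gives $F(\mathcal U)=\mathcal U$.

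\emph{Identity.} Clearly $e\in\mathcal U$. The fourth listed property holds by construction.

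The real content is openness. By $H$-stability and the $g\mapsto g_s$ property, $\mathcal U$ is a union of fibers of the "semisimple part" map. I would reduce to a torus as follows. Every $g\in H$ lies in a Borel subgroup $B_H$ of $H$. By \Cref{solv}(1), $g_s$ is $R_u(B_H)$-conjugate into a fixed maximal torus $S\subset H$ containing $s$; in the paper's setting $S=T$ after \Cref{supportH}. So $\mathcal U=H\cdot\bigl((S\cap\mathcal U)\cdot R_u\bigr)$ in the usual sense, and it suffices to show two things:
\begin{enumerate}
\item $S\cap\mathcal U$ is open in $S$;
\item the set $\{g\in H: g_s\text{ is conjugate into }S'\}$ is open for every open $H$-stable $S'\subset S$ that is also $W_H$-stable.
\end{enumerate}
The second claim is the standard statement that the preimage, under the adjoint quotient-type map, of an open set is open. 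For $G_r$ with $r>0$, which is not reductive, I would instead use the map $H\to H/H^{0+}\cdot$(reductive quotient), or work directly with the stratification.

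For the torus part, if $t\in S$ then $Z_G(st)\subseteq Z_G(s)$ fails only if some "root-type" condition becomes an equality. In the deep-level setting, with $G=G_r$ and $Z_G(st)$ computed via a lift, this means some root $\alpha$ (an affine root condition at each depth) with $\alpha(s)\neq1$ modulo the relevant filtration has $\alpha(st)=1$ modulo it. Each such condition $\alpha(t)=\alpha(s)^{-1}$ cuts out a closed subset of $S$ not containing $e$, and there are finitely many of them. So $S\cap\mathcal U$ is the complement of a finite union of closed sets, hence open, and it is $W_H$-stable because $H$ fixes $s$.

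\emph{Main obstacle.} The main difficulty is making the centralizer condition in $G_r$ (rather than in reductive $\mathbb G$) equivalent to finitely many closed conditions on $t$. I would first try to lift $s$ and $t$ to $\tilde s,\tilde t\in\mathbb T(\mathcal O_{\breve k})$ using \cite[Lemma 6.2]{CO_25}. Then I would describe $Z_{G_r}(st)$ as generated by $T_r$ and the root subgroup pieces $U_{\alpha,x,m}$ with $\alpha(\tilde s\tilde t)\equiv1$ modulo the appropriate power of $\varpi$. These congruences are closed conditions on the finite-dimensional quotient, which gives openness.
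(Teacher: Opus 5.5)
\textbf{Verdict: there is a genuine gap.} Your formal checks are fine for the maximal choice $\mathcal U=\{g\in H: Z_G(sg_s)\subseteq Z_G(s)\}$: conjugation invariance, dependence only on $g_s$, $F$-stability and $e\in\mathcal U$ all hold. With that choice, however, all the content sits in openness, and that step is not carried out.

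\emph{The non-reductive case is not handled.} For $r>0$ the group $H=Z^0_{G_r}(s)$ is not reductive. Your step (2), which pulls back an open $W_H$-stable subset of a torus along an adjoint-quotient map, therefore has no input, and ``use $H\to H/H^{0+}$ or work with the stratification'' is only a placeholder. To make it work you would need two things you do not supply:
\begin{enumerate}
\item Two semisimple elements of $H$ with the same image in the reductive quotient $H_0$ are conjugate under the unipotent kernel. This is what lets membership in $\mathcal U$ factor through $H\to H_0$ and the Steinberg quotient of $H_0$.
\item A description of $Z_{G_r}(\sigma)$ for $\sigma$ in the maximal torus of $T_r$. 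Note that $T_r=T_0\times T_r^{0+}$ is not a torus, so ``$S=T$'' is not right.
\end{enumerate}

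\emph{The torus-level criterion is wrong as stated.} $Z_G(st)$ need not be connected, and $Z_G(st)\subseteq Z_G(s)$ can fail without any root condition becoming an equality. Take $r=0$, $\mathbb G=PGL_2$ with $\mathbf x$ hyperspecial, $q\geq5$ odd, and $s$ the image of $\operatorname{diag}(1,a)$ with $a\in\mathbb F_q^\times$, $a\neq\pm1$. Then $H=Z_G(s)=T$. For $t\in T$ with $st$ the image of $\operatorname{diag}(1,-1)$ one has $\alpha(st)=-1\neq1$, yet $Z_G(st)=N_G(T)\not\subseteq Z_G(s)$. You must also impose the closed conditions $w(st)=st$ for Weyl elements $w$ with $w(s)\neq s$. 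These still avoid $e$, so openness on the torus survives, but your description of $Z_{G_r}(st)$ by $T_r$ and root pieces alone omits exactly these components.

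\emph{How the paper avoids this.} It does not take the maximal $\mathcal U$, but the nonvanishing locus of an explicit regular function:
\begin{enumerate}
\item Fix an $F$-rational faithful $\iota:G\hookrightarrow\operatorname{GL}(V)$ and split $V=\bigoplus_\lambda V_\lambda$ into $\iota(s)$-eigenspaces, each preserved by $H$.
\item Let $P_{\lambda,g}$ be the characteristic polynomial of $\lambda\iota(g)|_{V_\lambda}$, and set $\mathcal U=\{g\in H:\Delta(g)\neq0\}$ with $\Delta(g)=\prod_{\lambda\neq\mu}\operatorname{Res}_X(P_{\lambda,g},P_{\mu,g})$.
\item Openness is then automatic and $\Delta(e)\neq0$. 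Also $\Delta(g)=\Delta(g_s)$ because characteristic polynomials see only the semisimple part, and $H$-invariance and $F$-stability are immediate.
\item The centralizer condition is linear algebra. The spectra of $\iota(sg_s)$ on different blocks are disjoint, so anything commuting with $\iota(sg_s)$ preserves every $V_\lambda$ and hence commutes with $\iota(s)$.
\end{enumerate}
This uses no root data, no Weyl group and no adjoint quotient, so it applies verbatim to the non-reductive $G_r$. The only cost is that this $\mathcal U$ may be smaller than yours, which the lemma does not care about.
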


\begin{proof}
Choose an $F$-defined faithful representation
\[
\iota:G\hookrightarrow \operatorname{GL}(V).
\]
Decompose $V$ into the eigenspaces of $\iota(s)$:

\[
V=\bigoplus_{\lambda\in\Lambda}V_\lambda,
\qquad \iota(s)|_{V_\lambda}=\lambda\,\mathrm{id}.
\]
$H$ preserves every $V_\lambda$. For $g\in H$, set
\(
P_{\lambda,g}(X)
 =\det\!\left(X-\lambda\,\iota(g)|_{V_\lambda}\right)
\)
and let
\[
\Delta(g)=\prod_{\{\lambda,\mu\}\subset\Lambda,\ \lambda\ne\mu}
 \operatorname{Res}_X(P_{\lambda,g},P_{\mu,g}).
\]

The coefficients of the polynomials $P_{\lambda,g}$ and $\Delta$
are regular functions on $H$. At the identity,
$P_{\lambda,e}=(X-\lambda)^{\dim V_\lambda}$; the factors belonging to distinct eigenvalues are coprime. Hence $\Delta(e)\ne0$, and
\[
\mathcal U:=\{g\in H:\Delta(g)\ne0\}
\]
is an open neighborhood of $e$.

If $h\in H$, conjugation by $\iota(h)$ preserves every block $V_\lambda$ and
does not change its characteristic polynomial; hence $\mathcal U$ is
$H$-conjugation invariant. Moreover, $\Delta(g)=\Delta(g_s)$. Since $s$ is $F$-fixed, $F$ permutes the eigenspace
blocks and sends their resultants to their Frobenius transforms. Hence $F(\mathcal U)=\mathcal U$.

It remains to prove the last statement. For $g\in\mathcal U$, the resultant
condition says that the spectra of
$\lambda\iota(g_s)|_{V_\lambda}$ and
$\mu\iota(g_s)|_{V_\mu}$ are disjoint whenever $\lambda\ne\mu$. Thus every
endomorphism commuting with the semisimple operator
\[
\iota(sg_s)=\iota(s)\iota(g_s)
\]
preserves each $V_\lambda$. It consequently commutes with $\iota(s)$, which
acts on $V_\lambda$ as the scalar $\lambda$. Intersecting this inclusion of
centralizers in $\operatorname{GL}(V)$ with the closed subgroup $\iota(G)$
gives
\[
Z_{G}(sg_s)\subseteq Z_{G}(s).
\]
\end{proof}

\begin{proposition}\label{Tsep}
For the neighborhood $\mathcal U$ defined in \Cref{U}, if
$g\in\mathcal U$, $x\in G$, and
\[
x^{-1}s g_s x\in T,
\]
then
\[
x^{-1}s x\in T,
\qquad
x^{-1}g_sx\in T.
\]
\end{proposition}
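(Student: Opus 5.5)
Set $t':=x^{-1}sg_sx\in T$. By the last property in \Cref{U}, $Z_G(sg_s)\subseteq Z_G(s)$. Conjugating by $x$ gives
\[
Z_G(t')=x^{-1}Z_G(sg_s)x\subseteq x^{-1}Z_G(s)x=Z_G(x^{-1}sx).
\]
The idea is that $x^{-1}sx$ commutes with everything commuting with $t'$, in particular with $T$, since $T$ is commutative and contains $t'$. So $x^{-1}sx\in Z_G(T)$. The key input is then $Z_G(T)=T$ for the Moy--Prasad quotient torus $T=T_r$. More precisely, we only need that a semisimple element of $G$ centralizing $T$ lies in $T$.

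I would prove this using the structure of $G_r=G_r^0$. There is an exact sequence $1\to G^{0+}_r\to G_r\to G_0\to 1$ with $G_0$ reductive (more precisely, its reductive quotient contains the image of $T$ as a maximal torus), and $G_r^{0+}$ is unipotent. Let $z:=x^{-1}sx$; it is semisimple. Its image $\bar z$ in $G_0$ centralizes the maximal torus $\bar T$, so $\bar z\in\bar T$, using the standard fact for reductive groups. Hence $z\in T\cdot G^{0+}_r$, which is a connected solvable group with maximal torus $T$ and unipotent radical $G_r^{0+}\cdot T^{0+}$ (this requires $T$ to be the full $T_r$). By \Cref{solv}(1), $z=utu^{-1}$ for some $u$ in the unipotent radical $R$ and some $t\in T_0$-part. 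Since $z$ commutes with $T$, so does $t$, and commuting with $T$ pins down the conjugator as well. Concretely, I would use that $Z_R(T_{\mathrm{ss}})$ is only $T^{0+}$, because root subgroups in $G_r^{0+}$ carry nontrivial weights. In this setup it is cleaner to argue directly: $T$ acting on $\mathrm{Lie}$-type filtration quotients of $G^{0+}_r$ has fixed points only along $T$. This gives $z\in T$, i.e.\ $x^{-1}sx\in T$.

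Finally, $x^{-1}g_sx=(x^{-1}sx)^{-1}t'\in T$, since $s$ and $g_s$ commute (because $g_s\in H=Z^0_G(s)$) and both factors lie in $T$.

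The main obstacle I expect is the step $Z_G(T)^{\mathrm{ss}}\subseteq T$ for the positive-depth quotient $G_r$. It needs care with the Moy--Prasad filtration and with whether $T$ contains enough of a torus to cut out the root directions at each depth. If the direct argument proves awkward, I would instead lift to characteristic zero. Using a lift $\tilde s$ as in \Cref{supportH} and very-regular type considerations, I would show that the centralizer of $T_r$ in $G_r$ is $T_r$ via the root-group decomposition $G_r=\prod U_{\alpha,r}\cdot T_r$ and the nontriviality of the roots on $T_0$.
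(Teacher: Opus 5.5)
Your proof is the paper's proof. From the last property of the neighborhood, $T\subseteq Z_G(x^{-1}sg_sx)=x^{-1}Z_G(sg_s)x\subseteq Z_G(x^{-1}sx)$, so $x^{-1}sx\in Z_G(T)=T$, and then $x^{-1}g_sx=(x^{-1}sx)^{-1}(x^{-1}sg_sx)\in T$.

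The paper uses $Z_G(T)=T$ for $T=T_r\subset G=G_r$ without comment, so your extra justification of that step is supplementary. Its loose "pins down the conjugator" step closes cleanly as follows. Take $B=T\cdot G_r^{0+}$, let $S$ be its maximal torus and $R=G_r^{0+}$ its unipotent radical. Then $z=x^{-1}sx$ is a semisimple element of $Z_B(S)=S\times Z_R(S)$, hence lies in $S$. Finally, the commutativity of $s$ and $g_s$ is not needed in your last step.
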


\begin{proof}
Set $c=x^{-1}s g_s x$. By \Cref{U}, we have
\[
T\subseteq Z_{G}(c)
=x^{-1}Z_{G}(sg_s)x
\subseteq x^{-1}Z_{G}(s)x
=Z_{G}(x^{-1}s x).
\]
It follows that $x^{-1}s x\in Z_{G}(T)=T$ and
\(
x^{-1}g_sx=(x^{-1}s x)^{-1}c\in T.
\)
\end{proof}

\begin{theorem}
\label{sep}
There is an \(H\)-conjugation-invariant, \(F\)-stable open neighborhood
\(\mathcal U\subset H\) of the identity element \(e\) such that
\begin{enumerate}
\item \(g\in\mathcal U\) if and only if \(g_s\in\mathcal U\). In particular, $\mathcal U$ contains every unipotent element of $H$;
\item if \(g\in\mathcal U\), \(x\in G\), and \(x^{-1}sgx\in T\), then
      \(x^{-1}sx,x^{-1}g_sx\in T\);
\item if \(g\in\mathcal U\), \(x\in G\), and \(x^{-1}sgx\in I\), then
      \(x^{-1}sx,x^{-1}g_sx\in I\).
\end{enumerate}
\end{theorem}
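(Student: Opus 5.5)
The plan is to take $\mathcal U$ to be the neighborhood constructed in \Cref{U}. Its first three properties there give $H$-conjugation invariance, $F$-stability, and the equivalence $g\in\mathcal U\Leftrightarrow g_s\in\mathcal U$, which is the first half of (1). For the ``in particular'' in (1): if $g\in H$ is unipotent then $g_s=e\in\mathcal U$, so $g\in\mathcal U$.

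For (2), I will reduce to \Cref{Tsep}. Let $g\in\mathcal U\subset H$. Since $g$ commutes with $s$, so do $g_s$ and $g_u$. Hence $sg=(sg_s)\cdot g_u$ is the Jordan decomposition of $sg$: the factor $sg_s$ is semisimple as a product of commuting semisimple elements, and it commutes with the unipotent element $g_u$. Conjugating by $x$, the element $c:=x^{-1}sgx\in T$ has semisimple part $c_s=x^{-1}sg_sx$. Jordan components of an element of a closed subgroup lie in that subgroup, and $T$ consists of semisimple elements up to its unipotent part; in every case $c_s\in T$. \Cref{Tsep} then yields $x^{-1}sx\in T$ and $x^{-1}g_sx\in T$.

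For (3), I will pass from $I$ to $T$ using \Cref{solv}. The structural input is that $I=\mathcal I_{\phi,U,r}=(\mathcal K_{\phi,r}\cap U_r)\,T\,(\mathcal K^+_{\phi,r}\cap\ov U_r)$ is a connected solvable group with maximal torus contained in $T$. More precisely, its subgroup $(\mathcal K_{\phi,r}\cap U_r)(\mathcal K^+_{\phi,r}\cap \ov U_r)\,T^{0+}_r$ is normal, unipotent and normalized by $T$, so $I$ is solvable with $R_u(I)$ equal to this subgroup. This follows from the construction in \cite{Nie_24}, since the commutators of the root subgroup pieces land in positive-depth pieces.

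Granting this structure, let $c=x^{-1}sgx\in I$. As in (2), $c_s=x^{-1}sg_sx\in I$. By \Cref{solv}(1) there is $v\in R_u(I)\subset I$ such that $v^{-1}c_sv$ lies in a maximal torus of $I$, hence in $T$. Thus $(xv)^{-1}sg_s(xv)\in T$. Applying \Cref{Tsep} with $xv$ in place of $x$ gives $(xv)^{-1}s(xv)\in T$ and $(xv)^{-1}g_s(xv)\in T$. Conjugating back by $v\in I$ gives $x^{-1}sx\in vTv^{-1}\subset I$ and $x^{-1}g_sx\in I$.

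The main obstacle is justifying the structure of $I$ used above: that $I$ is a closed connected solvable subgroup, that its maximal tori are $I$-conjugate into $T$, and that $R_u(I)$ is as described. The issue is that $G_r$ is only perfectly of finite type, so \Cref{solv} is applied after passing to the underlying reduced algebraic groups. I would first check this for $T_r$ itself, which is the product of a torus with a connected unipotent group. I would then use the $T$-stable filtration by Moy--Prasad depths to see that the remaining factors form a connected unipotent normal subgroup.
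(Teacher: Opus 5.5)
Your proposal is correct and follows the paper's proof. You take $\mathcal U$ from \Cref{U} and apply \Cref{Tsep} to the semisimple part $x^{-1}sg_sx$ for (2); for (3) you conjugate by an element of $I$ to move $x^{-1}sg_sx$ into $T$, apply \Cref{Tsep}, and conjugate back. The only difference is that you make explicit what the paper leaves implicit: the Jordan decomposition $sg=(sg_s)g_u$, and the use of \Cref{solv}(1) on the connected solvable group $I$ (with a maximal torus inside $T$) to produce the conjugating element.
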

\begin{proof}
Take the neighborhood from \Cref{U}.  To prove
(2), apply \Cref{Tsep} to
$(x^{-1}sgx)_s=x^{-1}sg_sx$.  For (3), after conjugating by a suitable
element of $I$, we may assume that $x^{-1}sg_sx\in T$. Using
\Cref{Tsep} after this conjugation, then conjugating
back proves the theorem.
\end{proof}

\section{Geometric Structure}
\label{geom}
Recall that in \Cref{iw-sec}
\[
 X=\{(y,xI)\in G\times G/I:x^{-1}yx\in I\},
\]
and set
\[X_{\mathcal U}=\{(g,xI)\in X:g\in s\mathcal U\}.\]
Also set
\[
 \mathcal M=\{x\in G:x^{-1}sx\in T\},\qquad
 \widehat{\mathcal M}=\{x\in G:x^{-1}sx\in I\}.
\]
We write
\[
 \Gamma=H\backslash\mathcal M/T,
 \qquad
 \widehat\Gamma=H\backslash\widehat{\mathcal M}/I.
\]
Unless stated otherwise, we assume \(\mathcal M\ne\varnothing\).

\subsection{Analysis of the variety $X$}

\begin{theorem}[Richardson's orbit theorem]\label{Rich}
Let \(A\) be an algebraic group, let \(Q\subset A\) be a
closed subgroup, and let \(a\in A\) be semisimple. Then the fixed-point variety
\[
(A/Q)^a=\{xQ:x^{-1}ax\in Q\}
\]
is a finite disjoint union of $Z^0_{A}(a)$-orbits, and each orbit is
both open and closed.
\end{theorem}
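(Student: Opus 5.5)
The plan is to prove the statement first for a connected algebraic group $A$ and then deduce the general case by working one coset of $A^0$ at a time. Let $Z=Z^0_A(a)$. It acts on $(A/Q)^a$ by left multiplication: if $z\in Z$ and $x^{-1}ax\in Q$, then $(zx)^{-1}a(zx)=x^{-1}ax\in Q$. Fix a point $xQ\in(A/Q)^a$. The orbit map $Z\to (A/Q)^a$, $z\mapsto zxQ$, has differential at $e$ with image $(\operatorname{Lie}Z+\operatorname{Ad}(x)\operatorname{Lie}Q)/\operatorname{Ad}(x)\operatorname{Lie}Q$. The key step is to show that the tangent space of the scheme-theoretic fixed locus at $xQ$ is exactly this image. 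Once that holds, every orbit is open in $(A/Q)^a$. Each orbit is then also closed, since its complement is a union of open orbits. Finiteness follows because $(A/Q)^a$ is a closed subvariety of a variety of finite type, so it has finitely many connected components.

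For the tangent-space computation, replace $Q$ by $Q'=xQx^{-1}$, so that $a\in Q'$ and the point is $eQ'$. The tangent space to $A/Q'$ at $eQ'$ is $\mathfrak a/\mathfrak q'$, and $a$ acts on it by $\operatorname{Ad}(a)$. The tangent space to the fixed locus is contained in the fixed vectors $(\mathfrak a/\mathfrak q')^{\operatorname{Ad}(a)}$. Because $a$ is semisimple and lies in $Q'$, $\operatorname{Ad}(a)$ is diagonalizable on $\mathfrak a$ and preserves $\mathfrak q'$. Taking invariants is then exact on the sequence $0\to\mathfrak q'\to\mathfrak a\to\mathfrak a/\mathfrak q'\to0$, so
\[
(\mathfrak a/\mathfrak q')^{a}=\mathfrak a^{a}/\mathfrak q'^{a}.
\]
We also need the standard fact that $\operatorname{Lie}Z^0_A(a)=\mathfrak a^{\operatorname{Ad}(a)}$ for semisimple $a$ (see \cite{Borel_91}). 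Together these show that the orbit map is surjective on tangent spaces.

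From this, smoothness and openness of the orbit follow by a dimension count. The orbit $Z\cdot eQ'$ has dimension
\[
\dim Z-\dim(Z\cap Q')\ \geq\ \dim\mathfrak a^{a}-\dim\mathfrak q'^{a},
\]
which bounds the tangent space of the fixed locus at that point. So the fixed locus is smooth of that dimension there, and the orbit is an open subset of it.

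The main obstacle is the positive-characteristic subtlety in identifying $\operatorname{Lie}Z^0_A(a)$ with $\mathfrak a^{a}$, together with possible non-reducedness of fixed-point schemes. Semisimplicity of $a$ is exactly what makes centralizers smooth, and this is the point where I would cite Borel. In the setting of this paper, the groups are perfections of smooth groups; everything here is topological, so passing to perfections does not affect the orbit decomposition.
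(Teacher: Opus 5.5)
Your argument is correct, but it does not follow the paper. The paper gives no argument of its own: it invokes Richardson's Theorem A for $S=\overline{\langle a\rangle}$. This $S$ is a smooth diagonalizable, hence linearly reductive, group, and it has the same fixed points on $A/Q$ as $a$.

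You instead prove this special case directly. You show that the orbit $Z\cdot xQ$ satisfies
$\dim Z\cdot xQ\ge\dim\mathfrak a^{a}-\dim\mathfrak q'^{a}=\dim(\mathfrak a/\mathfrak q')^{a}$.
The right-hand side bounds the Zariski tangent space of the fixed locus at $xQ$, so the fixed locus is smooth there and the orbit is open. Openness together with disjointness and noetherianity then gives closedness and finiteness. This is the standard infinitesimal mechanism behind results of Richardson's type, specialized to a single semisimple element.

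What your route buys is that the two real inputs are visible:
\begin{enumerate}
\item smoothness of semisimple centralizers, $\operatorname{Lie}Z_A(a)=\mathfrak a^{\operatorname{Ad}(a)}$;
\item exactness of $\operatorname{Ad}(a)$-invariants on $0\to\mathfrak q'\to\mathfrak a\to\mathfrak a/\mathfrak q'\to 0$.
\end{enumerate}
You also address the passage to the perfect groups $G_r$, which the paper's one-line citation leaves implicit. The citation, in turn, is shorter and covers arbitrary linearly reductive $S$.

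Three small points to tighten.

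\textbf{The bound on the stabilizer.} Justify $\dim(Z\cap Q')\le\dim\mathfrak q'^{a}$ by applying the same smoothness fact to $Q'$. Indeed $Z\cap Q'\subseteq Z_{Q'}(a)$, and $\operatorname{Lie}Z_{Q'}(a)=\mathfrak q'^{a}$.

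\textbf{From smoothness to openness.} Add one sentence here. Once the fixed locus is smooth at $xQ$ of dimension $\dim Z\cdot xQ$, the closure $\overline{Z\cdot xQ}$ is the unique irreducible component through $xQ$. Since the orbit is open in its closure, it is open in the fixed locus near $xQ$, and homogeneity under $Z$ finishes the argument.

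\textbf{The reduction to connected $A$.} You announce this reduction, but nothing in your argument uses connectedness, so drop it. The coset-by-coset version would not be a literal instance of the connected case anyway. When $a\notin A^0$, left multiplication by $a$ permutes the pieces $A^0xQ/Q$. On a piece it stabilizes, it acts by an automorphism not coming from $A^0$.
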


\begin{proof}
This follows from \cite[Theorem A]{Richardson_1982}, applied with the
subgroup $S=\overline{\langle a\rangle}$.
\end{proof}

\begin{lemma}\label{lem4.2}
The natural map
\begin{equation*}
\Gamma=H\backslash\mathcal M/T
   \xrightarrow{\sim}
\widehat\Gamma=H\backslash\widehat{\mathcal M}/I        
\end{equation*}
is a bijection between finite sets. Moreover, \(\widehat\Gamma\) is
naturally the set of \(H\)-orbits on \((G/I)^s\).
\end{lemma}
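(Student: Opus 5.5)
Consider the map $\mathcal M\to\widehat{\mathcal M}$ given by inclusion; it is well defined because $T\subset I$. It is compatible with left multiplication by $H$ and with right multiplication by $T\subset I$, so it descends to a map $\Gamma\to\widehat\Gamma$. The identification of $\widehat\Gamma$ with $H$-orbits on $(G/I)^s$ is immediate: $x\mapsto xI$ induces a bijection $\widehat{\mathcal M}/I\cong (G/I)^s=\{xI:x^{-1}sx\in I\}$, and it is $H$-equivariant. Finiteness of $\widehat\Gamma$ then follows from Richardson's theorem (\Cref{Rich}) applied to $A=G$, $Q=I$, $a=s$, since $H=Z^0_G(s)$. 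Finiteness of $\Gamma$ will follow once the bijection is established (alternatively, one can apply \Cref{Rich} with $Q=T$).

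\emph{Surjectivity.} Let $x\in\widehat{\mathcal M}$, so $x^{-1}sx\in I$ is semisimple. The group $I=(\mathcal K_{\phi,r}\cap U_r)T_r(\mathcal K^+_{\phi,r}\cap\ov U_r)$ has $T$ as a Levi-type complement, and its remaining part is a unipotent normal subgroup $I^\flat$, with $I=T\ltimes I^\flat$. Under the hypotheses on $p$, every semisimple element of $I$ is $I^\flat$-conjugate into $T$; this is the analogue of \Cref{solv}(1). Concretely, I would filter $I^\flat$ by Moy--Prasad depth and by root height, and solve successive Lang-type conjugation equations on the abelian graded pieces, on which $\mathrm{Ad}(t)-1$ is bijective on the components where $t$ acts nontrivially. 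So there is $i\in I$ with $(xi)^{-1}s(xi)\in T$, i.e.\ $xi\in\mathcal M$ has the same image in $\widehat\Gamma$.

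\emph{Injectivity.} Suppose $x,x'\in\mathcal M$ with $x'=hxi$ for some $h\in H$, $i\in I$. Then $t=x^{-1}sx\in T$ and $t'=x'^{-1}sx'=i^{-1}ti\in T$. The key claim is the analogue of \Cref{solv}(2) for $I$: if $t,t'\in T$ and $i^{-1}ti=t'$ with $i\in I$, then $t=t'$ and $i\in Z_I(t)$. Projecting to $I/I^\flat\cong T$ gives $t=t'$, since $T$ is commutative. Next I need $Z_I(t)=Z_T(t)\cdot Z_{I^\flat}(t)=T\cdot Z_{I^\flat}(t)$, and that $Z_{I^\flat}(t)$ lies in $Z_G(t)$, which is contained in the connected centralizer up to the relevant component. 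Then $x^{-1}Z_{I^\flat}(t)x\subseteq Z_G(s)$. I need this inside $H=Z^0_G(s)$, using connectedness of the unipotent group $Z_{I^\flat}(t)$ (the analogue of \Cref{solv}(3)). Write $i=\tau v$ with $\tau\in T$ and $v\in Z_{I^\flat}(t)$. Then $x'=hxv\tau=h(xvx^{-1})x\tau$ with $xvx^{-1}\in H$, so $x$ and $x'$ have the same class in $\Gamma$.

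\emph{Main obstacle.} The main obstacle is the structural input on $I$: that $I^\flat$ is a connected unipotent normal subgroup, that semisimple elements of $I$ are $I^\flat$-conjugate into $T$, and that $Z_{I^\flat}(t)$ is connected. I would try to deduce all three by realizing $I$ as a connected solvable perfect group scheme with maximal torus $T_0$ and unipotent radical containing $T^{0+}$, and then applying \Cref{solv} directly. This requires care because $T=T_r$ itself has a unipotent part, so a semisimple $t$ lies in the reductive quotient torus lift $T_0$. I would handle this by first replacing $s$ by its image, using that $s$ is semisimple, hence lies in a torus of $I$ conjugate to the Teichmüller lift of $T_0$.
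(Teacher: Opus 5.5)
Your route is the paper's. Surjectivity comes from conjugating the semisimple element $x^{-1}sx\in I$ into $T$ inside $I$. Injectivity comes from $t=t'$, $i\in Z_I(t)$ and connectedness of $Z_I(t)$: then $xZ_I(t)x^{-1}$ is a connected subgroup of $Z_G(s)$, hence lies in $H$. Finiteness comes from \Cref{Rich}.

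The problem is the structural claim in the middle, which is false in general. There is no decomposition $I=T\ltimes I^\flat$ with $I^\flat$ normal and $I/I^\flat\cong T$. The $T$-component of $I=(\mathcal K_{\phi,r}\cap U_r)T(\mathcal K^+_{\phi,r}\cap\ov U_r)$ is multiplicative only modulo $T_{\phi,r}$. That is exactly why the paper uses $p_I:I\to\bar T=T/T_{\phi,r}$ rather than a map to $T$.

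Here is a counterexample. Take $\mathbb G=\mathrm{GL}_2$ with Howe datum $\mathbb G^0=\mathbb G$, a hyperspecial point, and $r\ge1$; then $I$ is the ordinary Iwahori subgroup modulo $G_{r+}$. Let $\bar u=\begin{pmatrix}1&0\\ \varpi b&1\end{pmatrix}$ and $u=\begin{pmatrix}1&a\\0&1\end{pmatrix}$ with $a,b$ units. Then $\bar u u=u'\tau\bar u'$ with $\tau=\mathrm{diag}((1+\varpi ab)^{-1},1+\varpi ab)\neq1$. Since $I^\flat\supseteq[I,I]\supseteq[T_0,I\cap U_r]=I\cap U_r$, and likewise for $\ov U_r$, all of $\bar u,u,u',\bar u'$ lie in $I^\flat$. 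Hence $\tau\in I^\flat\cap T$, a contradiction. So $(\mathcal K_{\phi,r}\cap U_r)(\mathcal K^+_{\phi,r}\cap\ov U_r)$ is not a subgroup, and the step ``projecting to $I/I^\flat\cong T$ gives $t=t'$'' fails as written.

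The repair is the structure you sketch in your last paragraph, which the paper uses implicitly through \Cref{solv}. The group $I$ is connected solvable: its image in $G_0$ is $(G^0\cap U)_0T_0$, and $I\cap G^{0+}_r$ is connected unipotent. Its maximal torus is the Teichm\"uller torus $T_0\subset T$, and $I=T_0\ltimes R_u(I)$ with $T^{0+}\subset R_u(I)$. Because $t,t'$ are semisimple, they lie in $T_0$. So your projection argument works with $I/R_u(I)\cong T_0$; equivalently, \Cref{solv}(2) gives $t=t'$ and $i\in Z_I(t)$. \Cref{solv}(1) gives surjectivity directly, with no depth/root-height approximation needed. \Cref{solv}(3) gives connectedness of $Z_I(t)=T_0\ltimes Z_{R_u(I)}(t)$.

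Then, as in the paper, $xix^{-1}\in H$, so $x'=h(xix^{-1})x\in Hx$. You do not need to split $i$ at all. As written, $i=\tau v$ gives $x'=hx\tau v$, not $hxv\tau$.
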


\begin{proof}
If \(x\in\widehat{\mathcal M}\), choose \(i\in I\) such that
\(i^{-1}x^{-1}sxi\in T\). Thus \(xi\in\mathcal M\), and the map is
surjective.

For injectivity, suppose that \(x_1,x_2\in\mathcal M\) and
\(x_2=h x_1 i\) with \(h\in H\), \(i\in I\). Write
\[
t_j=x_j^{-1}sx_j\in T.
\]
 Then
\[
t_2=i^{-1}t_1i.
\]

It follows that \(t_1=t_2\). Thus \(i\in Z_I(t_1)\), and this centralizer
is connected. Therefore
\[
x_1Z_I(t_1)x_1^{-1}\subset Z^0_G(s)=H,
\]
so \(x_1ix_1^{-1}\in H\). Hence
\[
x_2=h(x_1ix_1^{-1})x_1
\]
lies in the same left \(H\)-coset as \(x_1\), proving injectivity.

By definition, $\widehat{\mathcal M}/I\cong(G/I)^s$, and finiteness
follows from \Cref{Rich}.
\end{proof}
For $\mathcal O\in\Gamma$, let $\widehat{\mathcal O}$ denote the
corresponding element of $\widehat\Gamma$, and let
$\mathscr O_{\widehat{\mathcal O}}$ be the corresponding $H$-orbit in
$(G/I)^s$.  Define
\[X_{\mathcal U,{\mathcal O}}
 =\{(sg,xI)\in X_{\mathcal U}:
       xI\in\mathscr O_{\widehat{\mathcal O}}\}.\]
\begin{proposition}\label{Xdec}
\begin{equation}\label{Xdec-eq}
X_{\mathcal U}
 =\coprod_{{\mathcal O}\in\Gamma}
   X_{\mathcal U,{\mathcal O}}             
\end{equation}
is a finite partition into subsets that are open and closed in
\(X_{\mathcal U}\). 
\end{proposition}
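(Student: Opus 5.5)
The plan is to build a map $X_{\mathcal U}\to (G/I)^s$ and pull back the orbit decomposition of $(G/I)^s$ along it. The map is
\[
\rho:X_{\mathcal U}\to G/I,\qquad (sg,xI)\mapsto xI .
\]
First we check that $\rho$ lands in $(G/I)^s$. Take $(sg,xI)\in X_{\mathcal U}$, so $g\in\mathcal U$ and $x^{-1}sgx\in I$. By \Cref{sep}(3), $x^{-1}sx\in I$, which means $xI\in(G/I)^s$. Hence $\rho$ is a well-defined morphism into the closed subvariety $(G/I)^s$; as a morphism it is just the second projection restricted to $X_{\mathcal U}$.

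Next we apply \Cref{Rich} with $A=G$, $Q=I$, $a=s$. This writes $(G/I)^s$ as a finite disjoint union of $Z^0_G(s)=H$-orbits, each both open and closed. \Cref{lem4.2} identifies the set of these orbits with $\widehat\Gamma$, and hence, via the bijection $\Gamma\xrightarrow{\sim}\widehat\Gamma$, with $\Gamma$. So
\[
(G/I)^s=\coprod_{\mathcal O\in\Gamma}\mathscr O_{\widehat{\mathcal O}},
\]
a finite decomposition into open and closed pieces. By definition $X_{\mathcal U,\mathcal O}=\rho^{-1}(\mathscr O_{\widehat{\mathcal O}})$. Since preimages under a continuous map respect disjoint unions and preserve openness and closedness, \eqref{Xdec-eq} follows, with each piece open and closed in $X_{\mathcal U}$. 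Finiteness comes directly from \Cref{lem4.2}.

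The only point needing care is the first step, namely that $\rho$ really factors through $(G/I)^s$; this is exactly where the neighborhood $\mathcal U$ of \Cref{sep}(3) is used. Everything else is formal. One could additionally check that $\rho$ is $H$-equivariant for the action $h\cdot(sg,xI)=(hsgh^{-1},hxI)$, using that $\mathcal U$ is stable under $H$-conjugation and $h$ commutes with $s$. That would show each piece is $H$-stable, but it is not needed for the partition itself.
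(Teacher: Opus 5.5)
Your proposal is correct and follows essentially the same route as the paper. The paper also uses Theorem~\ref{sep}(3) to show that the second projection $X_{\mathcal U}\to (G/I)^s$ is well defined, then pulls back the finite decomposition of $(G/I)^s$ into open and closed $H$-orbits given by Theorem~\ref{Rich}, with these orbits indexed by $\Gamma$ via Lemma~\ref{lem4.2}.
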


\begin{proof}
Let \((sg,xI)\in X_{\mathcal U}\). By definition,
\(x^{-1}sgx\in I\). Hence \(x^{-1}sx\in I\) by \Cref{sep}, so
\(x\in\widehat{\mathcal M}\). Consequently the sets in
\eqref{Xdec-eq} cover
\(X_{\mathcal U}\).

Let
\[
p_2:X_{\mathcal U}\longrightarrow (G/I)^s,
\qquad (sg,xI)\longmapsto xI.
\]
We have
\[X_{\mathcal U,\mathcal O}
   =p_2^{-1}(\mathscr O_{\widehat{\mathcal O}}). \]
By \Cref{Rich}, each
$\mathscr O_{\widehat{\mathcal O}}$ is open and closed in $(G/I)^s$.
Its inverse image under $p_2$ is therefore open and closed in
$X_{\mathcal U}$. This proves this proposition.
\end{proof}

For \(\mathcal O\in\Gamma\), choose representatives
\(x_{\mathcal O}\in\mathcal O\) compatibly with Frobenius which follows from Lang's Lemma, so that
\(F(x_{\mathcal O})=x_{F(\mathcal O)}\).  If
\(\mathcal O\in\Gamma^F\), 
\(x_{\mathcal O}\in\mathcal O^F\).  We write
\begin{align*}
&s_{\mathcal O}=x_{\mathcal O}^{-1}sx_{\mathcal O}\in T,
&T_{\mathcal O}=x_{\mathcal O}Tx_{\mathcal O}^{-1}\subset H,\\
&\bar{T}_{\mathcal O}=x_{\mathcal O}Tx_{\mathcal O}^{-1}/x_{\mathcal O}\tilde{T}x_{\mathcal O}^{-1},
&I_{\mathcal O}
 =H\cap x_{\mathcal O}Ix_{\mathcal O}^{-1}.        
\end{align*}
\begin{remark}
Note $I_{\mathcal O}$ can be seen as Yu type Iwahori subgroup as in \Cref{2.1}, by restricting the Howe data to $Z^0_{\mathbb{G}}(\tilde s)$. 
\end{remark}

Let $\mathcal L_{\mathcal O}$ be the pullback of $\mathcal L$ along
the map
\[
 c_{\mathcal O}:T_{\mathcal O}\longrightarrow T,
 \qquad t\longmapsto x_{\mathcal O}^{-1}tx_{\mathcal O},
\]
and define $\bar{\mathcal L}_{\mathcal O}$ similarly.  Let
\begin{align*}
&\pi_{\mathcal{O}}:Y_{\mathcal{O}}\to H_{\rm{vreg}},\quad &&\eta_{\mathcal{O}}:Y_{\mathcal{O}}\to T_{\mathcal{O},\rm{vreg}},\\
&\psi_{\mathcal{O}}:X'_{\mathcal{O}}\to H,  &&\beta_{\mathcal{O}}:X'_{\mathcal{O}}\to \bar{T}_{\mathcal{O}}.
\end{align*}
be the maps obtained from $(H,T_{\mathcal O},\mathcal L_{\mathcal O})$
in the same way that the maps in \Cref{iw-diag} are obtained from
$(G,T,\mathcal L)$, where $H_{\rm{vreg}}=H\cap G_{\rm{vreg}}$ (see also \Cref{supportH}). Then
\[
 \mathfrak{Ind}^{H}_{T_\mathcal{O}}\mathcal{L}_{\mathcal{O}}
 \cong
 ({\psi_{\mathcal{O}}})_!\beta^*_{\mathcal{O}}
 \bar{\mathcal{L}}_{\mathcal{O}}[\dim H].
\]

Let
\[
 \bar c_{\mathcal O}:\bar T_{\mathcal O}\longrightarrow\bar T,
 \qquad
 \bar t\longmapsto
 \overline{x_{\mathcal O}^{-1}t x_{\mathcal O}}
\]
be the map induced by conjugation, and write
\(\bar s_{\mathcal O}=p(s_{\mathcal O})\). Define
\(\bar{\mathcal L}'_{\mathcal O}\) to be the pullback of
\(\bar{\mathcal L}\) along
\[
 m_{\mathcal O}:\bar T_{\mathcal O}\longrightarrow\bar T,
 \qquad
 \bar t\longmapsto \bar s_{\mathcal O}\bar c_{\mathcal O}(\bar t).
\]
Since \(\bar{\mathcal L}\) is multiplicative, there is a canonical
isomorphism
\begin{equation}\label{Ltw}
 \bar{\mathcal L}'_{\mathcal O}
 \cong
 \bar{\mathcal L}_{\bar s_{\mathcal O}}
 \otimes \bar{\mathcal L}_{\mathcal O},
\end{equation}
where \(\bar{\mathcal L}_{\bar s_{\mathcal O}}\) denotes the constant
rank-one local system on \(\bar T_{\mathcal O}\) with fiber the stalk of
\(\bar{\mathcal L}\) at \(\bar s_{\mathcal O}\).
If \(\mathcal O\in\Gamma^F\), the local systems
$\bar{\mathcal{L}}'_\mathcal{O}$,
$\bar{\mathcal{L}}_{\mathcal{O}}$, and
$\bar{\mathcal{L}}_{\bar{s}_{\mathcal O}}$ admit their natural
Weil structures. Hence
\begin{equation*}
 \chi_{\bar{\mathcal L}'_{\mathcal O}}(t)
 =\chi_{\bar{\mathcal L}}(\bar{s}_{\mathcal O})
  \chi_{\bar{\mathcal L}_{\mathcal O}}(t)
 \qquad t\in \bar{T}_{\mathcal O}^F.
\end{equation*}
Let ${\mathcal{L}}'_{\mathcal{O}}$ be the pullback of
$\bar{\mathcal L}'_{\mathcal O}$ along the quotient map
${T}_{\mathcal{O}}\to \bar{T}_{\mathcal{O}}$. Similarly, we have
\begin{equation*}
 \chi_{{\mathcal L}'_{\mathcal O}}(t)
 =\chi_{{\mathcal L}}(s_{\mathcal O})
  \chi_{{\mathcal L}_{\mathcal O}}(t)
 \qquad t\in {T}_{\mathcal O}^F.
\end{equation*}
Consider also the twisted incidence variety
\[
Y'_{\mathcal O}
=\{(g,zT_{\mathcal O})\in s^{-1}H_{\rm{vreg}}\times H/T_{\mathcal{O}}:
   z^{-1}gz\in T_{\mathcal O}\}.
\]
Define maps
\begin{align}
\pi'_{\mathcal{O}}:Y'_{\mathcal{O}}&\longrightarrow
s^{-1}H_{\rm{vreg}},
& (g,zT_{\mathcal O})&\longmapsto g,\notag\\
\eta'_{\mathcal{O}}:Y'_{\mathcal{O}}&\longrightarrow
s^{-1}T_{\mathcal{O},\rm{vreg}},
& (g,zT_{\mathcal O})&\longmapsto z^{-1}gz.
\end{align}

Consider the open subset of $X'_{\mathcal O}$ given by
 \[
X'_{\mathcal U,\mathcal O}
 =\{(g,zI_{\mathcal O})\in
       \mathcal U\times H/I_{\mathcal O}:
       z^{-1}gz\in I_{\mathcal O}\}\subset X'_{\mathcal{O}}.  
\]
The group \(I_{\mathcal O}\) is the stabilizer of
\(x_{\mathcal O}I\) in \(H\), hence
\[
\theta_{\mathcal O}:H/I_{\mathcal O}\xrightarrow{\sim}
\mathscr O_{\widehat{\mathcal O}},
\qquad zI_{\mathcal O}\longmapsto zx_{\mathcal O}I.
\]
We obtain the following comparison.
\begin{proposition}\label{Xloc}
The map
\begin{equation}
\Phi_{\mathcal O}:X'_{\mathcal U,\mathcal O}
 \xrightarrow{\sim}X_{\mathcal U,{\mathcal O}},
\qquad
(g,zI_{\mathcal O})\longmapsto
(sg,zx_{\mathcal O}I).                               
\end{equation}
is an isomorphism.
\end{proposition}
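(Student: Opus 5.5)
We will check in turn that $\Phi_{\mathcal O}$ is well defined, that it is a bijection onto $X_{\mathcal U,\mathcal O}$, and that its inverse is a morphism.

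\emph{Well defined.} Let $(g,zI_{\mathcal O})\in X'_{\mathcal U,\mathcal O}$, so $g\in\mathcal U$, $z\in H$ and $z^{-1}gz\in I_{\mathcal O}\subset x_{\mathcal O}Ix_{\mathcal O}^{-1}$. Since $s$ is central in $H$, we have $z^{-1}sz=s$. Hence
\[
(zx_{\mathcal O})^{-1}(sg)(zx_{\mathcal O})=x_{\mathcal O}^{-1}s\,(z^{-1}gz)\,x_{\mathcal O}=s_{\mathcal O}\cdot x_{\mathcal O}^{-1}(z^{-1}gz)x_{\mathcal O}\in T\cdot I=I,
\]
using $s_{\mathcal O}\in T\subset I$. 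The point $zx_{\mathcal O}I=\theta_{\mathcal O}(zI_{\mathcal O})$ lies in $\mathscr O_{\widehat{\mathcal O}}$, so the image lies in $X_{\mathcal U,\mathcal O}$. The image does not depend on the representative $z$, because $I_{\mathcal O}$ stabilizes $x_{\mathcal O}I$. Thus $\Phi_{\mathcal O}$ is the restriction of the morphism $\mathrm{id}\times\theta_{\mathcal O}$ composed with left translation by $s$.

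\emph{Bijectivity.} Take $(sg,yI)\in X_{\mathcal U,\mathcal O}$. Since $yI\in\mathscr O_{\widehat{\mathcal O}}$, there is $z\in H$ with $yI=zx_{\mathcal O}I$, and $zI_{\mathcal O}$ is unique because $\theta_{\mathcal O}$ is an isomorphism. Also $g$ is determined by $sg$. This gives injectivity, and for surjectivity it remains to show $z^{-1}gz\in I_{\mathcal O}$. We know $z^{-1}gz\in H$, since $z,g\in H$. Moreover
\[
x_{\mathcal O}^{-1}(z^{-1}gz)x_{\mathcal O}=s_{\mathcal O}^{-1}\cdot(zx_{\mathcal O})^{-1}sg(zx_{\mathcal O})\in T^{-1}I=I .
\]
Therefore $z^{-1}gz\in H\cap x_{\mathcal O}Ix_{\mathcal O}^{-1}=I_{\mathcal O}$, as required. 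So $\Phi_{\mathcal O}$ is bijective. Note that \Cref{sep}(3) is not needed here: the orbit condition already places $y$ in $Hx_{\mathcal O}I$.

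\emph{Isomorphism.} The inverse is $(g',yI)\mapsto(s^{-1}g',\theta_{\mathcal O}^{-1}(yI))$. Both components are morphisms: the first is translation, and the second is the inverse of the isomorphism $\theta_{\mathcal O}$ from the orbit map, which \Cref{Rich} provides. Hence $\Phi_{\mathcal O}$ is an isomorphism of varieties.

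The main obstacle is this last step, namely that $\theta_{\mathcal O}$ is an isomorphism rather than merely a bijective morphism. In characteristic $p$ this needs separability of the orbit map, or we must work in the perfect setting, where bijective morphisms between the relevant perfect schemes are handled as in \cite{CI_MPDL}. I would invoke the stated isomorphism $\theta_{\mathcal O}$ as given, note that the whole situation is perfectly smooth, and conclude.
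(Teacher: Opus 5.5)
Your argument is correct and is essentially the paper's proof: the same factorization $x_{\mathcal O}^{-1}z^{-1}(sg)zx_{\mathcal O}=s_{\mathcal O}\cdot x_{\mathcal O}^{-1}z^{-1}gzx_{\mathcal O}$ with $s_{\mathcal O}\in T\subset I$ shows that the two incidence conditions are equivalent, and the inverse is the same map $(y,xI)\mapsto(s^{-1}y,\theta_{\mathcal O}^{-1}(xI))$. Your caveat that $\theta_{\mathcal O}$ must be an isomorphism, not merely a bijective morphism, is a point the paper simply takes for granted; it is harmless in the perfect setting of $G_r$, because there a bijective equivariant map of homogeneous spaces is a universal homeomorphism and hence an isomorphism of perfect schemes.
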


\begin{proof}
The formula is independent of the representative of \(zI_{\mathcal O}\)
because \(I_{\mathcal O}\) is the stabilizer of
\(x_{\mathcal O}I\). Write
\[
a=x_{\mathcal O}^{-1}sx_{\mathcal O}\in T\subset I,
\qquad
b=x_{\mathcal O}^{-1}z^{-1}gzx_{\mathcal O}.
\]
Since \(z\in H\), one has
\[
x_{\mathcal O}^{-1}z^{-1}(sg)zx_{\mathcal O}=ab.    
\]

$ab$ belongs to \(I\) if and only if
\(b\in I\). On the other hand, \(z^{-1}gz\in H\), and hence
\begin{equation}
b\in I
\quad\Longleftrightarrow\quad
z^{-1}gz\in H\cap x_{\mathcal O}Ix_{\mathcal O}^{-1}
 =I_{\mathcal O}.\label{inc}
\end{equation}

Thus \Cref{inc} shows both that
\(\Phi_{\mathcal O}\) is well
defined and that the two incidence conditions are equivalent.

An explicit inverse of $\Phi_{\mathcal O}$ is given by
\[
X_{\mathcal U,{\mathcal O}}\longrightarrow
X'_{\mathcal U,\mathcal O},
\qquad
(y,xI)\longmapsto
\bigl(s^{-1}y,\theta_{\mathcal O}^{-1}(xI)\bigr).   
\]
\end{proof}

\subsection{Analysis of the variety $Y$}
Recall that \(G_{\mathrm{vreg}}\) is the very regular locus in $G$ and that
\[
  Y
 =\{(y,xT)\in G_{\mathrm{vreg}}\times G/T:x^{-1}yx\in T\},
 \qquad \pi(y,xT)=y.
\]
Similarly, set
\[
Y_{\mathcal U}=\pi^{-1}(H_{\mathrm{vreg}}\cap s\mathcal{U})
\]
and
\[
Y_{\mathcal U,\mathcal O}
=\{(y,xT)\in Y_{\mathcal U}:x\in\mathcal O\},
\]
and define
\[
Y'_{\mathcal U,\mathcal O}
=\{(g,zT_{\mathcal O})\in Y'_{\mathcal O}:g\in\mathcal U\}.
\]
   
Let \((sg,xT)\in Y_{\mathcal U}\). \Cref{U} gives
\(x^{-1}sx\in T\), hence \(x\in\mathcal M\). We therefore obtain a
partition
\[
Y_{\mathcal U}
 =\bigcup_{{\mathcal O}\in\Gamma}
   Y_{\mathcal U,{\mathcal O}}.
\]
These subvarieties are related as follows.
\begin{proposition}\label{Yloc}
For \(\mathcal O\in\Gamma\), there is an isomorphism
\begin{equation}\label{Yloc-eq}
\Lambda_{\mathcal O}:
Y'_{\mathcal U,\mathcal O}\longrightarrow
Y_{\mathcal U,\mathcal O},
\qquad
(g,zT_{\mathcal O})\longmapsto
(sg,zx_{\mathcal O}T).                              
\end{equation}
\end{proposition}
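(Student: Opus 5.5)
The plan is to imitate the proof of \Cref{Xloc}, with \(T\) in place of \(I\). First I check that the formula \(\Lambda_{\mathcal O}\) is well defined. If \(z\) is replaced by \(zt\) with \(t\in T_{\mathcal O}=x_{\mathcal O}Tx_{\mathcal O}^{-1}\), then \(ztx_{\mathcal O}T=zx_{\mathcal O}(x_{\mathcal O}^{-1}tx_{\mathcal O})T=zx_{\mathcal O}T\). Hence the class \(zx_{\mathcal O}T\) depends only on \(zT_{\mathcal O}\).

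Next I verify that \(\Lambda_{\mathcal O}\) lands in \(Y_{\mathcal U,\mathcal O}\). Let \((g,zT_{\mathcal O})\in Y'_{\mathcal U,\mathcal O}\) and put \(a=x_{\mathcal O}^{-1}sx_{\mathcal O}=s_{\mathcal O}\in T\) and \(b=x_{\mathcal O}^{-1}z^{-1}gzx_{\mathcal O}\). Since \(z\in H\) commutes with \(s\), we have \((zx_{\mathcal O})^{-1}(sg)(zx_{\mathcal O})=ab\). The condition \(z^{-1}gz\in T_{\mathcal O}\) is equivalent to \(b\in T\), and because \(a\in T\) this is equivalent to \(ab\in T\). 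For the ambient condition I read the very regularity requirement in \(Y'_{\mathcal O}\) as saying that \(sg\in H_{\mathrm{vreg}}\); I take \(s^{-1}H_{\rm vreg}\) to encode exactly this. Together with \(g\in\mathcal U\), this gives \(sg\in H_{\mathrm{vreg}}\cap s\mathcal U\). Finally \(zx_{\mathcal O}\in H\mathcal O=\mathcal O\), since \(\mathcal O\) is an \((H,T)\)-double coset. Hence \((sg,zx_{\mathcal O}T)\in Y_{\mathcal U,\mathcal O}\).

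For the inverse, take \((y,xT)\in Y_{\mathcal U,\mathcal O}\). Since \(x\in\mathcal O=Hx_{\mathcal O}T\), we may write \(x=zx_{\mathcal O}t\) with \(z\in H\) and \(t\in T\). The natural inverse is \((y,xT)\mapsto(s^{-1}y,zT_{\mathcal O})\). This is the step I expect to be the main obstacle: showing that \(zT_{\mathcal O}\) is independent of the decomposition, and that the map is a morphism rather than just a bijection on points. Suppose \(zx_{\mathcal O}t=z'x_{\mathcal O}t'\). Then \(z^{-1}z'=x_{\mathcal O}t(t')^{-1}x_{\mathcal O}^{-1}\in H\cap T_{\mathcal O}\), and \(T_{\mathcal O}\subset H\) by \Cref{lem4.2}. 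Hence \(z'T_{\mathcal O}=zT_{\mathcal O}\).

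To get a morphism, I identify the orbit map \(H/T_{\mathcal O}\to \mathcal O/T\), \(zT_{\mathcal O}\mapsto zx_{\mathcal O}T\). Its image is the \(H\)-orbit of \(x_{\mathcal O}T\) in \((G/T)^s\), and its stabilizer is \(H\cap x_{\mathcal O}Tx_{\mathcal O}^{-1}=T_{\mathcal O}\). So it is a bijective orbit map onto an orbit which, by \Cref{Rich} applied to \(A=G\), \(Q=T\), \(a=s\), is open and closed in \((G/T)^s\). I would argue that this bijection is an isomorphism, exactly as is implicitly done for \(\theta_{\mathcal O}\) in \Cref{Xloc}; in the perfect setting one works up to universal homeomorphism, which suffices for sheaf-theoretic purposes.

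Given this, \(\Lambda_{\mathcal O}\) is the restriction of the isomorphism \((g,zT_{\mathcal O})\mapsto(sg,zx_{\mathcal O}T)\) of \(s^{-1}H\times H/T_{\mathcal O}\) onto \(H\times(H x_{\mathcal O}T/T)\). This product map carries the closed incidence conditions and the open conditions onto each other, by the equivalences above. It therefore restricts to an isomorphism \(Y'_{\mathcal U,\mathcal O}\xrightarrow{\sim}Y_{\mathcal U,\mathcal O}\).
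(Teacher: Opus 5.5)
Your proof is correct. Its skeleton is the paper's: well-definedness because $T_{\mathcal O}$ stabilizes $x_{\mathcal O}T$, the identity $(zx_{\mathcal O})^{-1}(sg)(zx_{\mathcal O})=s_{\mathcal O}\,x_{\mathcal O}^{-1}z^{-1}gzx_{\mathcal O}$, and the inverse built from a decomposition $x=zx_{\mathcal O}t$.

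The one real difference is in the converse incidence check. The paper applies \Cref{sep}(2) to get $x^{-1}g_sx\in T$, and then uses $(sg)_u=g_u$ to get $x^{-1}g_ux\in T$. You instead note that $x\in\mathcal O\subset\mathcal M$ already gives $x^{-1}sx=t^{-1}s_{\mathcal O}t=s_{\mathcal O}\in T$. Hence $x^{-1}gx=(x^{-1}sx)^{-1}(x^{-1}sgx)\in T$, which is exactly the device the paper itself uses in the proof of \Cref{Xloc}. Your route is shorter. It also shows that the separation property of $\mathcal U$ is needed only for the covering $Y_{\mathcal U}=\bigcup_{\mathcal O}Y_{\mathcal U,\mathcal O}$, where it puts $x$ in $\mathcal M$, and not for the local isomorphism itself. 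The paper's incidence argument ignores the information $x\in\mathcal O$ and so re-invokes that property.

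You also check that $zT_{\mathcal O}$ is independent of the decomposition, and you ask whether the bijection is a morphism; the paper leaves both implicit. Your hedge there can be sharpened. The bijective orbit map $H/T_{\mathcal O}\to\mathcal O/T$ onto a smooth homogeneous target is finite and purely inseparable, hence a universal homeomorphism. Universal homeomorphisms between perfect schemes are isomorphisms, so in this setting you get an honest isomorphism.

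One small correction: $T_{\mathcal O}\subset H$ is not the content of \Cref{lem4.2}. It holds because $T_{\mathcal O}$ is connected, commutative, and contains $s=x_{\mathcal O}s_{\mathcal O}x_{\mathcal O}^{-1}$. The paper records this where $T_{\mathcal O}$ is defined.
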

\begin{proof}
The displayed formula is well defined because $T_{\mathcal O}$ is the
stabilizer of $x_{\mathcal O}T$ in $H$. If
$(g,zT_{\mathcal O})\in Y'_{\mathcal U,\mathcal O}$, then
\[
 x_{\mathcal O}^{-1}z^{-1}(sg)zx_{\mathcal O}
 =s_{\mathcal O}\,x_{\mathcal O}^{-1}z^{-1}gzx_{\mathcal O}\in T,
\]
so the image lies in $Y_{\mathcal U,\mathcal O}$.

Conversely, let $(sg,xT)\in Y_{\mathcal U,\mathcal O}$ and write
$x=zx_{\mathcal O}t$ with $z\in H$ and $t\in T$. Then
$xT=zx_{\mathcal O}T$. By \Cref{sep},
$x^{-1}g_sx\in T$. Moreover, $(sg)_u=g_u$, and hence
\[
 x^{-1}g_ux=(x^{-1}sgx)_u\in T.
\]
Thus $x^{-1}gx\in T$, or equivalently
$z^{-1}gz\in T_{\mathcal O}$. This gives an inverse to
\eqref{Yloc-eq}.
\end{proof}

For every \(\mathcal O\in\Gamma\), set
\(
R_{\mathcal O}
 =\{t\in T_{\mathcal O}\cap\mathcal U:st\in G_{\mathrm{vreg}}\}.   
\)
Then \(R_{\mathcal O}\) is an open dense subset of
\(T_{\mathcal O}\cap\mathcal U\).
By definition, we have an isomorphism
\begin{equation*}
Y'_{\mathcal U,\mathcal O}
 \cong H\times^{T_{\mathcal O}}R_{\mathcal O}.
\end{equation*}
where \(T_{\mathcal O}\) acts on $R_{\mathcal O}$ by conjugation. Hence
$Y'_{\mathcal U,\mathcal O}$ is smooth and irreducible of dimension
$\dim H$. Similarly,
\[
X'_{\mathcal U,\mathcal O}
 \cong H\times^{I_{\mathcal O}}
       (I_{\mathcal O}\cap\mathcal U).
\]
is smooth, irreducible, and of dimension \(\dim H\).

These identifications fit into the following Cartesian diagram:
\begin{equation}
\begin{tikzcd}
    Y'_{\mathcal{U},\mathcal{O}} \arrow[r,"\sim" sloped, "\Lambda_{\mathcal{O}}"'] \arrow[hookrightarrow, d, "\iota_{\mathcal O}"'] &
    Y_{\mathcal{U},\mathcal{O}} \arrow[hookrightarrow, d] \\
    X'_{\mathcal{U},\mathcal{O}} \arrow[r, "\sim" sloped, "\Phi_{\mathcal{O}}"'] &
    X_{\mathcal{U},\mathcal{O}}
\end{tikzcd}
\end{equation}
The left vertical arrow is
\begin{equation*}
\iota_{\mathcal O}:
Y'_{\mathcal U,\mathcal O}\longrightarrow
X'_{\mathcal U,\mathcal O},
\qquad(g,zT_{\mathcal O})\longmapsto
(g,zI_{\mathcal O}).
\end{equation*}
Under $\Lambda_{\mathcal O}$ and $\Phi_{\mathcal O}$,
$Y'_{\mathcal U,\mathcal O}$ is identified both with
$Y_{\mathcal U,\mathcal O}$ and with a nonempty open dense subset of
$X'_{\mathcal U,\mathcal O}$. Moreover,
\[
 Y_{\mathcal U,\mathcal O}
 =Y_{\mathcal U}\cap X_{\mathcal U,\mathcal O}.
\]
Since the subsets $X_{\mathcal U,\mathcal O}$ are open and closed by
\Cref{Xdec}, the subsets
$Y_{\mathcal U,\mathcal O}$ are
also open and closed. Consequently,
\begin{equation}\label{Ydec}
Y_{\mathcal U}
 =\coprod_{{\mathcal O}\in\Gamma}
   Y_{\mathcal U,{\mathcal O}}
\end{equation}
is the decomposition of $Y_{\mathcal U}$ into irreducible components.

\begin{proposition}
For every $\mathcal O\in\Gamma$,
\[
 \pi(Y_{\mathcal U,\mathcal O})
 =H_{\mathrm{vreg}}\cap s\mathcal U
\]
and
\[
 \pi'_{\mathcal O}(Y'_{\mathcal U,\mathcal O})
 =s^{-1}H_{\mathrm{vreg}}\cap\mathcal U.
\]
\end{proposition}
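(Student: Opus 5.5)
The plan is to prove the statement about $\pi'_{\mathcal O}$ first, and then transport it to $\pi$ using $\Lambda_{\mathcal O}$ from \Cref{Yloc}. Since $\pi\circ\Lambda_{\mathcal O}(g,zT_{\mathcal O})=sg=s\,\pi'_{\mathcal O}(g,zT_{\mathcal O})$ and $\Lambda_{\mathcal O}$ is an isomorphism onto $Y_{\mathcal U,\mathcal O}$, we get
\[
\pi(Y_{\mathcal U,\mathcal O})=s\,\pi'_{\mathcal O}(Y'_{\mathcal U,\mathcal O}).
\]
The first equality is then the second one multiplied by $s$. Here we use that $s\in Z(H)$, so $s\,(s^{-1}H_{\mathrm{vreg}}\cap\mathcal U)=H_{\mathrm{vreg}}\cap s\mathcal U$.

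For the second equality, the inclusion $\subseteq$ is immediate from the definitions: a point of $Y'_{\mathcal U,\mathcal O}$ is a pair $(g,zT_{\mathcal O})$ with $g\in s^{-1}H_{\mathrm{vreg}}$ and $g\in\mathcal U$.

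For $\supseteq$, take $g\in\mathcal U$ with $sg\in H_{\mathrm{vreg}}$. We must find $z\in H$ with $z^{-1}gz\in T_{\mathcal O}$.
\begin{enumerate}
\item Since $sg\in G_{\mathrm{vreg}}$, it lies in a maximal torus $T'$ of $G$ that is conjugate to $T$; this is part of the very regular definition, via the centralizer torus. Write $sg=y$ and choose $x\in G$ with $x^{-1}yx\in T$, so that $(y,xT)\in Y_{\mathcal U}$.
\item By \Cref{sep}(2), $x^{-1}sx\in T$, so $x\in\mathcal M$. Also $x^{-1}g_sx\in T$, and, as in the proof of \Cref{Yloc}, $x^{-1}gx\in T$.
\item The torus $xTx^{-1}$ contains $s$. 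Being commutative, it centralizes $s$, and being connected, it lies in $H=Z^0_G(s)$. Thus $xTx^{-1}$ and $T_{\mathcal O}=x_{\mathcal O}Tx_{\mathcal O}^{-1}$ are both "maximal tori" of $H$ in the Moy--Prasad sense, and both contain $g$ after conjugation.
\end{enumerate}

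The real issue is that $x$ may lie in a different double coset $\mathcal O'\in\Gamma$, whereas we need one in $\mathcal O$. Equivalently, we must show that every $T'\subset H$ of the form $xTx^{-1}$ with $x\in\mathcal M$ is $H$-conjugate to $T_{\mathcal O}$ by an element carrying $g$ into $T_{\mathcal O}$.

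My approach to this obstacle is to use that $H=Z^0_G(s)$ is itself a Moy--Prasad quotient of the connected reductive group $Z^0_{\mathbb G}(\tilde s)$, as in \Cref{supportH}. Its "tori" $x'Tx'^{-1}$ (for $x'\in\mathcal M$) are all $H$-conjugate, being images of maximal tori of $Z^0_{\mathbb G}(\tilde s)$ containing $\mathbf x$ in their apartments, which are conjugate under the parahoric.

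Concretely, I would argue as follows. Since $H_{\mathrm{vreg}}$ is the very regular locus for $H$ and $\pi_H$ is a $W_H$-torsor over it (\Cref{supportH}), $sg$ lies in a unique torus $T''=Z_H(sg)^0$ of $H$. Conjugating $T''$ to $T_{\mathcal O}$ by some $z\in H$ then gives $z^{-1}(sg)z\in T_{\mathcal O}$. Since $z$ commutes with $s$ and $s\in T_{\mathcal O}$, this yields $z^{-1}gz\in T_{\mathcal O}$, i.e. $(g,zT_{\mathcal O})\in Y'_{\mathcal U,\mathcal O}$ with $\pi'_{\mathcal O}(g,zT_{\mathcal O})=g$.

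The step I expect to need care is the conjugacy of $T''$ to $T_{\mathcal O}$ inside $H$ rather than merely inside $G$. I would deduce it from surjectivity of $\pi_H$ onto $H_{\mathrm{vreg}}$, applied with the torus $T_{\mathcal O}$ in place of $T$ in \Cref{supportH}.
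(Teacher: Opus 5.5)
Your reduction of the first equality to the second via $\Lambda_{\mathcal O}$, the easy inclusion, and your steps 1--3 match the paper's proof. In those steps you conjugate $sg$ into $T$ by some $x$, use \Cref{sep} to get $x^{-1}sx\in T$, and conclude that $xTx^{-1}\subseteq H$ contains $g$.

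The routes diverge exactly at the crux you isolate: showing that $xTx^{-1}$ and $T_{\mathcal O}$ are conjugate under $H$. You get this by citing the $W_H$-torsor claim of \Cref{supportH} for $(H,T_{\mathcal O})$. That citation is legitimate within the paper, but be aware of what it hides. Surjectivity of $\pi_H$ on $H_{\mathrm{vreg}}\cap s\mathcal U$ is equivalent to the statement being proved, and the remark is asserted without proof; it rests on a lift $\tilde s$ and on Bruhat--Tits theory for $Z^0_{\mathbb G}(\tilde s)$. So your argument is really a reduction to the remark rather than a proof of the crux.

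Your other justification also has a hole. It needs every $x'Tx'^{-1}\subseteq H$ to be the image of a maximal torus of $Z^0_{\mathbb G}(\tilde s)$ for one fixed lift $\tilde s$. That requires a lifting argument you do not give, since $x'^{-1}sx'\in T$ is only a congruence condition.

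The paper instead proves the conjugacy intrinsically at finite level:
\begin{enumerate}
\item The maximal tori $S_0\subset xTx^{-1}$ and $T_{\mathcal O,0}\subset T_{\mathcal O}$ are maximal tori of $G$ lying in $H$, hence maximal tori of the connected group $H$.
\item So $z^{-1}S_0z=T_{\mathcal O,0}$ for some $z\in H$.
\item Taking centralizers in $G$, which recover $xTx^{-1}$ and $T_{\mathcal O}$ from their maximal tori, gives $z^{-1}(xTx^{-1})z=T_{\mathcal O}$, hence $z^{-1}gz\in T_{\mathcal O}$.
\end{enumerate}
This uses only the torsor property for $G$ itself, \Cref{sep}, and conjugacy of maximal tori in a connected group. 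In effect it proves the surjectivity part of \Cref{supportH} near $s$. Your steps 1--3 were one such observation away from a self-contained proof.
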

\begin{proof}
The inclusions from left to right follow from the definitions. Fix
$g\in s^{-1}H_{\mathrm{vreg}}\cap\mathcal U$ and set $y=sg$, so $y$ is very
regular in $G$. There exists $a\in G$ such that $a^{-1}ya\in T$. By \Cref{sep},
$a^{-1}sa\in T$ as well. Consequently
$S:=aTa^{-1}$ contains both $s$ and $g=s^{-1}y$, hence $S\subseteq Z_G^0(s)=H$.

Note that $S$ and $T_{\mathcal O}$ are conjugate under $H$.
Let $S_0$ and $T_{\mathcal O,0}$ be their maximal tori.
These are maximal tori contained in $H$, hence are maximal
tori of the connected group $H$. There exists $z\in H$ with
$z^{-1}S_0z=T_{\mathcal O,0}$. 
Taking centralizer on both sides yields
$z^{-1}Sz=T_{\mathcal O}$. In particular, $z^{-1}gz\in T_{\mathcal O}$.
Thus $(g,zT_{\mathcal O})\in Y'_{\mathcal U,\mathcal O}$, proving
surjectivity of $\pi'_{\mathcal O}$ onto
$s^{-1}H_{\mathrm{vreg}}\cap\mathcal U$. Finally, $\pi\circ\Lambda_{\mathcal O}
=\iota_s\circ\pi'_{\mathcal O}$, so this also holds for $\pi$. 
\end{proof}

\section{Characteristic Functions of Character Sheaves}
\label{char}
\subsection{Generalized deep level Green functions}
Recall the definition of $\mathfrak{Ind}^{G}_{T}\mathcal{L}$ from
\Cref{tor}. We define the generalized deep level Green function
\[
 \mathcal{Q}_{T,G}^{\mathcal L+}:
 \{\text{unipotent elements of }G^F\}
 \longrightarrow\overline{\mathbb Q}_\ell
\]
by
\[
 \mathcal{Q}_{T,G}^{\mathcal L+}(u)
 :=\chi_{\mathfrak{Ind}^{G}_{T}\mathcal L}(u).
\]

Using the decomposition $T=T_0\times T^{0+}$, we correspondingly write
$\mathcal{L}=\mathcal{L}_0\boxtimes\mathcal{L}^{0+}$. We likewise write
$\bar T=T_0\times\bar T^{0+}$, where
$\bar T^{0+}=T^{0+}/\tilde T$.
\begin{proposition}\label{Qpos}
The function $\mathcal{Q}_{T,G}^{\mathcal{L}+}$ depends only on
$\mathcal{L}^{0+}$. In other words, it is independent of the choice of
$\mathcal{L}_0$.
\end{proposition}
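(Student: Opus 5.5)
Our plan is to compute the stalks at a unipotent element $u\in G^F$ through the Yu-type model of \Cref{iw}, $\mathfrak{Ind}^{G}_{T}\mathcal L\cong\psi_!\beta^*\bar{\mathcal L}[\dim G]$. By proper base change, the stalk at $u$ is $R\Gamma_c\bigl(\psi^{-1}(u),\beta^*\bar{\mathcal L}\bigr)[\dim G]$, where $\psi^{-1}(u)=\{xI\in G/I: x^{-1}ux\in I\}$. The Frobenius trace on this stalk is obtained by the Grothendieck trace formula, and the Weil structure is the one fixed in \Cref{iw}. The key observation will be that $\beta$, restricted to $\psi^{-1}(u)$, takes values in a coset of the subgroup $\bar T^{0+}$ of $\bar T=T_0\times\bar T^{0+}$, and in fact lies inside $\{1\}\times\bar T^{0+}$.

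To justify this, note that $x^{-1}ux$ is a unipotent element of $I$. Under the projection $I\twoheadrightarrow\bar T\twoheadrightarrow T_0$, it maps to a unipotent element of the torus $T_0$, hence to $1$. Concretely, $p_I$ composed with the projection to $T_0$ factors through the reduction $I\to I_0\to T_0$, which is a group homomorphism with unipotent kernel onto a torus, and unipotent elements of a torus are trivial. Consequently $\beta|_{\psi^{-1}(u)}$ factors through the closed embedding $\bar T^{0+}\hookrightarrow\bar T$, $t\mapsto(1,t)$. The restriction of $\bar{\mathcal L}=\bar{\mathcal L}_0\boxtimes\bar{\mathcal L}^{0+}$ along this embedding is $(\bar{\mathcal L}_0)_1\otimes\bar{\mathcal L}^{0+}$. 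The stalk $(\bar{\mathcal L}_0)_1$ is canonically $\overline{\mathbb Q}_\ell$ with trivial Frobenius action, because a multiplicative local system is rigidified at the identity. It follows that $\beta^*\bar{\mathcal L}|_{\psi^{-1}(u)}$ depends, together with its Weil structure, only on $\mathcal L^{0+}$. Therefore the whole stalk complex with its Frobenius, and hence $\mathcal Q^{\mathcal L+}_{T,G}(u)$, depends only on $\mathcal L^{0+}$.

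The main obstacle is compatibility of Weil structures. The Weil structure on $\mathfrak{Ind}^G_T\mathcal L$ is defined through the intermediate extension from $G_{\rm vreg}$ and then transported through the isomorphism in \Cref{iw}. We must check that this transported structure agrees with the naive one on $\psi_!\beta^*\bar{\mathcal L}$ coming from the Weil structure of $\bar{\mathcal L}$. We would verify this on the dense open set $G_{\rm vreg}$, where both structures come from $\eta^*\mathcal L$ via the isomorphism $q$ of diagram \eqref{iw-diag}. We would then use that an endomorphism of a simple perverse summand is determined by its restriction to a dense open subset, applied summand by summand, or equivalently that $\mathrm{Hom}$ of intermediate extensions injects into $\mathrm{Hom}$ on the open set. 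A secondary point to check is that the homomorphism $I\to T_0$ is really trivial on $\tilde T$, so that the splitting $\bar T=T_0\times\bar T^{0+}$ is compatible with $p_I$. This should follow from the definition of $T_{\phi,r}$, since every factor of $T_{\phi,r}$ has depth $>0$.
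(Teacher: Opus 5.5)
Your argument is essentially the paper's proof. You compute the stalk at $u$ as $R\Gamma_c(\psi^{-1}(u),\beta^*\bar{\mathcal L})[\dim G]$ in the Yu-type model, and note that $x^{-1}ux$ being unipotent forces $\beta(\psi^{-1}(u))\subseteq\{1\}\times\bar T^{0+}$. Your extra checks are correct: the rigidification of $\bar{\mathcal L}_0$ at $1$, and the agreement of the two Weil structures via full faithfulness of $j_{!*}$ from $G_{\rm vreg}$. They only spell out what the paper takes for granted in Theorem~\ref{iw}.

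There is one step you leave implicit, and the argument needs it. $I=\mathcal I_{\phi,U,r}$ and $\tilde T=T_{\phi,r}$ are built from a Howe factorization of $\phi$. So when you compare $\mathcal L$ with some $\mathcal L'$ having the same $\mathcal L^{0+}$ but a different $\mathcal L_0$, you must know that both are computed on the same $X$, the same fiber $\psi^{-1}(u)$ and the same $\bar T$. Otherwise "depends only on $\mathcal L^{0+}$" does not follow from your fiberwise observation. The paper supplies this in Remark~\ref{changehowe}. If $\phi'|_{T^{0+}}=\phi|_{T^{0+}}$, a Howe factorization of $\phi'$ is obtained by changing only the depth-zero factor $\phi_{-1}$. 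Hence the $\mathbb G^i$ and $r_i$, and therefore $I$ and $\tilde T$, are unchanged. Also $\mathcal L'$ still descends to $\bar T$, because $\tilde T\subseteq T^{0+}$. With this added, your proof is complete.
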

\begin{proof}
The argument is similar to \cite[8.3.2]{Lus_Ch2}. We use the realization in \Cref{iw-sec}:
\[
 \mathfrak{Ind}^{G}_{T}\mathcal L
 \cong \psi_!\beta^*\bar{\mathcal L}[\dim G].
\]
Let $u\in G^F$ be unipotent. The stalk formula for compactly supported
pushforward identifies the stalk at $u$ with
\[
 R\Gamma_c\bigl(\psi^{-1}(u),
                  \beta^*\bar{\mathcal L}\bigr)[\dim G].
\]
For $(u,xI)\in\psi^{-1}(u)$, the element $x^{-1}ux\in I$ is
unipotent.  Since the quotient map $p_I:I\twoheadrightarrow\bar T$
preserves Jordan decomposition, $\beta(u,xI)=p_I(x^{-1}ux)$ is a
unipotent element of $\bar T$. Consequently
\[
 \beta\bigl(\psi^{-1}(u)\bigr)\subseteq \bar T^{0+}.
\]
The Weil complex
$\mathfrak{Ind}^{G}_{T}\mathcal L|_{G_{\mathrm{unip}}}$ is only determined by
$\mathcal L|_{T^{0+}}$.
\end{proof}
\begin{remark}\label{changehowe}
    Let $\phi'$ be another character such that $\phi_{|{T^{0+}}}=\phi'_{|{T^{0+}}}$ For a Howe factorization $(\mathbb{G}^i,\phi_i,r_i)_{-1\leq i\leq n}$ of $\phi$, we only need to replace $\phi_{-1}$ by $\phi'_{-1}$ which is another charatcer of $T$ of depth $0$ and take $\phi_{i}=\phi'_{i}\quad 0\leq i$. Then $(\mathbb{G}^i,\phi'_i,r_i)_{-1\leq i\leq n}$ is a Howe factorization of $\phi'$. Hence in the proof of \Cref{Qpos}, we don't need consider the change of Iwahori type group $I$.
\end{remark}
\subsection{Deep level Lusztig's comparison theorem}
By \Cref{Yloc,Ydec}, we have the
commutative diagram
\begin{equation}
\begin{tikzcd}
     \coprod_{\mathcal{O}\in \Gamma}Y'_{\mathcal{U},\mathcal{O}} \arrow[r,"\sim" sloped] \arrow[d,"\coprod_{\mathcal{O}\in \Gamma}{\pi'_{\mathcal{O}}}"] &
   Y_{\mathcal{U}}\arrow[d, "\pi"] \\
     s^{-1}H_{\rm{vreg}}\cap \mathcal{U} \arrow[r, "\sim" sloped, "\iota_s"'] &
    H_{\rm{vreg}}\cap s\mathcal{U}
\end{tikzcd}
\end{equation}
where the upper isomorphism is the disjoint union of the maps
$\Lambda_{\mathcal O}$, and
\[
\iota_s(g)=sg.
\]

The construction is Frobenius equivariant: Frobenius carries the diagram
indexed by $\mathcal O$ to the one indexed by $F(\mathcal O)$.  It follows
that we have a canonical isomorphism of local systems over
$s^{-1}H_{\rm{vreg}}\cap \mathcal{U}$:

\begin{equation}\label{loc0}
 (\iota_s)^*\left(
   \left.\pi_!\eta^*\mathcal L_{\rm vreg}
   \right|_{H_{\rm vreg}\cap s\mathcal U}
 \right)
 \cong
 \bigoplus_{\mathcal O\in\Gamma}
 \left.(\pi'_{\mathcal O})_!(\eta'_{\mathcal O})^*
 \mathcal L'_{\mathcal O}
 \right|_{s^{-1}H_{\rm vreg}\cap\mathcal U}.
\end{equation}
This isomorphism is compatible with the chosen Weil structures.

For $\mathcal O\in\Gamma$, define the twisted complex on $H$ by
\[
 \mathcal K'_{\mathcal O}
 :=({\psi_{\mathcal O}})_!
     \beta_{\mathcal O}^*\bar{\mathcal L}'_{\mathcal O}[\dim H].
\]

\begin{proposition}\label{tw}
Let $\mathcal C_{s_{\mathcal O}}$ be the constant local system
on $H$ whose fiber is the stalk $\mathcal L_{s_{\mathcal O}}$. There is
a canonical isomorphism
\begin{equation}\label{tw-eq}
 \mathcal K'_{\mathcal O}
 \cong
 \mathcal C_{s_{\mathcal O}}\otimes
 \mathfrak{Ind}^{H}_{T_{\mathcal O}}\mathcal L_{\mathcal O}.
\end{equation}
If $\mathcal O\in\Gamma^F$, this isomorphism is compatible with the
natural Weil structures. In particular, for every unipotent element
$u\in H^F$,
\begin{equation}\label{tw-tr}
 \chi_{\mathcal K'_{\mathcal O}}(u)
 =\chi_{\mathcal L}(s_{\mathcal O})
  \mathcal{Q}_{T_{\mathcal O},H}^{\mathcal L_{\mathcal O}+}(u).
\end{equation}
\end{proposition}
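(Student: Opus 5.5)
The plan is to push the canonical isomorphism \eqref{Ltw} through the functor $(\psi_{\mathcal O})_!\beta_{\mathcal O}^*(-)[\dim H]$ and then use the realization of $\mathfrak{Ind}^H_{T_{\mathcal O}}\mathcal L_{\mathcal O}$ from \Cref{iw}, applied to the triple $(H,T_{\mathcal O},\mathcal L_{\mathcal O})$ as recalled just before the statement.

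First, since $\bar{\mathcal L}_{\bar s_{\mathcal O}}$ is constant on $\bar T_{\mathcal O}$ with fiber $\bar{\mathcal L}_{\bar s_{\mathcal O}}=\mathcal L_{s_{\mathcal O}}$ (because $p^*\bar{\mathcal L}\cong\mathcal L$), its pullback $\beta_{\mathcal O}^*\bar{\mathcal L}_{\bar s_{\mathcal O}}$ is the constant sheaf on $X'_{\mathcal O}$ with the same fiber. Thus $\beta_{\mathcal O}^*\bar{\mathcal L}'_{\mathcal O}\cong \psi_{\mathcal O}^*\mathcal C_{s_{\mathcal O}}\otimes\beta_{\mathcal O}^*\bar{\mathcal L}_{\mathcal O}$. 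The projection formula for $(\psi_{\mathcal O})_!$ then gives
\[
\mathcal K'_{\mathcal O}\cong \mathcal C_{s_{\mathcal O}}\otimes(\psi_{\mathcal O})_!\beta_{\mathcal O}^*\bar{\mathcal L}_{\mathcal O}[\dim H]\cong \mathcal C_{s_{\mathcal O}}\otimes\mathfrak{Ind}^H_{T_{\mathcal O}}\mathcal L_{\mathcal O},
\]
where the last step is \Cref{iw} for $H$. This requires $I_{\mathcal O}$ to be the Yu-type Iwahori subgroup attached to the restricted Howe datum, as in the preceding remark. Every step (multiplicativity isomorphism, pullback, projection formula) is canonical, so \eqref{tw-eq} is canonical.

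Second, for $\mathcal O\in\Gamma^F$ I would check Weil compatibility. The representative $x_{\mathcal O}$ is $F$-fixed, so $c_{\mathcal O}$, $\bar c_{\mathcal O}$, $m_{\mathcal O}$ and the $\bar s_{\mathcal O}$-translation are $F$-equivariant and $s_{\mathcal O}\in T^F$. Consequently the multiplicativity isomorphism \eqref{Ltw} respects the natural Weil structures, with Frobenius acting on the stalk $\mathcal L_{s_{\mathcal O}}$ by the scalar $\chi_{\mathcal L}(s_{\mathcal O})$. The projection formula is compatible with Weil structures, and the Weil structure on $\mathfrak{Ind}^H$ is by definition the one transported through \Cref{iw}.

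The main obstacle is this compatibility step. In particular, one must confirm that the Weil structure on $\mathfrak{Ind}^H_{T_{\mathcal O}}\mathcal L_{\mathcal O}$ obtained via $I_{\mathcal O}$ agrees with the intermediate-extension one. I would handle this by invoking the last clause of \Cref{iw}, which defines the Weil structure on the right side through that isomorphism.

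Finally, taking characteristic functions at a unipotent $u\in H^F$ gives \eqref{tw-tr}, since the trace on a tensor product with a rank-one constant Weil sheaf multiplies by $\chi_{\mathcal L}(s_{\mathcal O})$. The remaining factor is by definition $\mathcal Q^{\mathcal L_{\mathcal O}+}_{T_{\mathcal O},H}(u)$.
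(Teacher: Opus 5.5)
Your proposal is correct and matches the paper's proof step for step. Both arguments use the multiplicativity isomorphism \eqref{Ltw}, the identification $\beta_{\mathcal O}^*\bar{\mathcal L}_{\bar s_{\mathcal O}}\cong\psi_{\mathcal O}^*\mathcal C_{s_{\mathcal O}}$, the projection formula, \Cref{iw} applied to $(H,T_{\mathcal O})$, and Frobenius compatibility from $x_{\mathcal O}\in\mathcal O^F$; your point that the Weil structure on $\mathfrak{Ind}^{H}_{T_{\mathcal O}}\mathcal L_{\mathcal O}$ is the one transported through \Cref{iw} is exactly the paper's convention.
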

\begin{proof}
By \Cref{Ltw},
\[
 \bar{\mathcal L}'_{\mathcal O}
 \cong
 \bar{\mathcal L}_{\bar s_{\mathcal O}}
 \otimes\bar{\mathcal L}_{\mathcal O}.
\]
The first factor on the right is constant, and
\[
 \beta_{\mathcal O}^*\bar{\mathcal L}_{\bar s_{\mathcal O}}
 \cong\psi_{\mathcal O}^*\mathcal C_{s_{\mathcal O}}.
\]
The projection formula therefore gives
\begin{align*}
 \mathcal K'_{\mathcal O}
 &\cong
 \mathcal C_{s_{\mathcal O}}\otimes
 ({\psi_{\mathcal O}})_!
 \beta_{\mathcal O}^*\bar{\mathcal L}_{\mathcal O}[\dim H]\\
 &\cong
 \mathcal C_{s_{\mathcal O}}\otimes
 \mathfrak{Ind}^{H}_{T_{\mathcal O}}\mathcal L_{\mathcal O},
\end{align*}
where the second isomorphism is the comparison theorem recalled in
\Cref{iw-sec}, applied to $(H,T_{\mathcal O})$.

If $\mathcal O$ is $F$-stable, we have $x_{\mathcal O}\in\mathcal O^F$.
Then every map used above is defined over $\mathbb F_q$, and the
multiplicativity isomorphism for $\bar{\mathcal L}$ is compatible with
Frobenius. Thus \eqref{tw-eq} is an isomorphism of Weil
complexes. Taking Frobenius traces and using
$\chi_{\bar{\mathcal L}}(\bar s_{\mathcal O})
=\chi_{\mathcal L}(s_{\mathcal O})$ proves
\eqref{tw-tr}.
\end{proof}

By the definition of $\mathfrak{Ind}^{G}_{T}\mathcal{L}$ and
\Cref{tw}, we may rewrite
\eqref{loc0} as
\begin{equation}\label{loc}
 \left.(\iota_s)^*\mathfrak{Ind}^{G}_{T}\mathcal L
 \right|_{s^{-1}H_{\rm vreg}\cap\mathcal U}[-\dim G]
 \cong
 \bigoplus_{\mathcal O\in\Gamma}
 \left.
  \mathcal K'_{\mathcal O}
 \right|_{s^{-1}H_{\rm vreg}\cap\mathcal U}[-\dim H].
\end{equation}
Assume that we can show that the isomorphism \ref{loc} is the restriction of an isomorphism:
\begin{equation}\label{glob}
 (\iota_s)^*\left(
   \left.\mathfrak{Ind}^{G}_{T}\mathcal L\right|_{s\mathcal U}
 \right)[-\dim G]
 \cong
 \bigoplus_{\mathcal O\in\Gamma}
 \left.
 \bigl(\mathcal C_{s_{\mathcal O}}\otimes
       \mathfrak{Ind}^{H}_{T_{\mathcal O}}\mathcal L_{\mathcal O}\bigr)
 \right|_{\mathcal U}[-\dim H].
\end{equation}
The isomorphism \ref{glob} extending \ref{loc} is unique and is automatically compatible with the Weil structure. This follows from properties of intersection cohomology complexes.

Indeed, \Cref{iw-sec} and the
decomposition from \Cref{Xdec} give
\[
 \left.\psi_!\beta^*\bar{\mathcal L}\right|_{s\mathcal U}
 \cong
 \bigoplus_{\mathcal O\in\Gamma}
 (\psi|_{X_{\mathcal U,\mathcal O}})_!
 \left.\beta^*\bar{\mathcal L}\right|_{X_{\mathcal U,\mathcal O}}.
\]
Under the isomorphism $\Phi_{\mathcal O}$ of
\Cref{Xloc}, one has
\begin{align*}
 \beta\bigl(\Phi_{\mathcal O}(g,zI_{\mathcal O})\bigr)
 &=p_I\bigl(x_{\mathcal O}^{-1}z^{-1}(sg)zx_{\mathcal O}\bigr)\\
 &=\bar s_{\mathcal O}\,
   \bar c_{\mathcal O}\bigl(\beta_{\mathcal O}(g,zI_{\mathcal O})\bigr).
\end{align*}
Thus $\Phi_{\mathcal O}^*\beta^*\bar{\mathcal L}
=\beta_{\mathcal O}^*\bar{\mathcal L}'_{\mathcal O}$. Transporting
compactly supported pushforward along $\Phi_{\mathcal O}$ yields
\[
 (\iota_s)^*
 \left(\left.\psi_!\beta^*\bar{\mathcal L}\right|_{s\mathcal U}\right)
 \cong
 \bigoplus_{\mathcal O\in\Gamma}
 \left.({\psi_{\mathcal O}})_!
 \beta_{\mathcal O}^*\bar{\mathcal L}'_{\mathcal O}
 \right|_{\mathcal U}.
\]
After inserting the normalizing shifts and applying
\Cref{tw}, this is precisely
\eqref{glob}. All the constructions
are Frobenius equivariant, Frobenius sends the summand indexed by
$\mathcal O$ to the one indexed by $F(\mathcal O)$.

\begin{proposition}\label{Jsum}
For every unipotent element \(u\in H^F\),
\[\chi_{\mathfrak{Ind}^{G}_{T}\mathcal L}(su)
 =
  \sum_{\mathcal O\in\Gamma^F}
  \chi_{\mathcal L}(x_{\mathcal O}^{-1}sx_{\mathcal O})
  \mathcal{Q}_{T_{\mathcal O},H}^{\mathcal L_{\mathcal O}+}(u).\]
\end{proposition}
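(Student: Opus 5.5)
The plan is to take Frobenius traces in the global isomorphism \eqref{glob} at the point $u\in\mathcal U^F$. Every unipotent element of $H$ lies in $\mathcal U$ by \Cref{sep}(1), so $u$ lies in the domain. The stalk of $(\iota_s)^*\mathfrak{Ind}^{G}_{T}\mathcal L$ at $u$ is the stalk of $\mathfrak{Ind}^{G}_{T}\mathcal L$ at $su$. The shifts $[-\dim G]$ and $[-\dim H]$ contribute signs $(-1)^{\dim G}$ and $(-1)^{\dim H}$. Since $G=G_r$ and $H$ are perfectly smooth and contain a common maximal torus, I expect $\dim G\equiv\dim H \pmod 2$, since $\dim G-\dim H$ is twice the number of "positive root" contributions at each level. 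If instead one normalizes characteristic functions as in the statement without shifts, the signs cancel directly. I will check this parity point but treat it as routine.

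The main step is to compute the trace of the Weil structure on the right-hand side of \eqref{glob}, which is a direct sum indexed by $\Gamma$ with Frobenius permuting summands via $\mathcal O\mapsto F(\mathcal O)$. First, summands lying in nontrivial $F$-orbits contribute zero trace at the $F$-fixed point $u$, because Frobenius maps the stalk of the $\mathcal O$-summand to the stalk of the $F(\mathcal O)$-summand, so the trace matrix has zero diagonal blocks. Second, for $\mathcal O\in\Gamma^F$, we chose $x_{\mathcal O}\in\mathcal O^F$. The Weil structure restricted to the $\mathcal O$-summand is then the natural one on $\mathcal K'_{\mathcal O}\cong \mathcal C_{s_{\mathcal O}}\otimes\mathfrak{Ind}^{H}_{T_{\mathcal O}}\mathcal L_{\mathcal O}$. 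This holds because the Weil structure on $\psi_!\beta^*\bar{\mathcal L}$ restricts along $\Phi_{\mathcal O}$, which is defined over $\mathbb F_q$, to the one induced from $\bar{\mathcal L}'_{\mathcal O}$. Applying \eqref{tw-tr}, each fixed summand contributes $\chi_{\mathcal L}(s_{\mathcal O})\mathcal Q^{\mathcal L_{\mathcal O}+}_{T_{\mathcal O},H}(u)$, and $s_{\mathcal O}=x_{\mathcal O}^{-1}sx_{\mathcal O}$.

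The obstacle is the existence of \eqref{glob} together with its compatibility with the Weil structure induced from \Cref{iw}. The paper has essentially argued this: the computation via the decomposition of \Cref{Xdec} and $\Phi_{\mathcal O}$ gives \eqref{glob} directly on the level of $\psi_!$, independently of intermediate extensions. What I need to ensure is that the Weil structure on $\mathfrak{Ind}^G_T\mathcal L$ (defined by \Cref{iw}) matches the one transported through $\Phi_{\mathcal O}$. For this I will use that both are $F$-equivariant pushforwards along $\mathbb F_q$-rational maps, and that they agree on the dense open very regular locus by \eqref{loc0}. The uniqueness of extensions of maps between intermediate extensions, namely $\operatorname{Hom}$ of IC complexes equals $\operatorname{Hom}$ on the open part, then forces agreement everywhere on $\mathcal U$. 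Here I use that each $\mathfrak{Ind}^H_{T_{\mathcal O}}\mathcal L_{\mathcal O}$ is an IC extension from $H_{\rm vreg}$ by \Cref{supportH}, and that restriction to the open subset $s\mathcal U$ preserves the IC property. Summing over $\Gamma^F$ then yields the formula.
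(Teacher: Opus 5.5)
Your proposal is correct and follows the paper's proof: take Frobenius traces in \eqref{glob} at $u\in\mathcal U^F$ (using \Cref{sep}(1)), discard the summands indexed by non-$F$-stable $\mathcal O$ because Frobenius permutes them, evaluate each $F$-stable summand by \eqref{tw-tr}, and cancel the shifts using that $\dim G-\dim H$ is even. Two small corrections: $s\mathcal U$ is open in $H$ but not in $G$, so the intersection-cohomology uniqueness for the Weil structures should be applied on the right-hand side of \eqref{glob} (the restrictions of $\mathfrak{Ind}^{H}_{T_{\mathcal O}}\mathcal L_{\mathcal O}$ to the open $\mathcal U\subset H$); and the parity of $\dim G-\dim H$ does not come from a level-by-level pairing but from the fact that, summed over all Moy--Prasad levels $0\le t\le r$ with $r\in\mathbb Z$, each root outside $Z^0_{\mathbb G}(\tilde s)$ and its negative occur equally often.
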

\begin{proof}
Let \(u\in H^F\) be unipotent. By
\Cref{sep}, \(u\in\mathcal U\). A nontrivial
Frobenius orbit in \(\Gamma\) contributes zero because Frobenius
permutes the corresponding distinct summands. Thus only
\(\mathcal O\in\Gamma^F\) contribute. For such an \(\mathcal O\),
\Cref{tw-tr} identifies the characteristic function of the corresponding
twisted summand with
\[
 \chi_{\mathcal L}(s_{\mathcal O})
 \mathcal{Q}_{T_{\mathcal O},H}^{\mathcal L_{\mathcal O}+}(u).
\]
Taking characteristic functions in \eqref{glob} proves the
formula. Here the
normalizing shifts on the two sides give the same sign since
$\dim G-\dim H$ is even.
\end{proof}

For \(x\in G^F\) satisfying \(x^{-1}sx\in T\), set
\[
 T_x=xTx^{-1}\subset H,
 \qquad
 c_x:T_x\longrightarrow T,\quad t\longmapsto x^{-1}tx,
 \qquad
 \mathcal L_x=c_x^*\mathcal L.
\]
\begin{theorem}
\label{JD}
For every unipotent element \(u\in H^F\),
\begin{equation}
\chi_{\mathfrak{Ind}^{G}_T\mathcal{L}}(su)
 =\frac{1}{|H^F|}
 \sum_{\substack{x\in G^F\\x^{-1}sx\in T}}
 \mathcal{Q}_{T_x,H}^{\mathcal L_x+}(u)
 \chi_\mathcal{L}(x^{-1}sx).
\label{JD-eq}
\end{equation}
\end{theorem}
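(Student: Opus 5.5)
The plan is to deduce \Cref{JD} from \Cref{Jsum} by turning the sum over $\Gamma^F$ into a sum over the finite set $\mathcal M^F=\{x\in G^F: x^{-1}sx\in T\}$. Two things are needed. First, $\mathcal M^F$ breaks up as the disjoint union of the sets $\mathcal O^F$ with $\mathcal O\in\Gamma^F$. Second, the summand $\mathcal{Q}_{T_x,H}^{\mathcal L_x+}(u)\chi_{\mathcal L}(x^{-1}sx)$ is constant on each $\mathcal O^F$, and $|\mathcal O^F|=|H^F|$. Combining these gives
\[
\frac{1}{|H^F|}\sum_{x\in\mathcal M^F}\mathcal{Q}_{T_x,H}^{\mathcal L_x+}(u)\chi_{\mathcal L}(x^{-1}sx)
=\sum_{\mathcal O\in\Gamma^F}\chi_{\mathcal L}(s_{\mathcal O})\mathcal{Q}_{T_{\mathcal O},H}^{\mathcal L_{\mathcal O}+}(u),
\]
and the right-hand side equals $\chi_{\mathfrak{Ind}^G_T\mathcal L}(su)$ by \Cref{Jsum}.

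\textbf{Orbit structure.} Every $x\in\mathcal M^F$ lies in an $F$-stable double coset, so $\mathcal M^F=\bigsqcup_{\mathcal O\in\Gamma^F}\mathcal O^F$. Fix $\mathcal O\in\Gamma^F$ with its representative $x_{\mathcal O}\in\mathcal O^F$. The group $H\times T$ acts on $\mathcal O$ by $(h,t)\cdot x=hxt^{-1}$. I would show that $H\times T\to\mathcal O$, $(h,t)\mapsto hx_{\mathcal O}t$, is a torsor under a connected group. The stabilizer is $\{(h,t): hx_{\mathcal O}t=x_{\mathcal O}\}$, i.e.\ $t=x_{\mathcal O}^{-1}h^{-1}x_{\mathcal O}$ with $h\in H\cap T_{\mathcal O}=T_{\mathcal O}$ (recall $T_{\mathcal O}\subset H$). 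So the stabilizer is isomorphic to $T_{\mathcal O}$, which is connected. Lang's theorem then identifies $\mathcal O^F$ with $(H\times T)^F/T_{\mathcal O}^F$. Since $T_{\mathcal O}^F\cong T^F$ via $c_{\mathcal O}$, this gives $|\mathcal O^F|=|H^F||T^F|/|T^F|=|H^F|$. The same Lang argument also shows that every element of $\mathcal O^F$ has the form $hx_{\mathcal O}t$ with $h\in H^F$, $t\in T^F$.

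\textbf{Constancy.} Take $x=hx_{\mathcal O}t$ with $h\in H^F$, $t\in T^F$. Because $T$ is commutative, $x^{-1}sx=t^{-1}s_{\mathcal O}t=s_{\mathcal O}$, so the $\chi_{\mathcal L}$ factors agree. Also $T_x=hT_{\mathcal O}h^{-1}$, and $c_x=c_{\mathcal O}\circ\operatorname{Ad}(h^{-1})$ up to conjugation by $t$, which is trivial on $T$. Hence $\mathcal L_x=\operatorname{Ad}(h^{-1})^*\mathcal L_{\mathcal O}$, compatibly with Weil structures since $h$ is $F$-fixed. Conjugation by $h\in H^F$ is an $F$-equivariant automorphism of $H$ carrying $(T_{\mathcal O},\mathcal L_{\mathcal O})$ to $(T_x,\mathcal L_x)$, and it preserves $H_{\rm vreg}$. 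Transporting the construction of $\mathfrak{Ind}^H$ along it gives
\[
\operatorname{Ad}(h)^*\mathfrak{Ind}^H_{T_x}\mathcal L_x\cong\mathfrak{Ind}^H_{T_{\mathcal O}}\mathcal L_{\mathcal O}
\]
as Weil complexes. Therefore
\[
\mathcal{Q}_{T_x,H}^{\mathcal L_x+}(u)=\mathcal{Q}_{T_{\mathcal O},H}^{\mathcal L_{\mathcal O}+}(h^{-1}uh).
\]

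\textbf{Main obstacle.} To conclude that this equals $\mathcal{Q}_{T_{\mathcal O},H}^{\mathcal L_{\mathcal O}+}(u)$, I need the Green function to be invariant under $H^F$-conjugation. This should follow because $\mathfrak{Ind}^H_{T_{\mathcal O}}\mathcal L_{\mathcal O}$ is $H$-equivariant as a Weil complex, and the isomorphism is canonical: it is the intermediate extension of $\pi_!\eta^*$, and $H$ acts on $Y$ compatibly. A second point is whether $\mathcal{Q}$ depends on the choice of Iwahori-type data used in the Weil structure (cf.\ \Cref{changehowe}). Here I would argue via \Cref{iw}: conjugation by $h$ carries the Yu-type realization for $T_{\mathcal O}$ to the one for $T_x$. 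So the Weil structures correspond and the Frobenius traces agree. Once this equivariance is in place, summing over $\mathcal O^F$ and dividing by $|H^F|$ proves \eqref{JD-eq}.
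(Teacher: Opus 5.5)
Your proposal is correct and is essentially the paper's proof. The paper likewise applies Lang's theorem with the connected stabilizer $\{(x_{\mathcal O}tx_{\mathcal O}^{-1},t):t\in T\}\cong T$, which identifies $H^F\backslash\mathcal M^F/T^F$ with $\Gamma^F$ and shows each double coset has $|H^F|$ elements. It checks constancy of the summand the same way: right $T^F$-translation changes nothing, and left $H^F$-translation conjugates the induction datum, whose characteristic function is a class function on $H^F$. Then \Cref{Jsum} concludes.

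Your ``main obstacle'' is exactly the class-function step the paper asserts in one line. It can even be bypassed: for $h\in H^F$, the $F$-equivariant isomorphism $(y,zT_{\mathcal O})\mapsto(y,zh^{-1}T_x)$ of incidence varieties lies over the identity of $H_{\rm vreg}$ and intertwines the two maps $\eta$ via $\operatorname{Ad}(h)$. It therefore identifies $\mathfrak{Ind}^H_{T_{\mathcal O}}\mathcal L_{\mathcal O}$ with $\mathfrak{Ind}^H_{T_x}\mathcal L_x$ as Weil complexes directly.
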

\begin{proof}
Recall that
\[
\mathcal M^F=\{x\in G^F:x^{-1}sx\in T\}.
\]
Lang's theorem implies:
\[
 H^F\backslash\mathcal M^F/T^F\xrightarrow{\sim}\Gamma^F.
\]
Indeed, the stabilizer in \(H\times T\) of a representative
\(x_{\mathcal O}\in\mathcal M^F\) is
\[
 \{(x_{\mathcal O}tx_{\mathcal O}^{-1},t):t\in T\}\cong T,
\]
which is connected. So the summand in \eqref{JD-eq} is constant on every
\(H^F\times T^F\)-orbit, right multiplication by \(T^F\) changes
neither \(T_x\), \(\mathcal L_x\), nor \(x^{-1}sx\), while left
multiplication by \(H^F\) conjugates the induction datum in \(H\),
and its characteristic function is a class function on \(H^F\). Moreover,
each such double coset has cardinality
\[
 \frac{|H^F|\,|T^F|}{|T^F|}=|H^F|,
\]
by Lang's Lemma. Therefore, the
right hand side of \eqref{JD-eq} is exactly the sum over
\(\mathcal O\in\Gamma^F\). The result follows from
\Cref{Jsum}.
\end{proof}

\begin{theorem}[Deep level Lusztig's comparison theorem]
\label{LC}
Suppose that $q$ is sufficiently large. Then
\[
 \chi_{\mathfrak{Ind}^{G}_{T}\mathcal L}
 =(-1)^{\dim G}\mathcal R^{G}_{T}(\phi).
\]
\end{theorem}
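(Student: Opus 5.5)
We would prove \Cref{LC} the way \cite{Lu90} is proved: show that both sides satisfy the same Jordan decomposition formula, and then compare their unipotent parts.

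\emph{Step 1 (the geometric side).} By \Cref{JD}, the value $\chi_{\mathfrak{Ind}^{G}_{T}\mathcal L}(su)$ is expressed through the generalized Green functions $\mathcal Q^{\mathcal L_x+}_{T_x,H}$ on $H=Z^0_G(s)$ and the values $\chi_{\mathcal L}(x^{-1}sx)=\phi(x^{-1}sx)$.

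\emph{Step 2 (the representation-theoretic side).} For $\mathcal R^{G}_{T}(\phi)$ we want the analogous character formula. One takes the variety $Z_r$ of \cite{Nie_24} and applies the Deligne--Lusztig fixed-point argument for $s$ acting on $Z_r$, using the compact support Lefschetz formula for the commuting pair $(s,u)$. This gives
\[
 \mathcal R^{G}_{T}(\phi)(su)=\frac{1}{|H^F|}\sum_{\substack{x\in G^F\\x^{-1}sx\in T}}\mathcal Q^{H}_{T_x,\phi_x}(u)\,\phi(x^{-1}sx),
\]
where $\mathcal Q^{H}_{T_x,\phi_x}$ is the restriction of $\mathcal R^{H}_{T_x}(\phi_x)$ to unipotent elements. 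Here $Z_r$ for $H$ is built from the Howe factorization restricted to $Z^0_{\mathbb G}(\tilde s)$, as in the remark on $I_{\mathcal O}$. We also need that this Green function depends only on $\phi|_{T^{0+}}$. This should follow exactly as in \Cref{Qpos} and \Cref{changehowe}: the $s$-fixed locus decomposes over $\Gamma^F$, in the same way as the decomposition of $X_{\mathcal U}$ in \Cref{Xdec}.

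\emph{Step 3 (reduction to unipotent elements).} Comparing the two Jordan formulas term by term, using $\dim G-\dim H$ even, it suffices to prove for every $H$ (a group of the same type) that
\[
 \mathcal Q^{\mathcal L+}_{T,H}(u)=(-1)^{\dim H}\mathcal Q^{H}_{T,\phi}(u)\qquad\text{for }u\in H^F\text{ unipotent}.
\]

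\emph{Step 4 (the key step).} Since both sides depend only on $\phi|_{T^{0+}}$, we may use the freedom of \Cref{changehowe} to replace the depth-zero part $\phi_{-1}$ by a depth-zero character $\phi'_{-1}$ for which $\phi'$ becomes regular. For a regular character the comparison is known: by \cite[Theorem C]{BC24} and \cite[Theorem 1.4]{INY25}, $\chi_{\mathfrak{Ind}\mathcal L_{\phi'}}=(-1)^{\dim G}\mathcal R(\phi')$, since in the regular case $\mathcal R=R$ up to the required identifications. Restricting this equality to unipotent elements gives the desired identity of Green functions for $\phi$.

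\emph{Main obstacle.} The main difficulty is exactly where ``$q$ sufficiently large'' enters: one must guarantee that such a regularizing depth-zero twist exists. In depth zero this is Lusztig's argument that $T_0^F$ has enough characters in general position once $q$ is large; here one needs the analogue relative to the twisted Levi sequence and Weyl stabilizers $W_{\mathcal L}$. A second point needs care. The regular-case theorems are stated for the Deligne--Lusztig characters $R$, whereas we need $\mathcal R$, and for a nonelliptic $T$ these may differ. To handle this, we would check that the regular comparison in \cite{INY25} is actually proved for the $Z_r$-construction attached to the same Howe data. Failing that, we would run the fixed-point computation of Step 2 directly on $X$ and $Z_r$ to match their unipotent traces.
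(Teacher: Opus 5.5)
Your proposal is correct and follows the paper's proof essentially step for step: the Jordan decomposition \Cref{JD} on the sheaf side, the matching character formula for $\mathcal R^G_T(\phi)$, independence of both Green functions from the depth-zero part with the Howe data held fixed (\Cref{Qpos}, \Cref{changehowe}), a regularizing depth-zero twist $\phi^\dagger$ that exists for large $q$, and the regular-case comparison applied to each $(H,T_x,\phi_x)$. The two points you flag are settled in the paper by citation: \cite[Theorem 3.1]{LN26} gives the depth-zero independence on the $\mathcal R$ side, whose natural proof is the Deligne--Lusztig fixed-point argument for the free $T^F$-action, which kills the terms with $t\notin (T^{0+})^F$, rather than the $\Gamma^F$-decomposition you invoke; and \cite[Theorem 8.6]{INY25} is used directly in the form $\mathcal Q^{\mathcal L^\dagger+}_{T,G}=(-1)^{\dim G}\mathcal R^G_T(\phi^\dagger)$ on unipotent elements.
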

\begin{proof}
Write
\[
 Q_{T,G}^{\phi+}(v)
 :=\mathcal R_T^G(\phi)(v)
 \qquad (v\in G^F\text{ unipotent}).
\]
By \cite[Theorem 3.1]{LN26}, this function depends only on
$\phi|_{T^{0+}}$. Since $q$ is sufficiently large, we may choose a
regular character $\phi^\dagger$ with the same restriction to $T^{0+}$ as in \Cref{changehowe} and $\mathcal{L}^{\dagger}$ be the corresponding local system on $T$.
Then \Cref{Qpos},
\cite[Theorem 8.6]{INY25}, and 
\cite[Theorem 3.1]{LN26} imply
\begin{equation}\label{Qcomp}
 \begin{aligned}
 \mathcal{Q}_{T,G}^{\mathcal L+}(v)
 &=\mathcal{Q}_{T,G}^{\mathcal L^{\dagger}+}(v)\\
 &=(-1)^{\dim G}\mathcal R_T^G(\phi^\dagger)(v)\\
 &=(-1)^{\dim G} Q_{T,G}^{\phi+}(v).
 \end{aligned}
\end{equation}

Let $g=su\in G^F$ be its Jordan
decomposition and write
$H=Z_G^0(s)$. For $x\in G^F$ with $x^{-1}sx\in T$, let $\phi_x$ be the
character of $T_x^F$ transported from $\phi$. Hence
\eqref{Qcomp} applies to $(H,T_x,\phi_x)$.
Using \Cref{JD}, we obtain
\begin{align*}
 \chi_{\mathfrak{Ind}^{G}_{T}\mathcal L}(su)
 &=\frac{1}{|H^F|}
   \sum_{\substack{x\in G^F\\x^{-1}sx\in T}}
   \mathcal Q_{T_x,H}^{\mathcal{L}_x+}(u)\,
   \phi(x^{-1}sx)\\
&=\frac{(-1)^{\dim H}}{|H^F|}
   \sum_{\substack{x\in G^F\\x^{-1}sx\in T}}
    Q_{T_x,H}^{\phi_x+}(u)\,
   \phi(x^{-1}sx)\\
 &=(-1)^{\dim G}\mathcal R_T^G(\phi)(su).
\end{align*}
as required.
\end{proof}

\section{Orthogonality and simple summands}
\label{endalg}

\subsection{Orthogonality}
In this section we write $d=\dim G$. And 
all local systems are equipped with
Weil structures. We write
\[
\langle f,f'\rangle_{G^F}=|G^F|^{-1}\sum_{g\in G^F}f(g)f'(g).
\]
Let $(T,\mathcal L)$, $(T',\mathcal L')$ be $F$-stable induction pairs,
with corresponding characters $\phi,\phi'$, and set
\[
 \mathcal K=\mathfrak{Ind}_T^G\mathcal L,\qquad
 \mathcal K'=\mathfrak{Ind}_{T'}^G\mathcal L',\qquad
 \mathcal N(T,T')=\{n\in G:nTn^{-1}=T'\}.
\]

\begin{theorem}\label{orth-ind}
One has
\begin{equation}\label{ind-orth}
 \langle \chi_{\mathcal K},\chi_{\mathcal K'}\rangle_{G^F}
 =|\mathcal N(\phi,\phi')^F/T^F|,
\end{equation}
where
\[
 \mathcal N(\phi,\phi')^F
 =\{n\in\mathcal N(T,T')^F:
       \phi(t)\phi'(ntn^{-1})=1, t\in T^F\}.
\]
\end{theorem}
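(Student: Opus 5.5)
Note first that the pairing $\langle f,f'\rangle_{G^F}$ carries no complex conjugation, so the right-hand side of \eqref{ind-orth} involves $\phi'(ntn^{-1})$ rather than its conjugate, i.e.\ it pairs $\phi$ with the dual of $\phi'$ transported by $n$.

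The plan is to reduce \eqref{ind-orth} to the analogous orthogonality for the virtual characters $\mathcal R^{G}_{T}(\phi)$ via \Cref{LC}, and to compute the latter through the Jordan decomposition \Cref{JD} and an orthogonality formula for generalized deep level Green functions. Concretely, for $q$ large, \Cref{LC} gives $\chi_{\mathcal K}\chi_{\mathcal K'}=\mathcal R_T^G(\phi)\mathcal R_{T'}^G(\phi')$, since the two signs $(-1)^{d}$ cancel. For general $q$ one must work directly with characteristic functions, using \Cref{JD} as the character formula. I will write the argument so that only \Cref{JD} and the Green-function orthogonality are used.

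First, expand the inner product as a sum over semisimple $s\in G^F$ and unipotent $u\in Z^0_G(s)^F$. We may ignore the difference between $Z_G(s)$ and $Z^0_G(s)$: the terms supported on $su$ with $x^{-1}sx\in T$ force $u\in H$ by \Cref{sep}. Substituting \Cref{JD} for both factors gives
\[
\langle\chi_{\mathcal K},\chi_{\mathcal K'}\rangle
=\frac{1}{|G^F|}\sum_{s}\frac{1}{|H^F|^2}\sum_{x,x'}\phi(x^{-1}sx)\,\phi'(x'^{-1}sx')\sum_{u}\mathcal Q_{T_x,H}^{\mathcal L_x+}(u)\,\mathcal Q_{T'_{x'},H}^{\mathcal L'_{x'}+}(u).
\]
Second, establish the Green-function orthogonality in $H$:
\[
\frac{1}{|H^F|}\sum_{u}\mathcal Q^{\mathcal L_1+}_{T_1,H}(u)\,\mathcal Q^{\mathcal L_2+}_{T_2,H}(u)
=\frac{|\{n\in\mathcal N_H(T_1,T_2)^F:\ \mathcal L_1|_{T_1^{0+}}\text{ and } n^*\mathcal L_2|_{T_2^{0+}}\text{ dual}\}|}{|T_1^F|\,|T_2^F|}.
\]
This is the main obstacle. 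I would first try to prove it by applying \Cref{JD} with $s=1$, so that $\mathcal Q$ is $\chi_{\mathfrak{Ind}}$ on unipotents, and by \Cref{Qpos} and \Cref{changehowe} replacing $\phi_1,\phi_2$ by regular characters with the same positive-depth restrictions. For regular characters, the orthogonality of $\mathcal R^{H}_{T_i}$ is known from \cite{INY25,Nie_24}. Averaging that identity over the depth-zero parts $\phi_{-1}$ (characters of $T_0^F$) isolates the unipotent sum, exactly as in Deligne--Lusztig's derivation of Green-function orthogonality. Here $q$ large guarantees that enough regular twists exist, and finite-field character orthogonality on $T_0^F$ does the extraction.

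Third, plug this back in. For each $s$ and each pair $(x,x')$, the surviving $n\in H^F$ glue $x,x'$ into an element $m=x'^{-1}nx\in\mathcal N(T,T')^F$ satisfying $m x^{-1}sx\, m^{-1}=x'^{-1}sx'$. Reorganize the sum over $(s,x,x',n)$ as a sum over $m\in\mathcal N(T,T')^F$ and $t=x^{-1}sx\in T^F$. The counting factors $|G^F|$, $|H^F|$ and $|T^F|$ cancel using the orbit counts from the proof of \Cref{JD}. The $0+$-part condition from the Green orthogonality, combined with the character values $\phi(t)\phi'(mtm^{-1})$ on semisimple $t$, then gives
\[
\sum_{m}\frac{1}{|T^F|}\sum_{t\in T^F}\phi(t)\,\phi'(mtm^{-1}).
\]
By orthogonality of characters of $T^F$, this equals $|\mathcal N(\phi,\phi')^F/T^F|$.

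The bookkeeping in this third step, namely matching the semisimple and positive-depth parts of $t$ and checking that the splitting $T=T_0\times T^{0+}$ is compatible with Jordan decomposition, is routine and parallels \cite[\S 6]{DL}.
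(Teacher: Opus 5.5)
\textbf{Circularity in your second step.} Your second step carries all the content, and as written it is circular. Averaging over depth-zero twists isolates the unipotent part only if $\lambda$ runs over all of $\widehat{T_0^F}$. In that case $\sum_{\lambda\in\widehat{T_0^F}}\chi_{\mathfrak{Ind}_T^G\mathcal L_{\phi\otimes\lambda}}$ equals $|T_0^F|\,\mathcal Q^{\mathcal L+}_{T,G}$ on unipotent elements and vanishes elsewhere. A sum over the regular twists alone does not kill the non-unipotent classes, however large $q$ is. So you need $\langle\chi_{\mathfrak{Ind}_T^G\mathcal L_{\phi\otimes\lambda}},\chi_{\mathcal K'}\rangle_{G^F}$ for the non-regular twists as well, and that is an instance of \Cref{orth-ind} itself.

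This is the paper's logic run backwards: \Cref{orth-green} is \emph{deduced} from \Cref{orth-ind} by exactly this averaging. To break the circle you would have to import a Mackey formula for $\mathcal R$ valid when only one of the two characters is regular. You would then pair the full $\lambda$-average on one side against a regular $\phi'^{\dagger}$ with $\phi'^{\dagger}|_{(T'^{0+})^F}=\phi'|_{(T'^{0+})^F}$, using \Cref{LC} for each $\phi\otimes\lambda$. Neither your proposal nor the paper supplies such an input. And if a Mackey formula for arbitrary characters were available, \Cref{LC} would give the identity directly (this is the remark after the theorem), making the \Cref{JD} detour superfluous.

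\textbf{No descent to the given $q$.} Even granting step two, your argument only covers large $q$, since it relies on regular twists, \cite[Theorem 8.6]{INY25} and \Cref{LC}. But \Cref{orth-ind} is asserted for every $q$, and you have no way back to the given $q$. The paper's proof is organized around exactly this descent:
\begin{enumerate}
\item By \cite[\S7.4]{Lus_Ch2}, $H^i_c(G,\mathcal K\otimes\mathcal K')=0$ for $i>0$, and $H^0_c(G,\mathcal K\otimes\mathcal K')\cong\overline{\mathbb Q}_\ell[S](-d)$, with $S$ indexing a basis of $\operatorname{Hom}(\mathcal K,\mathbb D\mathcal K')$.
\item Writing $I_m$ for the pairing over $G^{F^m}$, weights give $I_m=|S^{F^m}|+O(q^{-m/2})$.
\item For $m\gg0$, \Cref{LC} over $\mathbb F_{q^m}$ makes $I_m$ an integer, forcing $I_m=|S^{F^m}|$.
\item By Grothendieck--Lefschetz, $|G^{F^m}|I_m$ and $|G^{F^m}|\,|S^{F^m}|$ are both finite sums of $m$-th powers, so equality for $m\gg0$ propagates to $m=1$.
\end{enumerate}

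\textbf{Two smaller corrections.} First, your Green orthogonality formula is off by the factor $|(T_1^{0+})^F|$. For $G=T$ the left side is $|(T^{0+})^F|/|T^F|$ when $\phi_1^+\phi_2^+=1$, not $1/|T^F|$. The correct version is \Cref{orth-green}, and its inner sum over $(T^{0+})^F$ is precisely what combines with the semisimple $t\in T_0^F$ into the sum over $T^F$ in your third step. With that correction, your reindexing $(s,x,x',n)\mapsto(x,\,m=x'^{-1}nx,\,t=x^{-1}sx,\,n)$, with $x\in G^F$ and $n\in Z^0_G(s)^F$ free, does give the right answer. Second, the reduction from $Z_G(s)$ to $Z^0_G(s)$ does not follow from \Cref{sep}. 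It comes from the stalk description: $x^{-1}sux\in I$ forces $x^{-1}ux\in Z_I(x^{-1}sx)$, which is connected.
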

\begin{proof}
For $n\in\mathcal N(T,T')$ write $c_n(t)=ntn^{-1}$, and set
\[
 S=\{nT\in\mathcal N(T,T')/T:
              \mathcal L\otimes c_n^*\mathcal L'\cong
              \overline{\mathbb Q}_\ell\}.
\]
By \cite[\S7.4]{Lus_Ch2}, $H_c^i(G,\mathcal K\otimes\mathcal K')=0$
for $i>0$, whereas its degree zero part is the dual of
$\operatorname{Hom}(\mathcal K,\mathbb D\mathcal K')$.
By
\cite[\S 5]{INY25}, we know this Hom space is linearly spanned by
$\mathcal N(\phi,\phi')/T\cong S$. The relevant
covering space is $G/T\times T_{\mathrm{vreg}}$, and restriction
from $T$ to its dense open $T_{\mathrm{vreg}}$
detects whether the rank-one local system is trivial. Thus
\[
 H_c^0(G,\mathcal K\otimes\mathcal K')
       \cong\overline{\mathbb Q}_\ell[S](-d).
\]
Hence
we have \(\operatorname{Tr}\!\left(
F^m,H_c^0(G,\mathcal K\otimes\mathcal K')
\right)
=q^{md}|S^{F^m}|.\)
Let $I_m$ be the left hand side of \Cref{ind-orth} for $F^m$.
we have
\[
 I_m=|S^{F^m}|+O(q^{-m/2}).
\]
For all sufficiently large $m$, \Cref{LC} makes $I_m$ an integer,
so $I_m=|S^{F^m}|$. The difference
\[
 \operatorname{Tr}(F^m,R\Gamma_c(G,\mathcal K\otimes\mathcal K'))
       -|G^{F^m}|\operatorname{Tr}(F^m,\overline{\mathbb Q}_\ell[S])
\]
is a finite sum of powers of nonzero eigenvalues. It is vanishing for every $m$ not only large $m$, by Vandermonde
argument. In particular $I_1=|S^F|$. Lang's theorem for $T$ gives
an $F$-fixed representative of every coset in $S^F$, then identifies $S^F$ with
$\mathcal N(\phi,\phi')^F/T^F$.
\end{proof}
\begin{remark}
    This theorem can be also proved as follows: first, by Mackey formula of deep level Deligne-Lusztig character, we know \Cref{ind-orth} holds for sufficiently large $m$. Then we still use Vandermonde
argument to both side.
\end{remark}
\begin{theorem}\label{orth-green}
Write $\phi^+=\phi|_{(T^{0+})^F}$ and similarly for $\phi'^+$.
Then
\begin{equation}\label{green-orth}
\begin{aligned}
 &\frac1{|G^F|}\sum_{u\in G^F_{\mathrm{unip}}}
 \mathcal Q_{T,G}^{\mathcal L+}(u)\mathcal Q_{T',G}^{\mathcal L'+}(u)\\
 &=\frac1{|T^F|\,|(T')^F|}
 \sum_{n\in\mathcal N(T,T')^F}\sum_{t\in(T^{0+})^F}
                    \phi^+(t)\phi'^+(ntn^{-1}).
\end{aligned}
\end{equation}
\end{theorem}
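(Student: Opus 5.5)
We deduce \Cref{orth-green} from \Cref{orth-ind} by averaging over depth-zero twists, in the spirit of Deligne--Lusztig's proof of the orthogonality of Green functions.

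\textbf{Step 1: isolate the unipotent locus.} For a class function $f$ on $G^F$, the indicator of the unipotent set can be written through characteristic functions of the $\mathfrak{Ind}$ sheaves as follows. Let $\theta$ and $\theta'$ range over the characters of $T_0^F$ and $(T'_0)^F$. Put
\[
\phi_\theta=\theta\cdot\phi^+ ,\qquad \phi'_{\theta'}=\theta'\cdot\phi'^+ ,
\]
viewed as characters of $T^F=T_0^F\times(T^{0+})^F$. Let $\mathcal L_\theta,\mathcal L'_{\theta'}$ be the attached local systems; by \Cref{changehowe} the same Iwahori-type group $I$ works for all of them.

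\textbf{Step 2: average the Jordan decomposition.} Apply \Cref{JD} to $\chi_{\mathfrak{Ind}\mathcal L_\theta}(su)$. The Green function $\mathcal Q^{\mathcal L_{\theta,x}+}_{T_x,H}$ does not depend on $\theta$, by \Cref{Qpos}. The dependence on $\theta$ therefore sits only in the factor $\chi_{\mathcal L_\theta}(x^{-1}sx)$, which equals $\theta((x^{-1}sx)_0)\,\phi^+((x^{-1}sx)^{0+})$. Summing over $\theta$ kills every term with $(x^{-1}sx)_0\neq1$.

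Since $p$ is not bad, the semisimple elements of $T$ are exactly those of $T_0$ (the component $T^{0+}$ is pro-unipotent). Hence the sum survives only when $s=1$, so that $x$ is arbitrary, $H=G$, and the term equals $|T_0^F|\,\mathcal Q^{\mathcal L+}_{T,G}(u)$ after accounting for the factor $|G^F|/|G^F|$. This gives
\[
\sum_\theta \chi_{\mathfrak{Ind}\mathcal L_\theta}(g)=|T_0^F|\,\mathbf 1_{\mathrm{unip}}(g)\,\mathcal Q^{\mathcal L+}_{T,G}(g).
\]
Similarly for $\theta'$.

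\textbf{Step 3: apply \Cref{orth-ind}.} The left side of \eqref{green-orth} equals
\[
\frac{1}{|T_0^F||T_0'^F|}\sum_{\theta,\theta'}\bigl\langle \chi_{\mathcal K_\theta},\chi_{\mathcal K'_{\theta'}}\bigr\rangle_{G^F}.
\]
(One could use only one of the two sums, but using both keeps the bookkeeping symmetric.) By \Cref{orth-ind} each inner product is
\[
|T^F|^{-1}\sum_{n\in\mathcal N(T,T')^F}\mathbf 1\bigl[\phi_\theta(t)\phi'_{\theta'}(ntn^{-1})=1\ \forall t\bigr].
\]
Write this indicator as $|T^F|^{-1}\sum_{t\in T^F}\phi_\theta(t)\phi'_{\theta'}(ntn^{-1})$. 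Summing over $\theta,\theta'$ forces $t_0=1$ twice, giving $|T_0^F||T_0'^F|$ times a sum over $t\in(T^{0+})^F$. Collecting the constants, with $|T^F|=|(T')^F|$ since $n$ is $F$-fixed, produces the right side of \eqref{green-orth}.

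\textbf{Main obstacle.} The main risk is Step 2. One must check that for every $\theta$ the hypothesis of \Cref{JD} holds with the same Howe data, and that $T=T_0\times T^{0+}$ splits $F$-equivariantly on $F$-points. Both points are covered by \Cref{changehowe} and the Teichmüller splitting. One must also confirm that $\mathcal N(\phi,\phi')$ in \Cref{orth-ind} genuinely means the triviality of $\phi\cdot c_n^*\phi'$, so that the character-sum rewriting is valid. If the sign conventions force $\phi'$ to appear dualized, we would instead apply \Cref{orth-ind} with $\mathbb D$ and absorb the resulting conjugation. This is harmless, because the Green functions are real-valued on our normalization.
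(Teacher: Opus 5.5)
Your argument is correct and is essentially the paper's proof: you average over depth-zero twists, using \Cref{JD}, \Cref{Qpos} and \Cref{changehowe}, to cut out the unipotent locus, and then invoke \Cref{orth-ind}. The paper twists only one side and counts the cosets $nT^F$ with $\phi^+c_n^*\phi'^+=1$, while your symmetric double average with the character-sum rewriting is equivalent bookkeeping. Drop the fallback remark that the Green functions are real-valued: it is false in general (for a torus one gets $\pm\phi^+$), and it is unnecessary, because $\langle f,f'\rangle_{G^F}$ is bilinear and \Cref{orth-ind} is already stated with the condition $\phi\cdot c_n^*\phi'=1$.
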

\begin{proof}
From \Cref{changehowe}, \Cref{Qpos}, \Cref{JD}, we obtain 
\[
 \sum_{\lambda\in\widehat{T^F_0}}
 \chi_{\mathfrak{Ind}_T^G\mathcal L_{\phi\otimes{\lambda}}}(g)
 =\begin{cases}
 |T^F_0|\mathcal Q_{T,G}^{\mathcal L+}(g),&g\text{ unipotent},\\
 0,&\text{otherwise}.
 \end{cases}
\]
Indeed, the sum of the characters $\lambda$ vanishes at every
nonidentity semisimple element of $T_0^F$. Pair this identity with
$\chi_{\mathcal K'}$ and use \Cref{orth-ind}.
$nT^F\in\mathcal N(T,T')^F/T^F$ contributes exactly once only if
$\phi^+c_n^*\phi'^+=1$.
Consequently the required scalar product is $|T^F_0|^{-1}$ times the
number of these cosets. In (\ref{green-orth}) the inner sum is
$|(T^{0+})^F|$ for precisely the same cosets and zero otherwise.
Since $|T^F|=|T^F_0|\,|(T^{0+})^F|=|(T')^F|$ whenever $\mathcal{N}(T,T')^F$ is nonempty, the two expressions agree. If
$N(T,T')^F$ is empty, both are zero.
\end{proof}

\subsection{Simple induced sheaves}
Let $\mathcal K=\mathfrak{Ind}_T^G\mathcal L$ and write
$W_{\mathcal L}=\{w\in N_G(T)/T:w^*\mathcal L\cong\mathcal L\}.
$.
By
\cite[\S 5]{INY25}, we have:
\begin{equation}\label{isotypic}
\begin{gathered}
 \mathscr A:=\operatorname{End}(\mathcal K)
       \cong\overline{\mathbb Q}_\ell[W_{\mathcal L}],\\
 \mathcal K\cong\bigoplus_A A\otimes V_A,\quad
 V_A=\operatorname{Hom}(A,\mathcal K).
\end{gathered}
\end{equation}
Here $A$ runs through the simple summands of $\mathcal K$, and $V_A$ through the
irreducible $\mathscr A$-modules. For $w\in W_{\mathcal{L}}$, let $\theta_w$ be the standard
basis from the deck transformation
$
(g,xT)\mapsto(g,x\dot w^{-1}T),
$
thus $\theta_w\theta_v=\theta_{wv}$.
Similar to \cite[\S10.4]{Lus_Ch2}, for $v\in V_A$ let $F^*(v)$ be the corresponding homomorphism $F^*A\to F^*\mathcal{K}$. Set
\[
 \iota(a)=\Phi F^*(a)\Phi^{-1},\qquad
 \sigma_A(v)=\Phi F^*(v)\Phi_A^{-1},
\]
where $\iota$ is the automorphism of the algebra $\mathscr{A}$ defined as above, $\Phi:F^*\mathcal K\simeq\mathcal K$ is the original Weil structure and
$\Phi_A:F^*A\simeq A$ is chosen for each $F$ equivariant summand.
Then $\sigma_A(av)=\iota(a)\sigma_A(v)$ and $\iota(\theta_v)=\theta_{F^{-1}(v)}$. Set
\[
 h_A(w)=\operatorname{Tr}(\theta_w\sigma_A,V_A),\qquad
 h_A^\vee(w)=\operatorname{Tr}((\theta_w\sigma_A)^{-1},V_A).
\]
Then by two orthogonality relations in \cite[\S10.3]{Lus_Ch2}, we have
\begin{equation}\label{twisted-schur}
\begin{aligned}
 \frac1{|W_{\mathcal L}|}\sum_w h_A(w)h_B^\vee(w)&=\delta_{A,B},\\
 \sum_A h_A(w)h_A^\vee(w')
 &=\operatorname{Tr}\bigl(a\mapsto\theta_w\iota(a)\theta_{w'}^{-1},
                         \mathscr A\bigr).
\end{aligned}
\end{equation}
Only $F$-equivariant summands occur in these formulas.

\begin{proposition}
Fix a Howe factorization of $\phi$, and set
\[
 H_0=(G^0)_0,\qquad
 \mathcal K_0=\mathfrak{Ind}_{T_0}^{H_0}\mathcal L_{\phi_{-1}}.
\]
Let $\Psi^+=\Psi_n^\dag \cdots \Psi_1^\dag$ denote the composition of the operations defined in \cite[\S4.2]{INY25}, then
$\mathcal K\cong\Psi_+(\mathcal K_0)$, and this functor gives
isomorphisms
\[
 \operatorname{End}(\mathcal K_0)\xrightarrow{\sim}\mathscr A,\qquad
 j_A:\operatorname{Hom}(A_0,\mathcal K_0)
       \xrightarrow{\sim}\operatorname{Hom}(A,\mathcal K),
 \quad A=\Psi_+(A_0),
\]
where $A_0$ runs through the simple summands of $\mathcal K_0$.
\end{proposition}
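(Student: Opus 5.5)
The proposition has two parts: the identification $\mathcal K\cong\Psi_+(\mathcal K_0)$, and the fact that $\Psi_+$ induces isomorphisms on endomorphism and multiplicity spaces. I would import the first from \cite[\S4--5]{INY25} and deduce the second from full faithfulness of $\Psi_+$ together with a dimension count.

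\emph{Step 1: the isomorphism $\mathcal K\cong\Psi_+(\mathcal K_0)$.} By \cite[\S4.2]{INY25}, each $\Psi_i^\dag$ is built from pullback along a smooth quotient map, tensoring with a rank-one multiplicative local system attached to $\phi_{i-1}$, and compactly supported induction along a Yu-type parabolic-like subgroup. Induction in stages along the filtration $\mathcal K_{\phi,r}$ and the Iwahori-type group $I=\mathcal I_{\phi,U,r}$ then gives
\[
\Psi_+(\mathcal K_0)\cong\psi_!\beta^*\bar{\mathcal L}[\dim G].
\]
Combined with \Cref{iw}, this yields $\mathcal K\cong\Psi_+(\mathcal K_0)$.

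\emph{Step 2: full faithfulness.} The key input is that each $\Psi_i^\dag$ is fully faithful on the relevant category of semisimple perverse sheaves, which I would take from \cite[\S5]{INY25}. The heuristic reason is that the Heisenberg-type twist is nondegenerate, so the extra convolution factors contribute only a one-dimensional endomorphism space. Given this, $\Psi_+$ induces an injective algebra map
\[
\operatorname{End}(\mathcal K_0)\to\operatorname{End}(\mathcal K)=\mathscr A.
\]

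\emph{Step 3: bijectivity on endomorphisms.} Both sides have the same dimension. Indeed, $\operatorname{End}(\mathcal K_0)\cong\overline{\mathbb Q}_\ell[W_{H_0,\mathcal L_{\phi_{-1}}}]$ by classical depth-zero theory, and $\mathscr A\cong\overline{\mathbb Q}_\ell[W_{\mathcal L}]$ by \eqref{isotypic}. It therefore suffices to show $W_{\mathcal L}=W_{H_0,\mathcal L_{\phi_{-1}}}$. Here the positive-depth part $\phi|_{T^{0+}}$ is genericized by the Howe factorization: an element of $N_G(T)/T$ that fixes $\mathcal L$ must fix each $\phi_i$, hence lies in the Weyl group of $G^0$, and then it fixes $\mathcal L_{\phi_{-1}}$. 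Dimensions agree, so the injective map is an isomorphism.

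\emph{Step 4: simples and multiplicity spaces.} Since $\Psi_+$ is fully faithful and additive, it sends a simple summand $A_0$ of $\mathcal K_0$ to an indecomposable summand $A=\Psi_+(A_0)$ of $\mathcal K$ with $\operatorname{End}(A)=\overline{\mathbb Q}_\ell$. Because $\mathcal K$ is semisimple, $A$ is simple, and distinct $A_0$ give non-isomorphic $A$. Full faithfulness then identifies $\operatorname{Hom}(A_0,\mathcal K_0)$ with $\operatorname{Hom}(A,\mathcal K)$ via $j_A$, compatibly with the module structures under $\operatorname{End}(\mathcal K_0)\cong\mathscr A$. By the decomposition in \eqref{isotypic}, every simple summand of $\mathcal K$ arises in this way.

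The main obstacle is the Weyl group equality in Step 3, together with making sure that the fully faithful statement of \cite{INY25} applies to all of $\mathcal K_0$ and not only to regular $\phi$. I would first prove the equality $W_{\mathcal L}=W_{H_0,\mathcal L_{\phi_{-1}}}$ directly from the genericity conditions on the $\phi_i$ in \cite[Proposition 3.6.7]{Kaletha_19}. I would then use the equality to bypass any regularity hypothesis by the dimension count above.
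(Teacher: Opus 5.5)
Your proposal is correct and follows essentially the same route as the paper, which imports $\mathcal K\cong\Psi_+(\mathcal K_0)$ from \cite[\S4]{INY25} and obtains both isomorphisms directly from the full faithfulness of the functors $\Psi_i^\dag$. Your Step 3 is not needed: full faithfulness already makes $\operatorname{End}(\mathcal K_0)\to\mathscr A$ bijective, not merely injective, and the dimension count could not in any case remove the reliance on faithfulness that you identify as the main obstacle.
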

\begin{proof}
See \cite[\S4]{INY25}. Note the functors $\Psi_i^\dag$ for $1 \le i \le n$ are fully faithful.
\end{proof}

\begin{remark}\label{depth-zero-traces}
Choose the Weil structures on $A$ and $\mathcal K$ compatibly with
those on $A_0$ and $\mathcal K_0$. satisfying
$j_A\theta_w^0=\theta_wj_A$ and $j_A\sigma_{A_0}=\sigma_Aj_A$. Then
\[
 h_A^\vee(w)
 =\operatorname{Tr}\bigl((\theta_w^0\sigma_{A_0})^{-1},
                  \operatorname{Hom}(A_0,\mathcal K_0)\bigr).
\]
\end{remark}

\begin{proposition}\label{fourier-prop}
For every $F$-equivariant simple summand $A$ of $\mathcal{K}$,
\begin{equation}\label{fourier}
 \chi_{A,\Phi_A}
 =\frac1{|W_{\mathcal L}|}\sum_{w\in W_{\mathcal L}}
          h_A^\vee(w)\chi_{\mathcal K,\theta_w\circ\Phi}.
\end{equation}
When $q$ is sufficiently large,
\begin{equation}\label{DL-fourier}
 \chi_{A,\Phi_A}
 =\frac{(-1)^d}{|W_{\mathcal L}|}\sum_{w\in W_{\mathcal L}}
h_A^\vee(w)\mathcal R_{T_w}^G(\phi_w),
\end{equation}
\begin{equation*}
 \mathcal R_{T_w}^G(\phi_w)
 =(-1)^d\sum_{A\in\operatorname{Irr}(\mathcal K)^F}
   h_A(w)\chi_{A,\Phi_A}
\end{equation*}
where $z_w\in G$ with $z_w^{-1}F(z_w)=\dot w^{-1}$, and $(T_w,\phi_w)$ is obtained from $(T,\phi)$ conjugated by $z_w$.

\end{proposition}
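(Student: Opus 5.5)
The plan is to prove \eqref{fourier} first, directly from the isotypic decomposition \eqref{isotypic}, and then deduce the two Deligne--Lusztig formulas from \Cref{LC} applied to twisted tori.

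\textbf{Step 1: the decomposition formula \eqref{fourier}.} Let $w\in W_{\mathcal L}$. The twisted Weil structure $\theta_w\circ\Phi$ on $\mathcal K\cong\bigoplus_A A\otimes V_A$ acts as follows. Summands with $F^*A\not\cong A$ are permuted among themselves, so they contribute zero to the trace at any $F$-fixed point. On an $F$-equivariant summand $A\otimes V_A$ the structure factors as $\Phi_A\otimes(\theta_w\sigma_A)$; this is the defining property of $\sigma_A$, since $\sigma_A(v)=\Phi F^*(v)\Phi_A^{-1}$. Taking stalk traces therefore gives
\[
\chi_{\mathcal K,\theta_w\circ\Phi}=\sum_{A\in\operatorname{Irr}(\mathcal K)^F}h_A(w)\,\chi_{A,\Phi_A}.
\]
Multiply by $h_B^\vee(w)$ and average over $W_{\mathcal L}$. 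The first relation in \eqref{twisted-schur} then isolates $\chi_{B,\Phi_B}$, which is \eqref{fourier}.

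\textbf{Step 2: identifying the twisted characteristic function.} The key claim is
\[
\chi_{\mathcal K,\theta_w\circ\Phi}=\chi_{\mathfrak{Ind}_{T_w}^G\mathcal L_{\phi_w}}.
\]
To see this, use the Frobenius structure on $Y$: under $\theta_w\circ\Phi$ the relevant Frobenius on $Y$ is $(g,xT)\mapsto(F(g),F(x)\dot w^{-1}T)$. Transport by $x\mapsto z_wx$, using $z_w^{-1}F(z_w)=\dot w^{-1}$, identifies this with the standard Frobenius on the incidence variety $Y_{T_w}$ for $T_w=z_wTz_w^{-1}$ with the local system $\mathcal L_{\phi_w}$. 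Both sides are intermediate extensions from $G_{\rm vreg}$, so it suffices to match Weil structures there and then invoke uniqueness of the IC extension.

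Once the claim holds, \Cref{LC} applied to $(T_w,\phi_w)$ for $q$ large gives $(-1)^d\mathcal R^G_{T_w}(\phi_w)$. Substituting into \eqref{fourier} yields \eqref{DL-fourier}, and substituting into the expansion of Step 1 yields the last formula, with the sign $(-1)^d$ moved to the other side.

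\textbf{Main obstacle.} The hard part is Step 2, specifically the compatibility of the Howe factorization and the Iwahori-type group $I$ under twisting. \Cref{LC} was proved for the torus $T$ with its fixed Howe data, and $\mathcal R$ depends on that data through $Z_r$. The group $T_w$ is not split relative to the original Borel subgroup, so I must check that the transported Howe factorization $z_w(\mathbb G^i)z_w^{-1}$ is $F$-stable and gives a valid Howe factorization of $\phi_w$; this uses $w\in W_{\mathcal L}$, which normalizes the Levi subgroups $\mathbb G^i$ arising from $\phi$. I must also check that the proof of \Cref{LC}, which goes through \Cref{JD}, \Cref{Qpos} and \cite{LN26}, applies verbatim to $(T_w,\phi_w)$. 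I would verify this by rerunning the Jordan decomposition argument with all objects conjugated by $z_w$, noting that every step was intrinsic to the pair consisting of the torus and its character.
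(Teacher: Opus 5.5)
Your proposal is correct and follows the paper's argument: take traces in \eqref{isotypic} with the twisted structure $\theta_w\circ\Phi$ (non-$F$-stable summands contribute zero), isolate $\chi_{A,\Phi_A}$ using the first relation of \eqref{twisted-schur}, then identify $(\mathcal K,\theta_w\circ\Phi)$ with the induced sheaf of the transported pair $(T_w,\phi_w)$ by conjugating with $z_w$, and apply \Cref{LC}. The paper leaves implicit, behind its appeal to \cite[\S10.6]{Lus_Ch2}, the Howe-data compatibility you flag. Your only slip is a convention: the transport should be $xT\mapsto xz_w^{-1}T_w$ (right multiplication, not $x\mapsto z_wx$), and with $z_w^{-1}F(z_w)=\dot w^{-1}$ it induces the Frobenius $xT\mapsto F(x)\dot w\,T$ on $Y$, so the choice of $\dot w$ versus $\dot w^{-1}$ must match the one defining $\theta_w$.
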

\begin{proof}
Taking traces on both sides of (\ref{isotypic}) gives
$\chi_{\mathcal K,\theta_w\Phi}=\sum_Bh_B(w)\chi_{B,\Phi_B}$.
Multiply by $h_A^\vee(w)$, sum over $w$, and use
(\ref{twisted-schur}) we obtain (\ref{fourier}). As in \cite[\S10.6]{Lus_Ch2}, conjugation
by $z_w$ identifies $(\mathcal K,\theta_w\Phi)$ with induction
from the transported induction pair. By \Cref{LC} the second
and third formula follow.
\end{proof}

\begin{corollary}
Assume $q$ is sufficiently large.
Write $d_0=\dim G^0_0$ and $K=\mathcal K_{\phi,r}$, and let $\kappa_\phi$
denote the character of $K^F$ as in
\cite[\S7.1]{Nie_24}. Then we have
\begin{equation*}
 \chi_{A,\Phi_A}
 =(-1)^{d-d_0}\operatorname{Ind}_{K^F}^{G^F}
       \bigl(\kappa_\phi\otimes\operatorname{Inf}_{(G^0_0)^F}^{K^F}
                                \chi_{A_0,\Phi_{A_0}}\bigr),
\end{equation*}
\end{corollary}
\begin{proof}
Write
$a_w=|W_{\mathcal L}|^{-1}h_A^\vee(w)$.
We have the two Fourier expansions
\[
 \chi_A=(-1)^d\sum_w a_w\mathcal R_{T_w}^G(\phi_w),\qquad
 \chi_{A_0}=(-1)^{d_0}\sum_w a_wR_{T_{0,w}}^{H_0}(\phi_{-1,w}).
\]
By \cite[Theorem~7.5]{Nie_24}, we have
\[
 \mathcal R_{T_w}^G(\phi_w)
 =\operatorname{Ind}_{K^F}^{G^F}
       (\kappa_\phi\otimes\operatorname{Inf}_{H_0^F}^{K^F}R_{T_{0,w}}^{H_0}(\phi_{-1,w})).
\]
The proof is done.
\end{proof}

\subsection{Orthogonality of simple summands}
Write $\mathbb D^0A=(\mathbb DA)(-d)$, and
write $\Phi_A^\vee$ for the corresponding dual structure. Recall
from \cite[\S7.4]{Lus_Ch2} that for simple induced sheaves
\begin{equation}\label{geom-orth}
\begin{gathered}
 H_c^i(G,A\otimes B)=0\quad(i>0),\\
 H_c^0(G,A\otimes B)=0\ \Longleftrightarrow\ B\not\cong\mathbb DA.
\end{gathered}
\end{equation}
In the matching case $B\cong\mathbb{D}A$ the degree-zero space is one-dimensional by Schur's lemma.

Write $f_w=\chi_{\mathcal K,\theta_w\Phi}$ and let $f_{w'}^\vee$
be the function of $\mathbb D^0\mathcal K$ with the structure
dual to $\theta_{w'}\Phi$. Applying (\ref{ind-orth}) to the rational
twists gives, as in \cite[\S10.6]{Lus_Ch2},
\begin{equation}\label{twisted-pairing}
\begin{aligned}
 \langle f_w,f_{w'}^\vee\rangle_{G^F}
 &=\operatorname{Tr}\bigl(a\mapsto\theta_w\iota(a)\theta_{w'}^{-1},
                         \mathscr A\bigr)\\
 &=\sum_C h_C(w)h_C^\vee(w').
\end{aligned}
\end{equation}
Indeed, the matching rational transporter cosets are the basis
elements fixed by $\theta_v\mapsto\theta_w\iota(\theta_v)\theta_{w'}^{-1}$;
unit normalization gives each fixed element coefficient one.
The second equality is (\ref{twisted-schur}).

\begin{theorem}\label{simple-orth}
Let $A_1,A_2$ be $F$-equivariant simple induced sheaves. Then
\[
 \langle \chi_{A_1,\Phi_1},\chi_{A_2,\Phi_2}\rangle_{G^F}
 =\begin{cases}
 0,&A_2\not\cong\mathbb D^0A_1,\\
 1,&(A_2,\Phi_2)=(\mathbb D^0A_1,\Phi_1^\vee).
 \end{cases}
\]
Moreover, If $\Phi_2=c\Phi_1^\vee$ under a geometric identification, the
second value is $c$.
\end{theorem}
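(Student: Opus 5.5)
We reduce the statement to the twisted pairing formula \eqref{twisted-pairing} and the twisted Schur relations \eqref{twisted-schur}, following \cite[\S10.4--10.6]{Lus_Ch2}. First, if $A_1$ and $A_2$ are not summands of induced complexes related by duality, the vanishing is read off from \eqref{geom-orth}. Write $A_j\subset\mathcal K_j=\mathfrak{Ind}_{T_j}^G\mathcal L_j$. Apply the Grothendieck--Lefschetz trace formula to $R\Gamma_c(G,A_1\otimes A_2)$:
\[
\sum_{g\in G^F}\chi_{A_1,\Phi_1}(g)\chi_{A_2,\Phi_2}(g)=\operatorname{Tr}\bigl(F,H^0_c(G,A_1\otimes A_2)\bigr),
\]
using the vanishing of $H^i_c$ for $i>0$ together with the fact that negative degrees contribute only lower order terms. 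More precisely, we run the same argument as in \Cref{orth-ind}: compare for all $F^m$, obtain a main term plus $O(q^{-m/2})$, and then use the Vandermonde argument to pass to $m=1$. If $A_2\not\cong\mathbb D^0A_1$, then $H^0_c=0$ by \eqref{geom-orth}, and the Vandermonde argument forces the exact pairing to vanish for every $m$, in particular $m=1$. It is cleaner, though, to obtain the vanishing from the algebraic identity below.

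Concretely, we work algebraically instead. By \eqref{fourier} we have
\[
\chi_{A_1,\Phi_1}=\tfrac1{|W_{\mathcal L}|}\sum_w h^\vee_{A_1}(w)f_w.
\]
For the dual side, we dualize \eqref{isotypic}: $\mathbb D^0\mathcal K\cong\bigoplus_B\mathbb D^0B\otimes V_B^*$, with the induced dual structures. Taking traces of $(\theta_{w'}\Phi)^\vee$ gives
\[
f^\vee_{w'}=\sum_B h^\vee_B(w')\,\chi_{\mathbb D^0B,\Phi_B^\vee}.
\]
The key check here is that the trace of the contragredient of $\theta_{w'}\sigma_B$ on $V_B^*$ is $h_B^\vee(w')$. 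Inverting this via \eqref{twisted-schur} yields
\[
\chi_{\mathbb D^0B,\Phi_B^\vee}=\tfrac1{|W_{\mathcal L}|}\sum_{w'}h_B(w')f^\vee_{w'}.
\]
Now substitute \eqref{twisted-pairing}:
\[
\langle\chi_{A},\chi_{\mathbb D^0B}\rangle=\tfrac1{|W|^2}\sum_{w,w'}h^\vee_A(w)h_B(w')\sum_C h_C(w)h_C^\vee(w').
\]
Applying the first relation of \eqref{twisted-schur} twice (once in $w$, once in $w'$) collapses this to $\sum_C\delta_{A,C}\delta_{C,B}=\delta_{A,B}$. This handles the case where both sheaves come from the same $\mathcal K$.

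When $A_2$ is a summand of $\mathcal K'$ with $(T',\mathcal L')$ not $W$-conjugate to the dual pair, \Cref{orth-ind} (applied to all twists $f_w,f'_{w'}$) gives $\langle f_w,f'_{w'}\rangle=0$, since the set $\mathcal N(\phi,\phi')$ is empty for the twisted forms. Hence the expansion above vanishes. If instead $\mathcal K'\cong\mathbb D^0\mathcal K$ up to conjugation, we transport along that isomorphism to reduce to the previous case. Finally, the scalar statement is immediate from bilinearity: replacing $\Phi_2$ by $c\Phi_1^\vee$ multiplies $\chi_{A_2}$ by $c$.

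The main obstacle is the dual-side bookkeeping. We must show that the dual Weil structure on $\mathbb D^0\mathcal K$ decomposes with the coefficients $h_B^\vee$, so that the $\iota$-twisted trace in \eqref{twisted-pairing} matches exactly. We will verify this by computing on the open set $G_{\rm vreg}$, where everything is a local system on the $W$-torsor $Y$, and then extend by \eqref{glob}-type uniqueness of intermediate extensions.
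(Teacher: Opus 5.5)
Your proposal is essentially the paper's proof. You expand $\chi_{A}$ by \eqref{fourier} and $\chi_{\mathbb D^0B}$ with coefficients $h_B(w')$ in the dual twists $f^\vee_{w'}$, insert \eqref{twisted-pairing}, and collapse with the first relation of \eqref{twisted-schur} in $w$ and then in $w'$; the non-conjugate case vanishes via \Cref{orth-ind}, and the scalar statement follows by bilinearity.

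Your derivation of the dual expansion, by dualizing \eqref{isotypic} and inverting, is only a more explicit version of what the paper asserts directly. Your preliminary trace-formula/Vandermonde remark can be dropped: for simple summands it would lack the integrality input that \Cref{orth-ind} relies on.
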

\begin{proof}
If the inductions pair for $A_1$ and $\mathbb D^0A_2$ are not
geometrically conjugate, by \Cref{fourier-prop} the left hand side is $0$. We may assume they have a common induced
sheaf complex $\mathcal K$, and write $A_1=A$, $A_2=\mathbb D^0B$
with the dual structure. The two expansions are
\[
\begin{aligned}
 \chi_{A,\Phi_A}&=|W_{\mathcal L}|^{-1}\sum_w h_A^\vee(w)f_w,\\
 \chi_{\mathbb D^0B,\Phi_B^\vee}
       &=|W_{\mathcal L}|^{-1}\sum_{w'}h_B(w')f_{w'}^\vee.
\end{aligned}
\]
Insert (\ref{twisted-pairing}) and apply (\ref{twisted-schur})
first in $w$ and then in $w'$. The result is $\delta_{A,B}$. Scaling the second structure
gives the final assertion.
\end{proof}

\subsection{A perspective toward almost characters on parahoric subgroups}

Set
$$
\mathcal C(G^F)
=
\{f:G^F\to\overline{\mathbb Q}_\ell
\mid
f(xgx^{-1})=f(g),\ \forall x,g\in G^F\}.
$$

Let
$
\mathcal C^{\mathrm{DL}}(G^F)\subset \mathcal C(G^F)
$
be the subspace spanned by the irreducible constituents of deep level Deligne--Lusztig characters, and let
$
\mathcal C^{\mathrm{CS}}(G^F)\subset \mathcal C(G^F)
$
be the subspace spanned by the characteristic functions of $F$-equivariant simple induced sheaves.

Note that, in contrast to the classical setting, $\mathcal C^{\mathrm{DL}}(G^F)$ need not coincide with the full space $\mathcal C(G^F)$. Based on Lusztig's explicite computations in \cite{Lusztig_04} Stasinski showed in \cite{Stansinski} that certain "nilpotent" representations of $\mathrm{SL}_2(\mathbb{F}_q[t]/t^2)$ were unrealisable from irreducible components of deep level Deligne-Lusztig character.

Thus, although \Cref{LC} identifies the characteristic function of  character sheaves with deep level Deligne--Lusztig characters, it does not by itself determine the relation between $\mathcal C^{\mathrm{CS}}(G^F)$ and $\mathcal C^{\mathrm{DL}}(G^F)$.

Fortunately, for a fixed $F$-stable induction pair and sufficiently
large $q$, \eqref{DL-fourier} gives
\[
\operatorname{span}
   \{\chi_{A,\Phi_A}:A\in\operatorname{Irr}(\mathcal K)^F\}
 =\operatorname{span}\{\mathcal R_{T_w}^G(\phi_w):w\in W_{\mathcal L}\},.
\]
Thus the simple induced sheaf functions give an orthogonal basis for $\mathcal{C}^{\mathrm{UN}}(G^F)$.

This suggests considering more general induction constructions (like in \cite{Lus_Ch2}) from some Levi data, may enlarge $\mathcal C^{\mathrm{CS}}(G^F)$ to the full space
$
\mathcal C^{\mathrm{CS},\,\mathrm{gen}}(G^F)
=
\mathcal C^{\mathrm{DL}}(G^F)
$
, and the characteristic function $\chi_A$ of simple general induced sheaves form another orthogonal basis for $\mathcal C^{\mathrm{UN}}(G^F)$. Which implies
$
\mathcal C^{\mathrm{UN}}(G^F)=\mathcal C^{\mathrm{CS}}(G^F)
\subset
\mathcal C^{\mathrm{DL}}(G^F).
$

Moreover, in analogy with the classical theory of almost characters, It would be interesting to realize $\chi_A$ by a combinatorial construction.

\section{Application: $G^F$-invariants and symmetric multiplicities}
\label{quad}
We now apply the comparison theorem to the distinction problem for the
subfield pair $(G^{F^2},G^F)$. For a representation $\rho$ of $G^{F^2}$,
the quantity of interest is
\[
 \dim\operatorname{Hom}_{G^F}(\rho,1)
 =\dim\rho^{G^F}
 =\frac{1}{|G^F|}\sum_{g\in G^F}\operatorname{Tr}(g,\rho).
\]
In this section, the character sheaves we consider are $F^2$-equivariant. 
\subsection{}
Let $(\mathcal A,\Phi_{\mathcal A})$ be an $F^2$-equivariant sheaf complex on
$G$, i.e, $\Phi_{\mathcal A}:(F^2)^*\mathcal A\cong \mathcal A$. We define an isomorphism $\widetilde\Phi_{\mathcal A}:F^*(F^*\mathcal A\otimes\mathcal A)\to F^*\mathcal A\otimes\mathcal A$ as
the $F$-Weil structure
\begin{equation}\label{swap}
 F^*(F^*\mathcal A\otimes\mathcal A)
 \xrightarrow{\Phi_{\mathcal A}\otimes1}
 \mathcal A\otimes F^*\mathcal A
 \xrightarrow{c}F^*\mathcal A\otimes\mathcal A,
\end{equation}
where $c$ is the derived interchange. 

\begin{lemma}\label{trace-swap-lemma}
We have
\begin{equation*}
 \sum_{g\in G^F}\chi_{\mathcal A,\Phi_{\mathcal A}}(g)
       =\operatorname{Tr}(\widetilde\Phi_{\mathcal A},R\Gamma_c(G,F^*\mathcal A\otimes\mathcal A)).
\end{equation*}
\end{lemma}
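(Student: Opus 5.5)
The plan is to apply the Grothendieck--Lefschetz trace formula to the $F$-Weil complex $(F^*\mathcal A\otimes\mathcal A,\widetilde\Phi_{\mathcal A})$ on $G$:
\[
 \operatorname{Tr}\bigl(\widetilde\Phi_{\mathcal A},R\Gamma_c(G,F^*\mathcal A\otimes\mathcal A)\bigr)
 =\sum_{g\in G^F}\chi_{F^*\mathcal A\otimes\mathcal A,\widetilde\Phi_{\mathcal A}}(g).
\]
This reduces the lemma to a pointwise identity: for each $g\in G^F$,
\[
\chi_{F^*\mathcal A\otimes\mathcal A,\widetilde\Phi_{\mathcal A}}(g)=\chi_{\mathcal A,\Phi_{\mathcal A}}(g).
\]
This is the classical ``trace of a swap is the trace of the composite'' trick, in the spirit of Lusztig's treatment of the $F^2$ versus $F$ problem.

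For the pointwise computation, fix $g\in G^F$ and write $M=\mathcal A_g$, viewed as a graded complex (or, after passing to cohomology sheaves, graded vector spaces). Since $F(g)=g$, we have $(F^*\mathcal A)_g=\mathcal A_{F(g)}=M$. The Weil structure $\Phi_{\mathcal A}$ then induces an endomorphism $\varphi:=(\Phi_{\mathcal A})_g$ of $M$, namely the map
\[
(F^{2*}\mathcal A)_g=M\longrightarrow M.
\]
Unwinding \eqref{swap} at $g$, the stalk of $F^*(F^*\mathcal A\otimes\mathcal A)$ is $M\otimes M$. The map $\Phi_{\mathcal A}\otimes 1$ acts as $\varphi\otimes 1$, and $c$ is the signed swap $\tau(x\otimes y)=(-1)^{|x||y|}y\otimes x$. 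So $\widetilde\Phi_{\mathcal A}$ acts on the stalk by $\tau\circ(\varphi\otimes1)$.

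The key linear-algebra fact is that for a graded space $M$ and endomorphism $\varphi$, the supertrace of $\tau\circ(\varphi\otimes 1)$ on $M\otimes M$ equals the supertrace of $\varphi$ on $M$. To check this, use a homogeneous basis $e_i$ and write $\varphi e_i=\sum_j a_{ji}e_j$. Then
\[
\tau(\varphi\otimes1)(e_i\otimes e_k)=\sum_j a_{ji}\,(-1)^{|e_j||e_k|}\,e_k\otimes e_j.
\]
The diagonal coefficient of $e_i\otimes e_k$ requires $k=i$ and $j=i$, giving $a_{ii}(-1)^{|e_i|}$. The supertrace weights this by the degree of $e_i\otimes e_i$, which is even. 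The total is therefore $\sum_i(-1)^{|e_i|}a_{ii}=\chi_{\mathcal A,\Phi_{\mathcal A}}(g)$.

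The main obstacle is bookkeeping, not conceptual. First, one must be careful that the Künneth identification of stalks of $F^*\mathcal A\otimes\mathcal A$ with $\mathcal H^*(\mathcal A_g)\otimes\mathcal H^*(\mathcal A_g)$ is compatible with the sign convention in the derived interchange $c$. Second, the $F$-pullback of a stalk at an $F$-fixed point must be identified with the stalk itself, via the canonical isomorphism $(F^*\mathcal K)_g\cong\mathcal K_{F(g)}$, consistently in both tensor factors; this is what makes the stalk of $F^*(F^*\mathcal A\otimes\mathcal A)$ equal to $M\otimes M$ rather than $M\otimes M$ with some extra twist.

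To handle this, I would work with the Euler characteristic in the Grothendieck group of graded $\varphi$-modules, where the Künneth formula and the Koszul sign rule are standard. After that, the trace formula for the Weil complex gives the lemma.
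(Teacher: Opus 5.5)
Your proposal is correct and follows the paper's argument. The paper also works at a stalk $g\in G^F$, where $(\widetilde\Phi_{\mathcal A})_g$ is the Koszul-signed swap composed with $(\Phi_{\mathcal A})_g\otimes 1$, observes that its supertrace on $\mathcal A_g\otimes\mathcal A_g$ equals that of $(\Phi_{\mathcal A})_g$, and then sums over $G^F$ by the Grothendieck--Lefschetz trace formula without stating it; your explicit basis computation and explicit appeal to the trace formula fill in exactly these unstated steps.
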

\begin{proof}
Consider a stalk $\mathcal{A}_g$, here $g\in G^F \subset G^{F^2}$. We have 
\begin{align*}
(\widetilde\Phi_{\mathcal A})_g:\mathcal{A}_{F^2(g)}\otimes \mathcal{A}_{F(g)}\to \mathcal{A}_{F(g)}\otimes \mathcal{A}_{g}\\
v\otimes w \mapsto (-1)^{|v||w|}w\otimes (\Phi_{\mathcal A})_g(v)
\end{align*}
hence the (derived) trace of
$(\widetilde\Phi_{\mathcal A})_g$ on $\mathcal{A}_g\otimes \mathcal{A}_g$ is the trace of $(\Phi_{\mathcal A})_g$ on $\mathcal{A}_g$ as required. 
\end{proof}
Throughout this section,
let $A$ be an $F^2$-equivariant simple induced sheaf with $F^2$-equivariant Weil structure $\Phi_{A}$ and write $A|_{G_{\mathrm{vreg}}}=E[d]$ which is a local system.

Suppose $F^*A\cong\mathbb D^0A$. Choose
$\alpha:F^*A\simeq\mathbb D^0A$ and define
\begin{equation*}
 \Phi_A^{\mathrm{can}}
       =\mathbb D^0(\alpha^{-1})\circ F^*\alpha:
                  (F^2)^*A\longrightarrow A.
\end{equation*}
The restriction of
\(\alpha\) is an isomorphism
\(
\alpha_{|E}:F^*E\xrightarrow{\sim}E^\vee.
\)
Let \(\delta_E:E\xrightarrow{\sim}E^{\vee\vee}\) be the standard
isomorphism. We have
\(
\Phi_{|E}^{\mathrm{can}}
=
\delta_E^{-1}\circ(\alpha^{-1}_{|E})^\vee\circ F^*\alpha_{|E}
\)
where we use the natural identification
\(F^*(E^\vee)\simeq(F^*E)^\vee\).
The isomorphism \(\Phi_A^{\mathrm{can}}\) is defined as the unique
middle extension of \(\Phi_E^{\mathrm{can}}[d]\). 

Since \(A\) is simple, any \(F^2\)-equivariant structure on \(A\)
differs from \(\Phi_A^{\mathrm{can}}\) by a scalar. After rescaling,
we may therefore assume that
\(\Phi_A=\Phi_A^{\mathrm{can}}\).

\begin{proposition}\label{F2-simple}
Suppose $q$ is sufficiently large, We have
\[
\begin{aligned}
 |G^F|^{-1}\sum_{g\in G^F}\chi_{A,\Phi_A}(g)&=0&&\text{if }F^*A\not\cong\mathbb DA,\\
 |G^F|^{-1}\sum_{g\in G^F}\chi_{A,\Phi_A}(g)&=(-1)^d
       &&\text{if }F^*A\cong\mathbb DA.
\end{aligned}
\]
\end{proposition}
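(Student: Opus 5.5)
Following Lusztig's treatment of the subfield problem, the plan is to combine \Cref{trace-swap-lemma} with the cohomology vanishing \eqref{geom-orth}. By \Cref{trace-swap-lemma},
\[
 \sum_{g\in G^F}\chi_{A,\Phi_A}(g)=\operatorname{Tr}\bigl(\widetilde\Phi_A,R\Gamma_c(G,F^*A\otimes A)\bigr).
\]
Both $F^*A$ and $A$ are simple induced sheaves; $F^*A$ is again a summand of the induced complex attached to $F^*$ of the pair. So \eqref{geom-orth} gives $H^i_c(G,F^*A\otimes A)=0$ for $i>0$, and $H^0_c=0$ unless $F^*A\cong\mathbb DA$. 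In the case $F^*A\not\cong\mathbb DA$ the whole complex vanishes, and the sum is $0$. This part needs no largeness of $q$.

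Now suppose $F^*A\cong\mathbb DA$, so $F^*A\cong\mathbb D^0A$ up to a Tate twist. Then $H^0_c(G,F^*A\otimes A)$ is one-dimensional, and we identify it with $\operatorname{Hom}(\mathbb D^0A,\mathbb D^0A)^\vee(-d)$ via $\alpha$. I would use the same trace/duality identification as in the proof of \Cref{orth-ind}, namely $H^0_c(G,\mathbb D^0A\otimes A)\cong\overline{\mathbb Q}_\ell(-d)$ by the pairing. It then remains to compute the eigenvalue of $\widetilde\Phi_A$ on this line.

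Write $\widetilde\Phi_A=c\circ(\Phi_A\otimes 1)$ and substitute $\Phi_A=\Phi_A^{\mathrm{can}}=\mathbb D^0(\alpha^{-1})\circ F^*\alpha$. Transported by $\alpha\otimes 1$ to $\mathbb D^0A\otimes A$, the structure $\widetilde\Phi_A$ becomes the $F$-structure obtained from the evaluation pairing $\mathbb D^0A\otimes A\to\omega(-d)$ composed with the swap. The $\alpha$ and $\alpha^{-1}$ cancel, which is exactly why the canonical normalization was chosen.

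The sign is the subtle point, and I expect it to be the main obstacle. The swap $c$ carries the Koszul sign $(-1)^{|v||w|}$. Restricted to $G_{\mathrm{vreg}}$, where $A=E[d]$, the cohomology is concentrated in one degree and the swap contributes $(-1)^{d\cdot d}=(-1)^d$. Otherwise $\delta_E$ and $\operatorname{ev}$ are compatible with no extra sign, so $F$ acts on $H^0_c$ by $(-1)^dq^d$. To make this rigorous, I would compute on the open dense $G_{\mathrm{vreg}}$, where everything is an honest local system. There $H^{2d}_c(G_{\mathrm{vreg}},F^*E\otimes E)\to H^0_c(G,F^*A\otimes A)$ is an isomorphism by the middle extension property (the top-degree class lives on the dense open), and $F$ acts on $H^{2d}_c(G_{\mathrm{vreg}},\overline{\mathbb Q}_\ell)$ by $q^d$ on the unique component. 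Care is needed if $G_{\mathrm{vreg}}$ is not connected, since $F$ could permute components, but $G$ is connected, so this should be fine.

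Hence $\sum_{g}\chi_{A,\Phi_A}(g)=(-1)^dq^d$. It remains to compare $q^d$ with $|G^F|$. Here $|G^F|=q^d+(\text{lower terms})$, so we get $(-1)^d+O(q^{-1/2})$ rather than an exact equality. I would then run the same argument as in \Cref{orth-ind}: for $q$ sufficiently large, \Cref{LC} and \Cref{fourier-prop} show that the normalized sum over $G^{F^m}$ is an algebraic integer combination bounded by $|W_{\mathcal L}|$, in fact a rational integer by the expansion into virtual characters. Alternatively, I would redo the computation carefully, keeping all of $R\Gamma_c$ of the twisted complex, so that $H^0_c$ carries the full class $|G^F|$. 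In either route, this forces the exact value $(-1)^d$.
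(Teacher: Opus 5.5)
There is a genuine gap. \eqref{geom-orth} gives $H^i_c(G,F^*A\otimes A)=0$ only for $i>0$, and it controls $H^0_c$. It says nothing about $i<0$, and those groups do not vanish in general. Two depth-zero examples show this:
\begin{itemize}
\item For the constant summand $A=\overline{\mathbb Q}_\ell[d]$ of $\mathfrak{Ind}_T^G\overline{\mathbb Q}_\ell$ one has $R\Gamma_c(G,F^*A\otimes A)=R\Gamma_c(G,\overline{\mathbb Q}_\ell)[2d]$. Its negative degrees carry the lower compactly supported cohomology of $G$; for example $H^{-3}_c\neq 0$ for $\mathrm{SL}_2$.
\item For $\mathrm{SL}_2$, take $A$ constant as above and $B$ the Steinberg summand. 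Then $B\not\cong\mathbb DA$, yet $H^{-1}_c(G,A\otimes B)\neq 0$.
\end{itemize}
So your nondual case (``the whole complex vanishes, no largeness of $q$ needed'') does not follow. In the dual case, the conclusion $\sum_{g\in G^F}\chi_{A,\Phi_A}(g)=(-1)^dq^d$ is false, and it would contradict the statement you are proving, since $q^d\neq|G^F|$. If the proposition holds, the negative degrees must contribute exactly $(-1)^d(|G^F|-q^d)$. For the same reason, your fallback of letting the one-dimensional $H^0_c$ ``carry the full class $|G^F|$'' cannot work, since $\widetilde\Phi_A$ acts on that line by $(-1)^dq^d$.

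What is missing is the two-step mechanism the paper uses in both cases.

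\emph{Weight bound.} $F^*A\otimes A$ has weights $\le 2d$, so every eigenvalue $\lambda$ of $\widetilde\Phi_A$ on $H^i_c$ satisfies $|\lambda|\le q^{d+i/2}$ under every complex embedding. This gives
\[
\Bigl|\sum_{g\in G^F}\chi_{A,\Phi_A}(g)-\delta q^d\Bigr|\le q^{d-1/2}\sum_{i<0}\dim H^i_c(G,F^*A\otimes A).
\]
Hence, for $q$ large, $\bigl||G^F|^{-1}\sum_g\chi_{A,\Phi_A}(g)-\delta\bigr|<|W_{\mathcal L}|^{-1}$ in every embedding.

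\emph{Integrality at the right scale.} By \eqref{DL-fourier} applied to $F^2$,
\[
|W_{\mathcal L}|\,|G^F|^{-1}\sum_g\chi_{A,\Phi_A}(g)=(-1)^d\sum_w h_A^\vee(w)\,\bigl\langle\operatorname{Res}^{G^{F^2}}_{G^F}\mathcal R^{G,F^2}_{T_w}(\phi_w),1\bigr\rangle_{G^F}.
\]
Each pairing is an integer, and $h_A^\vee(w)$ is a sum of roots of unity because $\theta_w\sigma_A$ has finite order. So $|W_{\mathcal L}|(\cdots-\delta)$ is an algebraic integer all of whose conjugates have absolute value $<1$, hence it is $0$.

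Your integrality sketch points in this direction but has two problems. It claims the normalized sum is itself a rational integer, which does not follow, since its coefficients are $h_A^\vee(w)/|W_{\mathcal L}|$. It also lacks the uniform $|W_{\mathcal L}|^{-1}$ estimate, which only the weight bound on the negative-degree cohomology supplies.
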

\begin{proof}
By (\ref{geom-orth}), $H_c^0(G,F^*A\otimes A)$ is zero in the
nondual case and one-dimensional in the dual case. In the latter,
$\widetilde\Phi_{\mathcal A}$ acts on this one-dimensional space by $(-1)^dq^d$. Since $F^*A\otimes A$ has weights at most
$2d$, every eigenvalue $\lambda$ of $\widetilde\Phi_A$ on
$H_c^i(G,F^*A\otimes A)$ satisfies
\(|\lambda|^2\le(q^2)^{(2d+i)/2},
 |\lambda|\le q^{d+i/2}\)
for every complex absolute value. Set $\delta=0$ if $F^*A\not\simeq\mathbb DA$ and $\delta=(-1)^d$ if $F^*A\simeq\mathbb DA$. By \Cref{trace-swap-lemma}
\[
 \left|\sum_{g\in G^F}\chi_{A,\Phi_A}(g)-\delta q^d\right|
 \le q^{d-1/2}\sum_{i<0}\dim H_c^i(G,F^*A\otimes A).
\]
And $q$ is sufficiently large so that
\[
 |W_{\mathcal L}|\left(
 q^{d-1/2}\sum_{i<0}\dim H_c^i(G,F^*A\otimes A)
 +\bigl|q^d-|G^F|\bigr|\right)<|G^F|.\]
we have
\[
 \left|\frac1{|G^F|}\sum_{g\in G^F}\chi_{A,\Phi_A}(g)-\delta\right|
 <|W_{\mathcal L}|^{-1}.
\]
Apply \eqref{DL-fourier} with Frobenius $F^2$.  The
local system $E$ has finite arithmetic monodromy hence
the action of $W_{\mathcal L}$, so
$\theta_w\sigma_A$ has finite order.  Therefore
$h_A^\vee(w)$ is a sum of roots of unity.  Moreover,
\(
 \frac{1}{|G^F|}\sum_{g\in G^F}
 \mathcal R_{T_w}^{G,F^2}(\phi_w)(g)
 =\left\langle
 \operatorname{Res}_{G^F}^{G^{F^2}}
 \mathcal R_{T_w}^{G,F^2}(\phi_w),1
 \right\rangle_{G^F}\in\mathbb Z
\).
It follows that
\[
 |W_{\mathcal L}|\left(
 \frac1{|G^F|}\sum_{g\in G^F}\chi_{A,\Phi_A}(g)-\delta\right)
\]
is an algebraic integer has
absolute value less than one, it must be zero.
\end{proof}

\subsection{The subfield symmetry multiplicity }
Let $(T,\mathcal L)$ be $F^2$-equivariant, with normalized trace character
$\phi:T^{F^2}\to\overline{\mathbb Q}_\ell^\times$, and let
$(\mathcal K,\Phi)$ be the induced complex. Set
\[
 m_{T,\phi}=\left\langle
 \operatorname{Res}_{G^F}^{G^{F^2}}\mathcal R_T^{G,F^2}(\phi),1
 \right\rangle_{G^F}.
\]
For each $F^2$-equivariant simple summand choose $\Phi_A$, adapt
(\ref{swap}) when $F^*A\cong\mathbb D^0A$ and canonical Weil structure, and write
$\sigma_A(v)=\Phi(F^2)^*(v)\Phi_A^{-1}$ on $V_A$.

\begin{theorem}\label{subfield-trace}
When $q$ is sufficiently large,
\begin{equation*}
\begin{aligned}
 m_{T,\phi}
 &=\frac{(-1)^d}{|G^F|}
   \operatorname{Tr}(\widetilde\Phi,
                  R\Gamma_c(G,F^*\mathcal K\otimes\mathcal K))\\
 &=\sum_{\substack{A\in\operatorname{Irr}(\mathcal K)^{F^2}\\
                      F^*A\cong\mathbb D^0A}}
                      \operatorname{Tr}(\sigma_A,V_A).
\end{aligned}
\end{equation*}
In particular,
\begin{equation*}
 |m_{T,\phi}|\leq\sum_A\dim V_A
                 \leq |W_{\mathcal L}|.
\end{equation*}
\end{theorem}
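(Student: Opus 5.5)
The plan is to derive both equalities from the comparison theorem (\Cref{LC}) applied with Frobenius $F^2$, together with \Cref{trace-swap-lemma} and the decomposition \eqref{isotypic}.

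\emph{First equality.} For $q$ large, \Cref{LC} over $\mathbb F_{q^2}$ gives $\chi_{\mathcal K,\Phi}=(-1)^d\mathcal R_T^{G,F^2}(\phi)$ on $G^{F^2}$. Restricting to $G^F$, the definition of $m_{T,\phi}$ yields
\[
m_{T,\phi}=\frac{(-1)^d}{|G^F|}\sum_{g\in G^F}\chi_{\mathcal K,\Phi}(g).
\]
\Cref{trace-swap-lemma} with $\mathcal A=\mathcal K$ turns this sum into $\operatorname{Tr}(\widetilde\Phi,R\Gamma_c(G,F^*\mathcal K\otimes\mathcal K))$.

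\emph{Second equality.} Decompose $\mathcal K\cong\bigoplus_A A\otimes V_A$ as in \eqref{isotypic}, with $F^2$ permuting the simple summands $A$. Then the $F^2$-Weil structure $\Phi$ corresponds, summand by summand, to $\Phi_A\otimes\sigma_A^{-1}$ (up to the convention in $\sigma_A$) on $F^2$-stable summands, and it permutes the non-stable ones. Thus
\[
\chi_{\mathcal K,\Phi}=\sum_{A\in\operatorname{Irr}(\mathcal K)^{F^2}}\operatorname{Tr}(\sigma_A,V_A)\,\chi_{A,\Phi_A}
\]
on $G^{F^2}$, which is the $w=1$ case of the expansion in the proof of \Cref{fourier-prop}. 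Summing over $G^F$ and applying \Cref{F2-simple} to each $F^2$-stable $A$ (with $\Phi_A$ normalized canonically when $F^*A\cong\mathbb D^0A$) gives:
\begin{itemize}
\item $0$ when $F^*A\not\cong\mathbb DA$;
\item $(-1)^d$ when $F^*A\cong\mathbb DA$.
\end{itemize}
Multiplying by $(-1)^d$ gives the stated sum. The two notions $\mathbb DA$ and $\mathbb D^0A$ agree up to a Tate twist, which does not affect the isomorphism class.

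\emph{Bound.} For the final inequality, each $\sigma_A$ has finite order, as in the proof of \Cref{F2-simple}, since $E$ has finite monodromy. Hence $|\operatorname{Tr}(\sigma_A,V_A)|\le\dim V_A$. Moreover, $\sum_A\dim V_A\le\sum_A(\dim V_A)^2=\dim\mathscr A=|W_{\mathcal L}|$.

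\emph{Main obstacle: normalizations.} The delicate step is checking that the $\sigma_A$ defined via $\Phi(F^2)^*(v)\Phi_A^{-1}$ gives exactly $\operatorname{Tr}(\sigma_A,V_A)$, and not its inverse or its dual, once $\Phi_A$ is forced to be the canonical structure $\Phi_A^{\mathrm{can}}$. One must also confirm that the rescaling of $\Phi_A$ is compensated in $\sigma_A$, so that the product $\operatorname{Tr}(\sigma_A,V_A)\chi_{A,\Phi_A}$ is independent of the choice. I would verify this directly on $G_{\mathrm{vreg}}$, where everything is a local system, and then extend by the middle extension.
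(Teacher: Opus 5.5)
Your proposal is correct and follows the paper's proof essentially step for step. The paper also obtains the first equality from the comparison theorem over $\mathbb F_{q^2}$ together with \Cref{trace-swap-lemma}. It gets the second by taking traces in \eqref{isotypic}, noting that non-$F^2$-stable summands contribute zero, and applying \Cref{F2-simple}. The bound comes, as in your argument, from the finite order of $\sigma_A$ and $\sum_A(\dim V_A)^2=|W_{\mathcal L}|$.

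On your normalization worry: with the convention $\sigma_A(v)=\Phi(F^2)^*(v)\Phi_A^{-1}$, the structure $\Phi$ restricts to exactly $\Phi_A\otimes\sigma_A$ (not $\sigma_A^{-1}$) on $A\otimes V_A$. The coefficient is therefore $\operatorname{Tr}(\sigma_A,V_A)$, as you wrote.
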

\begin{proof}
By comparison theorem $\chi_{\mathcal K,\Phi}=(-1)^d\mathcal R_T^{G,F^2}(\phi)$,
so (\ref{trace-swap-lemma}) implies the first equality. Taking traces in
(\ref{isotypic}) and using \Cref{F2-simple} proves the second and 
not $F^2$-equivariant summands have trace zero (permutation action).
The induced Weil structure have finite monodromy on the very regular locus, so the contributing
$\sigma_A$ have finite order. Finally,
\[
 |\operatorname{Tr}(\sigma_A,V_A)|\leq\dim V_A,\qquad
 \sum_A(\dim V_A)^2=\dim\mathscr A=|W_{\mathcal L}|.
\]
These imply the results.
\end{proof}

\begin{corollary}\label{regular-F2}
If $\mathcal L$ is regular, then $|m_{T,\phi}|\leq1$.
$m_{T,\phi}=0$ if
$F^*\mathcal K\not\cong\mathbb D^0\mathcal K$ and 
$m_{T,\phi}=\pm1$ if $F^*\mathcal K\cong\mathbb D^0\mathcal K$.
Write $\rho=\pm\mathcal R_T^{G,F^2}(\phi)$ as the corresponding irreducible
representation, then $\dim\rho^{G^F}$ is respectively $0$ or $1$.
\end{corollary}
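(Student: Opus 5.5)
The corollary follows by specializing \Cref{subfield-trace} to the case where $W_{\mathcal L}$ is trivial. First I will check that regularity of $\mathcal L$ means exactly that $W_{\mathcal L}=\{w\in N_G(T)/T:w^*\mathcal L\cong\mathcal L\}$ is trivial; for deep level characters this is the usual notion of regular (trivial stabilizer in the Weyl group of the transported character). Then, by \eqref{isotypic}, $\mathscr A=\operatorname{End}(\mathcal K)\cong\overline{\mathbb Q}_\ell$, so $\mathcal K$ is simple: there is a single summand $A=\mathcal K$, with $V_A=\operatorname{Hom}(\mathcal K,\mathcal K)$ one-dimensional. The bound $|m_{T,\phi}|\le|W_{\mathcal L}|=1$ is then the last assertion of \Cref{subfield-trace}.

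For the dichotomy I will use the second formula of \Cref{subfield-trace}, which contains a single possible term. If $F^*\mathcal K\not\cong\mathbb D^0\mathcal K$, the sum is empty and $m_{T,\phi}=0$; the conditions $\mathbb D^0$ and $\mathbb D$ differ only by a Tate twist, so they agree at the level of isomorphism of underlying sheaves. If $F^*\mathcal K\cong\mathbb D^0\mathcal K$, then $m_{T,\phi}=\operatorname{Tr}(\sigma_A,V_A)$, where $\sigma_A$ is a scalar acting on a line, and that scalar has finite order by the finite-monodromy remark in the proof of \Cref{subfield-trace}. It is therefore a root of unity. It is also a rational integer, since $m_{T,\phi}$ is a multiplicity of a virtual character, so it equals $\pm1$. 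Alternatively, \Cref{F2-simple} applied directly to $A=\mathcal K$ gives $|G^F|^{-1}\sum_{g\in G^F}\chi_{\mathcal K}(g)=(-1)^d$, and combined with $\chi_{\mathcal K}=(-1)^d\mathcal R_T^{G,F^2}(\phi)$ from \Cref{LC} this yields $m_{T,\phi}=1$ for the canonical normalization. In either reading, $|m_{T,\phi}|=1$.

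Finally, for regular $\phi$ and $q$ large, $\mathcal R_T^{G,F^2}(\phi)=\pm\rho$ with $\rho$ irreducible. For instance, \Cref{simple-orth} together with \Cref{LC} gives norm $1$, and a virtual character of norm $1$ is $\pm$ an irreducible character. Hence $\dim\rho^{G^F}=\pm m_{T,\phi}$ is a nonnegative integer of absolute value at most $1$, equal to $0$ in the nondual case and $1$ in the dual case.

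The main obstacle is the sign bookkeeping: relating $\mathcal R_T^{G,F^2}(\phi)$ to $\rho$, where the sign $\pm$ is absorbed, and checking that the Weil-structure normalizations of $\Phi_A$ versus $\Phi$ are consistent. I will sidestep this by working only with absolute values and the fact that $\dim\rho^{G^F}\ge0$.
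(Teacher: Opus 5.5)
Your argument is correct and is essentially the paper's proof: regularity makes $\mathcal K$ simple with one-dimensional $V_A$, so \Cref{subfield-trace} reduces $m_{T,\phi}$ either to $0$ or to the single scalar $\operatorname{Tr}(\sigma_A,V_A)$, which is a root of unity and also an integer, hence $\pm1$. One caveat: your alternative via \Cref{F2-simple} does not independently give $|m_{T,\phi}|=1$, because $m_{T,\phi}$ is computed with the original structure $\Phi$ and equals exactly the scalar $\Phi\circ(\Phi^{\mathrm{can}})^{-1}=\operatorname{Tr}(\sigma_A,V_A)$, so that route falls back on the same finite-order argument.
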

\begin{proof}
Now $\mathcal K$ is simple and the single multiplicity space is
one dimensional. Its operator in the matching case is a nonzero
root of unity.
\end{proof}

\begin{corollary}\label{central-obstruction}
If $\phi|_{Z(G)^F\cap T}$ is nontrivial, then $m_{T,\phi}=0$.
\end{corollary}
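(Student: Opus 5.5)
The plan is a direct central-character argument on the representation side; the sheaf-theoretic formula in \Cref{subfield-trace} is not needed. By definition,
\[
 m_{T,\phi}=\frac{1}{|G^F|}\sum_{g\in G^F}\mathcal R_T^{G,F^2}(\phi)(g).
\]
The first step is to show that the centre acts on $\mathcal R_T^{G,F^2}(\phi)$ through $\phi$. Let $Z^\natural:=Z(G)^F\cap T$. Each $z\in Z^\natural$ lies in $T^{F^2}$ and is central in $G^{F^2}$. It acts on the variety $Z_r$ defining $\mathcal R_T^{G,F^2}$ in two ways: through the left $G^{F^2}$-action and through the right $T^{F^2}$-action.

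I will check from the construction in \cite{Nie_24} (a Lang-type preimage of a subgroup containing $T$, with $G^{F^2}$ acting on the left and $T^{F^2}$ on the right) that these two actions coincide for central $z$, possibly up to inversion depending on conventions. Granting this, $z$ acts on the $\phi$-isotypic part of $H_c^\ast(Z_r)$ by the scalar $\phi(z)^{\pm1}$. Hence
\[
 \mathcal R_T^{G,F^2}(\phi)(zg)=\phi(z)^{\pm1}\,\mathcal R_T^{G,F^2}(\phi)(g)
 \qquad (g\in G^{F^2}).
\]
Only the $F$-fixed central elements are used in the next step.

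Next I reindex the sum. For $z\in Z^\natural\subset G^F$, the map $g\mapsto zg$ is a bijection of $G^F$. Therefore
\[
 \sum_{g\in G^F}\mathcal R(\phi)(g)=\sum_{g\in G^F}\mathcal R(\phi)(zg)=\phi(z)^{\pm1}\sum_{g\in G^F}\mathcal R(\phi)(g).
\]
Choosing $z\in Z^\natural$ with $\phi(z)\neq1$, which exists by hypothesis, forces the sum to vanish, so $m_{T,\phi}=0$.

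The main obstacle is the first step: checking that the centre acts compatibly on the deep level variety $Z_r$, that is, that left and right multiplication by a central $z\in T^{F^2}$ agree on $Z_r$. If this is awkward to extract from \cite{Nie_24}, I would fall back on the sheaf side. Central translation $g\mapsto zg$ on $G$ lifts to $X$ via $(y,xI)\mapsto(zy,xI)$. This is compatible with $\beta$ composed with translation by $\bar z$ on $\bar T$. Hence $t_z^*\mathfrak{Ind}^G_T\mathcal L\cong\mathcal L_z\otimes\mathfrak{Ind}^G_T\mathcal L$, and the characteristic function satisfies $\chi(zg)=\phi(z)\chi(g)$. Combined with \Cref{LC} over $F^2$, this gives the same identity for $\mathcal R(\phi)$ when $q$ is large.
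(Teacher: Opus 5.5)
Your proposal is correct and follows the paper's argument: since $z\in Z(G)^F\cap T$ is central, left and right multiplication by $z$ on the defining variety coincide, so $z$ acts on the $\phi$-isotypic cohomology by the scalar $\phi(z)^{\pm1}\neq1$. The paper concludes from this that the $G^F$-invariants of each cohomology group vanish, because $z\in G^F$. Your reindexing $g\mapsto zg$ of the character sum is the same observation at the level of characters, and your sheaf-theoretic fallback is not needed.
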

\begin{proof}
The left and right actions of $z\in Z(G)^F\cap T$ on the defining
variety agree. Thus $z$ acts by $\phi(z)$ on each cohomology group
in the $\phi$-isotypic part. If $\phi(z)\ne1$, each space of
$G^F$-invariants is zero, and so is its alternating multiplicity.
\end{proof}

\bibliography{bib_ADLV}{}
\bibliographystyle{amsalpha}
\end{document}